\newcommand{\1}{\mbox{1}\hspace{-0.25em}\mbox{l}}
\newtheorem*{theorema}{Theorem A}
\newtheorem*{theoremb}{Theorem B}
\newtheorem*{theoremc}{Theorem C}
\newtheorem{prop}{Proposition}[section]
\newtheorem{lemma}[prop]{Lemma}
\newtheorem{remark}[prop]{Remark}
\newtheorem{corollary}[prop]{Corollary}
\theoremstyle{definition}
\newtheorem{definition}[prop]{Definition}
\numberwithin{equation}{section}
\begin{document}

\author{ Hiroki Takahasi and Shintaro Suzuki}

\address{Keio Institute of Pure and Applied Sciences (KiPAS), Department of Mathematics,
Keio University, Yokohama,
223-8522, JAPAN} 
\email{hiroki@math.keio.ac.jp}
\email{shin-suzuki@math.keio.ac.jp}
%\urladdr{\texttt{http://www.math.keio.ac.jp/~hiroki/}}

\subjclass[2020]{37D25, 37D35, 37H05}
\thanks{{\it Keywords}: random dynamical system; stationary measure; thermodynamic formalism; large deviations; equidistribution}
%\date{\today}

\title[Distribution of
cycles for random dynamical systems]{Distribution of 
cycles for\\one-dimensional random dynamical systems}

\begin{abstract}
We consider an independently identically distributed random dynamical system generated by finitely many, non-uniformly expanding Markov interval maps with a finite number of branches.
Assuming a topologically mixing condition and the uniqueness of equilibrium state for the associated skew product map, we 
%for almost every sample, 
establish a samplewise (quenched) almost-sure level-2 weighted equidistribution of
``random cycles'', with respect to a natural 
stationary measure as the periods of the cycles tend to infinity. This result implies an analogue of Bowen's theorem on periodic orbits of topologically mixing Axiom~A diffeomorphisms. % in random setup. 
We also prove another almost-sure convergence theorem, as well as an averaged (annealed) theorem that is related to semigroup actions. %Our method of proof is purely deterministic, relying on
%the property that the canonical skew product map has a symbolic representation by the full shift over finite alphabet.
%built on the full shift over infinite alphabet.  
%We deduce the equidistribution from large deviations on the skew product map. 
%We illustrate 
%test 
%the power of our results with the random $\beta$-expansion of real numbers, 
%by drawing a formula for %the mean relative frequencies of digits and 
%the average symmetric mean of digits in the %series
%expansion. 
We apply our results to the random $\beta$-expansion of real numbers,
and obtain almost-sure convergences of average digital quantities in random $\beta$-expansions of random cycles that do not follow from %standard
the %usual 
application of the ergodic theorems of Birkhoff or Kakutani.
Our main results are applicable to random dynamical systems generated by finitely many maps with common neutral fixed points.
\end{abstract}
\maketitle

\tableofcontents

\section{Introduction}
One leading idea in the qualitative understanding of deterministic dynamical systems is 
to use collections of periodic orbits 
to structure the dynamics. This idea traces back to 
Poincar\'e \cite{Poi92}, and has been supported
by Bowen  \cite{Bow71,Bow74} who proved that periodic orbits 
   of topologically mixing Axiom~A diffeomorphisms equidistribute with respect
   to the measure of maximal entropy.
%Deterministic dynamical systems are iterations of the same map, and periodic orbits play an extremely important role. The idea, which traces back to Poincar\'e \cite{Poi92}, is to use collections of periodic orbits in order to structure the dynamics. 
%This idea has been supported by Bowen  \cite{Bow71,Bow74} who proved that periodic orbits 
 %  of topologically mixing Axiom~A diffeomorphisms equidistribute with respect
  % to the measure of maximal entropy.
%     One can even consider periodic orbits weighted with H\"older continuous potentials.
%For Axiom A flows, strong analogies with general results of the number theory were
 %  pursued by Parry and Pollicott \cite{ParPol83,ParPol90} which yield a ``prime orbit theorem'' as well
  % as an equidistribution of closed orbits of the flows.
   The importance of periodic orbits in descriptions of ergodic properties of natural
   invariant probability measures has long been recognized in the physics literature, see e.g., \cite{Cvi88,GOY88}.

Deterministic dynamical systems are iterations of the same map, whereas
random dynamical systems
are compositions of different maps chosen at random.
Therefore, for the latter
it is not apparent how periodic orbits should be defined, or what should play the role of periodic orbits.
%in deterministic setup. 
For random subshifts of finite type, Kifer constructed 
a certain substitute for periodic orbits
\cite[Appendix]{Kif00}. In there, 
he raised a conjecture on a random Livschitz theorem, with a view that 
``periodic orbits'' in random setup should play an important role too, as in deterministic dynamical systems.
%However, no substantial progress has been made in this direction.
% Despite its potential importance, 
%The notion of periodic orbits for random dynamical systems, 
%has not yet received an adequate deal of attention.

 This paper attempts to shed some light in the  direction that pursues the importance of ``periodic orbits'' in random setup. 
 %We consider independently identically distributed (i.i.d.) random dynamical systems generated by finitely many maps of the interval with Markov structures. 
 %As a substitute for periodic orbits we introduce {\it random cycles},
 %and establish several convergence theorems on distributions of weighted random cycles.
 %For almost every sample, we prove a level-2 weighted equidistribution of
 %random cycles with respect to stationary measures  %as their periods tend to infinity (Theorem~A). This implies 
 %As a corollary we obtain a random analogue
 %of Bowen's theorem \cite{Bow71,Bow74}.
% We also prove an averaging theorem over all samples (Theorem~B) that 
 %is related to semigroup actions, and another almost-sure convergence theorem (Theorem~C).
%We apply our results to the random $\beta$-transformation \cite{DK03},
%and obtain a formula for an average symmetric mean of digits in the expansion.
%\subsection{Random cycles}
%Throughout this paper,
%Let $N\geq2$ be an integer, and let $T_1,\ldots,T_N$ be 
%piecewise differentiable maps
%from a compact interval $X$ to itself.
       We are concerned with an independently identically distributed (i.i.d.) random dynamical system generated by finitely many piecewise differentiable maps $T_1,\ldots,T_N$ $(N\geq2)$ of a compact interval $X$ in which  the map $T_i$ is chosen with positive probability $p_i$ at each step.
More precisely, we are concerned with a probability space $(\Omega,m_p)$ that is the infinite product of $(\{1,\ldots,N\},p)$,
%To formalize this, let
%i.e.,
where $\Omega=\{1,\ldots,N\}^{\mathbb N}$ is the sample space and %is the space of samples and
%The topology on $\Omega$ is the product topology of the discrete topology on $\{1,\ldots,N\}$.
 $m_p$ is the Bernoulli measure 
%the probability measure on $\Omega$ 
determined by the $N$-dimensional positive probability vector
 $p=(p_1,\ldots,p_N)$.
 %with $\prod_{i=1}^N p_i\neq0$.
   %the identities
%\[m_p\{\omega=(\omega_j)_{j=1}^\infty\in\Omega\colon\omega_j=a_j\text{ for all }1\leq j\leq n\}=
%\prod_{i=1}^Np_i^{\#\{1\leq j\leq n\colon a_j=p_i\}},\] for all $n\geq1$ and all $a_1\cdots a_n\in\{1,\ldots,N\}^n$. 
%Hereafter,
%for a set $A$ and an integer $n\geq1$, $A^n$ denotes the set of words of $A$ with length $n$.
%We use the symbol $\#$ to denote the cardinality of a set.
For each sample 
$\omega\in\Omega$ 
we consider a random composition \[T_\omega^n=T_{\omega_n}\circ T_{\omega_{n-1}}\circ \cdots\circ T_{\omega_1}\quad (n=1,2,\ldots),\] and write
$T_\omega^0$ for the identity map on $X$.
Note that $T_\omega^n$ depends only on the first $n$ symbols of $\omega$.
 Put ${\rm Fix}(T_\omega^n)=\{x\in X\colon T_\omega^n(x)=x\}.$
 %For each $\omega\in\Omega$ and $n\geq1$,
%${\rm Fix}(T_\omega^n)$ is a non-empty finite set.
By a {\it random cycle}
we mean an element of the set \[\bigcup_{\omega\in\Omega}\bigcup_{n=1}^\infty{\rm Fix}(T_\omega^n).\]
%Note that $x\in{\rm Fix}(T_\omega^n)$ does not imply $T_\omega^{2n}(x)=x$.

In the special case $T_1=T_2=\cdots= T_N$,
random cycles are nothing but periodic points of 
the deterministic dynamical system generated by the iteration of $T_1$.
%The case where $T_2,\ldots,T_N$ are small perturbations of $T_1$ may be viewed as a random perturbation scheme of $T_1$ with i.i.d. noise.
% Random cycles in the random perturbation scheme as above may be close to what experimentalists observe in numerical computations of periodic points of the deterministic system $T_1$.
In general, 
random cycles in ${\rm Fix}(T_\omega^n)$ are considered to be natural substitutes for periodic points of period $n$ for deterministic dynamical systems. 
Random cycles were used in \cite{Buz02,Rue90} for defining dynamical zeta functions in random setup.
A natural question is whether random cycles really carry relevant information of the dynamics.
A negative result is due to Buzzi \cite{Buz02}, who showed that a dynamical zeta function defined with random cycles of certain random matrices cannot be extended beyond its disk of holomorphy, almost surely.
 Buzzi's result might imply that 
random cycles were not so important.
All our results in this paper %assert the contrary,
%on the other hand, 
%we give positive results supporting 
support the importance of random cycles.

\subsection{Statements of main results}
%\textcolor{red}{(remove)The convergence of a sequence $(m_n)_{n=1}^\infty$
%in the space $\mathcal M(\mathcal X)$
%to a measure $m$ in this topology means that $\lim_{n\to\infty}\int\varphi dm_n=\int\varphi dm$ holds for any bounded continuous function $\varphi\colon\mathcal X\to\mathbb R$.
% We write this convergence as $\lim_{n\to\infty}m_n=m$.}

 A {\it Markov map on $X$} of class $C^{1+\tau}$, $0<\tau\leq1$
 is a map $T\colon \bigcup_{a\in \mathcal A(0)} J(a)\to X$ where
   $\mathcal A(0)$ is a finite subset of the set $\mathbb N=\{1,2,\ldots\}$ of positive integers, %  with $\#\mathscr{A}\geq2$, 
   and 
   $(J(a))_{a\in \mathcal A(0)}$ 
   %is a collection of pairwise disjoint subintervals of $X$ 
   is a partition of $X$ into pairwise disjoint subintervals
   such that: 
   
   \begin{itemize}
%   \item[$\circ$] $X= \bigcup_{a\in \mathscr A}J_a$;
   \item[$\circ$] for each $a\in \mathcal A(0)$, 
    $T|_{J(a)}$ extends to
a $C^{1+\tau}$ diffeomorphism on ${\rm cl}(J(a))$;

\item[$\circ$] if $a,b\in\mathcal A(0)$ and $T(J(a))\cap{\rm int}(J(b))\neq\emptyset$, then ${\rm cl}(T(J(a)))\supset J(b)$,
\end{itemize}
where ${\rm int}(\cdot)$ and ${\rm cl}(\cdot)$ denote the interior and closure operations respectively.
 We call $(J(a))_{a\in \mathcal A(0)}$ a {\it Markov partition} for $T$. 
 The derivatives of $T$ at the boundaries of the elements of its Markov partition are the appropriate one-sided derivatives: $T'(x)=(T|_{J(a)})'(x)$ for $x\in J(a)\cap\partial J(a)$ and the same for higher order ones.
%For  $x\in X$
%    we denote by $T'(x)$ the first-order derivative
%of $T|_{J(a)}$ at $x$ where %$a\in\mathscr{A}$ is such that 
%$x\in J(a)$.
   We say $T$ is
    {\it non-uniformly expanding} if $|T'(x)|>1$ holds all $x\in X$ but at most finitely many points.
    %there exists $\gamma\geq1$ such that 
    % $|T'(x)|\geq\gamma$ for all $x\in X$, and 
 %$|T'(x)|=1$ for at most finitely many $x\in X$. 
  If $\inf_{x\in X}|T'(x)|>1$, we say $T$ is
    {\it uniformly expanding}. 
    
    A deterministic dynamical system generated by
    iterations of a single uniformly expanding Markov map $T$ is an archetypal %, qualitatively describable 
    model of chaotic dynamical systems that is qualitatively describable %well-understood 
    through the thermodynamic formalism \cite{Bow75,Rue04,Sin72}. 
    The strategy is to ``code'' the system %into a shift space over finite alphabet with exponentially decaying interactions, 
    by
    following its orbits over the Markov partition. This defines a one-dimensional spin system with an exponentially decaying interaction, and one can construct dynamically relevant invariant measures and study their properties borrowing ideas from equilibrium statistical mechanics.
    On distributions of periodic points,
    it follows from \cite{Bow71,Bow74} that
    if $T$ is topologically mixing, then 
   % \begin{equation}\label{bowen}\lim_{n\to\infty}\frac{1}{Z_n}\sum_{x\in{\rm Fix}(T^n)}|(T^n)'x|^{-1}\delta_{x}^{n}=\lambda,\end{equation} 
\begin{equation}\label{bowen}\lim_{n\to\infty}\frac{1}{Z_n}\sum_{x\in{\rm Fix}(T^n)}|(T^n)'x|^{-1}\frac{1}{n}\sum_{k=0}^{n-1}\varphi(T^k(x))=\int \varphi d\lambda,\end{equation} 
    for any continuous function $\varphi\colon X\to\mathbb R$,
    where $T^n$ denotes the $n$ iteration of $T$, 
    $(T^n)'x=\prod_{k=0}^{n-1}T'(T^k(x))$, $Z_n>0$ the normalizing constant,
and $\lambda$ the unique invariant Borel probability measure that is absolutely continuous with respect to the Lebesgue measure. 
%From Birkhoff's ergodic theorem, 
% $\lim_{n\to\infty}\delta_x^{n}=\lambda$ holds
% for $\lambda$-almost every $x\in X$.
 Examples of non-uniformly expanding Markov maps in our mind which are not uniformly expanding are those with neutral fixed points, which naturally arise in number theory, or are considered as a %greatly simplified 
 simple model of intermittency,
 %\textcolor{green}{in fluid turbulence}, 
 see \cite{FPT89,I89,LSV99,PomMan80,Sch75,Tha83} for example.

% We will deal with sequences of probability measures in several different spaces.
For a topological space $\mathcal X$, let $\mathcal M(\mathcal X)$
denote the space of Borel probability measures on $\mathcal X$ endowed with the weak* topology. 
For $x\in \mathcal X$ let $\delta_x\in\mathcal M(\mathcal X)$ denote the unit point mass at $x$.
  %  In numerical computations of periodic orbits of deterministic dynamical systems,
 %this sort of convergence is usually taken for granted among 
% physicists, see \cite{Cvi88,GOY88} for example
Our first main result is stated as follows, under the assumptions
formulated in Section~\ref{markov}
with $\lambda_p$ being the stationary measure there, see Definitions~\ref{def1} and \ref{def2}.
For $x\in X$, $\omega\in\Omega$ and $n\geq1$
%The object of our interest is {\it a random cycle measure}
%\[
%\xi^\omega_n=\frac{1}{Z_{\omega,n}}\sum_{x\in{\rm Fix}(T^n_\omega)}|(T_{\omega}^n)'x|^{-1}\delta_{x}^{\omega,n},\]
%$(T_\omega^n)'x=\prod_{i=1}^{n}T_{\omega_i}'(T_\omega^{i-1}(x))$,
 let
\[\delta_x^{\omega,n}=\frac{1}{n}\sum_{k=0}^{n-1}\delta_{T_\omega^k(x)},\]
%denote the empirical measure $(1/n)\sum_{k=0}^{n-1}\delta_{T_\omega^k(x)}$,
and write $(T_\omega^n)'x=\prod_{k=0}^{n-1}T_{\omega_{k+1}}'(T_\omega^k(x)).$

% We deduce this convergence in $\mathcal M(X)$ from the convergence in 
 % $\mathcal M(\mathcal M(X))$, which contains more information as developed below.
%Our first result is stated as follows.
%We also introduce {\it a random cycle measure} $\xi^\omega_n\in\mathcal M(X)$ by
%\[\nu^\omega_n=\frac{1}{Z_{\omega,n}}\sum_{x\in{\rm Fix}(T^n_\omega)}|(T_{\omega}^n)'x|^{-1}\delta_{x}^{\omega,n}.\]

\begin{theorema}
[Almost-sure level-2 weighted equidistribution of random cycles I]
%Let $T_1,\ldots,T_N$ be non-uniformly expanding Markov maps
%and let $p=(p_1,\ldots,p_N)$ be a positive probability vector 
%satisfying (A1) (A2) (A3) (A4).
Let $T_1,\ldots,T_N$ be non-uniformly expanding Markov maps on $X$ generating a nice, topologically mixing skew product Markov map. 
If the uniqueness of equilibrium state holds
 for an $N$-dimensional probability vector $p$, then
for $m_p$-almost every sample $\omega\in\Omega$
and any continuous function $\tilde \varphi\colon\mathcal M(X)\to\mathbb R$ 
we have
\[\lim_{n\to\infty}\frac{1}{Z_{\omega,n}}\sum_{x\in{\rm Fix}(T_\omega^n)}|(T^n_\omega)'x|^{-1}
\tilde \varphi(\delta_x^{\omega,n})=\tilde \varphi(\lambda_p),\]
where $Z_{\omega,n}>0$ denotes the normalizing constant.
\end{theorema}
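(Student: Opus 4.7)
The plan is to reformulate the statement in terms of the weighted empirical measure
\[
\mu_{\omega,n}:=\frac{1}{Z_{\omega,n}}\sum_{x\in{\rm Fix}(T_\omega^n)}|(T_\omega^n)'x|^{-1}\delta_{\delta_x^{\omega,n}}\in\mathcal M(\mathcal M(X)).
\]
The asserted convergence is equivalent to $\mu_{\omega,n}\to\delta_{\lambda_p}$ in the weak$^*$ topology on $\mathcal M(\mathcal M(X))$ for $m_p$-a.e.\ $\omega$. Since $\mathcal M(X)$ is compact, this reduces, via Portmanteau, to showing that for every continuous $g\colon X\to\mathbb R$ and every $\varepsilon>0$,
\[
\mu_{\omega,n}\bigl(\{\nu\in\mathcal M(X)\colon|{\textstyle\int} g\,d\nu-{\textstyle\int} g\,d\lambda_p|>\varepsilon\}\bigr)\longrightarrow 0.
\]
Thus the task becomes a quenched weighted weak law of large numbers for the Birkhoff averages $\int g\,d\delta_x^{\omega,n}=\tfrac1n S_ng(x)$ under the weights $|(T_\omega^n)'x|^{-1}/Z_{\omega,n}$.

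The main tool is the thermodynamic formalism for the skew product $F(\omega,x)=(\sigma\omega,T_{\omega_1}x)$. Using the Markov partition and bounded distortion, the weighted sums over ${\rm Fix}(T_\omega^n)$ are, up to multiplicative constants, comparable to traces of the random Ruelle operator tilted by the test function. Concretely, for a H\"older $g$ and small $\alpha\in\mathbb R$, I would first establish the quenched pressure limit
\[
\lim_{n\to\infty}\frac 1n\log\sum_{x\in{\rm Fix}(T_\omega^n)}|(T_\omega^n)'x|^{-1}e^{\alpha S_ng(x)}=P(\alpha g)\quad m_p\text{-a.s.,}
\]
where $P$ is the topological pressure of $-\log|T'|+\alpha g$ attached to the probability vector $p$. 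The topologically mixing hypothesis supplies the spectral control of the random Ruelle operator needed to pass from the annealed to the quenched limit, and uniqueness of the equilibrium state at $\alpha=0$ translates, via the standard convex-analytic argument, into differentiability of $\alpha\mapsto P(\alpha g)$ at the origin with $P'(0)=\int g\,d\lambda_p$.

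A G\"artner-Ellis argument then upgrades this to a quenched large deviation upper bound for the weighted distribution of the Birkhoff averages, with rate function strictly positive away from $\int g\,d\lambda_p$; this is precisely the required concentration. To conclude for a general continuous $\tilde\varphi$, I would approximate $\tilde\varphi$ uniformly on the compact set $\mathcal M(X)$ by functions of finitely many integrals against members of a countable separating family of H\"older test functions, and invoke the concentration for each such test function.

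The main obstacle is the \emph{quenched} character of the claim. The annealed pressure asymptotic is a comparatively direct consequence of the random thermodynamic formalism, but constructing a single $m_p$-full measure set of samples on which the convergence holds simultaneously for a countable dense family of pairs $(\alpha,g)$ requires a Borel-Cantelli-type control of exceptional samples combined with Lipschitz continuity of the limit in $\alpha$. Layered on top of this is the delicate distortion analysis near the neutral fixed points of the $T_i$, which must be absorbed into the ``nice'' hypothesis on the skew product from the setup of Section~\ref{markov}; with that control available, the remaining steps follow the standard Bowen-style template.
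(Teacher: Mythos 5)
Your overall architecture (reduce the level-2 statement to level-1 concentration for a countable family of test functions, prove a quenched Chernoff/G\"artner--Ellis upper bound, and get the location of the concentration point from differentiability of the pressure at $0$, which follows from uniqueness of the equilibrium state) is a legitimate alternative to the paper's route, which instead proves a full level-2 large deviations upper bound for empirical measures of periodic points of the skew product (Proposition~\ref{annealed-ldpup}, via Kifer and Gelfert--Wolf) and converts it to a samplewise bound (Proposition~\ref{ldpup-q}). Your identification of the annealed-to-quenched passage as the main obstacle, to be handled by Markov plus Borel--Cantelli, is exactly the trick the paper uses. However, as written your proposal has two genuine gaps.

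First, the mechanism you invoke for the pressure asymptotics --- ``spectral control of the random Ruelle operator'' supplied by topological mixing --- is not available in this setting: the maps are only non-uniformly expanding (neutral fixed points are explicitly allowed), so the random geometric potential is merely continuous, there is no spectral gap, and no quenched pressure \emph{limit} along periodic-orbit sums is at your disposal. What the Borel--Cantelli trick actually yields is only the one-sided bound $\limsup_n \frac1n\log\sum_{x\in{\rm Fix}(T_\omega^n)}|(T_\omega^n)'x|^{-1}e^{\alpha S_n g(x)}\le P(\phi_p+\alpha\, g\circ\Pi)$ a.s.\ (after comparing periodic points with cylinders via the $o(n)$ distortion estimate of Lemma~\ref{mild} and handling the at-most-two-to-one coding as in Lemma~\ref{relation}, which your sketch also omits together with hypothesis (A3)). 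Fortunately the upper bound is all the Chernoff argument needs, but you must replace the claimed two-sided quenched limit by this upper bound and derive it by soft distortion estimates rather than operator theory. Second, your normalized measure divides by $Z_{\omega,n}$, so an exponential upper bound on the tilted sum gives exponential decay of $\mu_{\omega,n}\{\,|\frac1nS_ng-\int g\,d\lambda_p|>\varepsilon\}$ only if you also have a sub-exponential \emph{lower} bound $Z_{\omega,n}\ge e^{-o(n)}$; this is precisely the content of the paper's Lemma~\ref{b-d} (proved using the Markov structure, the mean value theorem and (A2)), and it is nowhere in your proposal. Without it the division by $Z_{\omega,n}$ could in principle cancel the exponential decay and the concentration step collapses. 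With these two repairs the remaining steps (differentiability of $\alpha\mapsto P(\phi_p+\alpha\,g\circ\Pi)$ at $0$ from uniqueness of the equilibrium state and upper semicontinuity of entropy, Stone--Weierstrass approximation of $\tilde\varphi$ by polynomials in finitely many linear functionals, and intersection of countably many full-measure sets) do go through.
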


 For each $\omega\in\Omega$, define a Borel probability measure $\tilde{\xi}^\omega_n$ on
 $\mathcal M(X)$ by
\begin{equation}\label{random-c}\tilde{\xi}^\omega_n=\frac{1}{Z_{\omega,n}}\sum_{x\in{\rm Fix}(T^n_\omega)}|(T_{\omega}^n)'x|^{-1}\delta_{\delta_{x}^{\omega,n}}\quad(n=1,2,\ldots).\end{equation}
We also define a Borel probability measure $\xi^\omega_n$ 
  on $X$ by
  \begin{equation}\label{measure-xi}{\xi}^\omega_n=\frac{1}{Z_{\omega,n}}\sum_{x\in{\rm Fix}(T^n_\omega)}|(T_{\omega}^n)'x|^{-1}\delta_{x}^{\omega,n}\quad(n=1,2,\ldots).\end{equation}
We have referred to Theorem~A as ``level-2'', since
the convergence there is equivalent to the convergence of
$(\tilde{\xi}^\omega_n)_{n=1}^\infty$
in the weak* topology 
to the unit point mass at $\lambda_p$ as $n\to\infty$.
%Applying Theorem~A to
%a continuous function $\mu\in\mathcal M(X)\mapsto\int f d\mu$ with $f\colon X\to\mathbb R$ an arbitrary continuous function,
From Theorem~A, we obtain 
the convergence of
$(\xi^\omega_n)_{n=1}^\infty$
in the weak* topology 
to $\lambda_p$ as $n\to\infty$, namely
\begin{equation}\label{bow-random}\lim_{n\to\infty}\frac{1}{Z_{\omega,n}}\sum_{x\in{\rm Fix}(T^n_\omega)}|(T_{\omega}^n)'x|^{-1}\frac{1}{n}\sum_{k=0}^{n-1}\varphi(T_\omega^k(x))=\int \varphi d\lambda_p,\end{equation}
for any continuous function $\varphi\colon X\to\mathbb R$. In other words,
 weighted random cycles equidistribute with respect to $\lambda_p$ almost surely.
Compare \eqref{bowen} and \eqref{bow-random}.
%The almost-sure weak* convergence in \eqref{bow-random} is %interpreted as 
%which is an analogue of \eqref{bowen}.
By the Portmanteau theorem, for any Borel subset $A$ of $X$ satisfying $\lambda_p(\partial A)=0$
we have 
\[\label{recover}\lim_{n\to\infty}\frac{1}{Z_{\omega,n}}\sum_{x\in{\rm Fix}(T^n_\omega)}|(T_{\omega}^n)'x|^{-1}\frac{1}{n}\#\{0\leq k\leq n-1\colon T_\omega^k(x)\in A\}=\lambda_p(A).\]
%The equation \eqref{recover} 
This equation is essentially a representation of the stationary measure $\lambda_p$
in terms of random cycles.

Theorem~A is a samplewise result, and it is natural to ask what are averaged behaviors over samples.
The averaging by integration 
%based on Riesz's representation theorem 
yields one convergence result, see Corollary~\ref{thmb}.
%[Averaged level-2 weighted equidistribution of random cycles II]. 
Here we take a more intuitive way of sample averaging that ties in with semigroup actions in a particular case. Under the notation in Theorem~A, for $n\geq1$ and $\omega_1\cdots\omega_n\in \{1,\ldots,N\}^n$ we set
\[Q_{p}(\omega_1\cdots\omega_n)=\prod_{i=1}^Np_i^{\#\{1\leq k\leq n\colon \omega_k=i\}}.\]
\begin{theoremb}
[Averaged level-2 weighted equidistribution of random cycles I]
Let $T_1,\ldots,T_N$ and $p=(p_1,\ldots,p_N)$ be as in Theorem~A.
For
any continuous function $\tilde \varphi\colon\mathcal M(X)\to\mathbb R$ 
we have
\[\lim_{n\to\infty}\frac{1}{Z_{p,n}}\sum_{\omega_1\cdots \omega_n\in\{1,\ldots,N\}^n}Q_{p}(\omega_1\cdots\omega_n)\sum_{x\in{\rm Fix}(T_{\omega}^n)}|(T_{\omega}^n)'x|^{-1}\tilde \varphi(\delta_x^{\omega,n})=\tilde \varphi(\lambda_p),\]
where $Z_{p,n}>0$ denotes the normalizing constant.
\end{theoremb}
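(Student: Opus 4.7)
The plan is to deduce Theorem~B from Theorem~A by sample-averaging. The crucial identity is $Q_p(\omega_1\cdots\omega_n) = p_{\omega_1}p_{\omega_2}\cdots p_{\omega_n} = m_p([\omega_1\cdots\omega_n])$, the Bernoulli mass of the cylinder $[\omega_1\cdots\omega_n] = \{\omega'\in\Omega : \omega'_k = \omega_k\text{ for }1\leq k\leq n\}$. Since $T_\omega^n$, $\mathrm{Fix}(T_\omega^n)$, $(T_\omega^n)'$ and $\delta_x^{\omega,n}$ all depend on $\omega$ only through $\omega_1,\ldots,\omega_n$, the left-hand side of Theorem~B may be rewritten as the ratio of $m_p$-expectations
\[\frac{\int_\Omega F_n\,dm_p}{\int_\Omega Z_{\omega,n}\,dm_p}, \qquad F_n(\omega) := \sum_{x\in\mathrm{Fix}(T_\omega^n)}|(T_\omega^n)'x|^{-1}\tilde\varphi(\delta_x^{\omega,n}).\]
Setting $\Phi_n(\omega) := F_n(\omega)/Z_{\omega,n} = \int\tilde\varphi\,d\tilde\xi_n^\omega$ and $W_n(\omega) := Z_{\omega,n}/\int Z_{\omega,n}\,dm_p$, so that $W_n$ is a probability density on $(\Omega,m_p)$, this ratio equals $\int W_n\Phi_n\,dm_p$.

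Theorem~A yields $\Phi_n\to\tilde\varphi(\lambda_p)$ for $m_p$-a.e.\ $\omega$, and plainly $|\Phi_n|\leq\|\tilde\varphi\|_\infty$. For each $\varepsilon>0$, the set $B_n := \{|\Phi_n-\tilde\varphi(\lambda_p)|>\varepsilon\}$ satisfies $m_p(B_n)\to 0$ by bounded convergence, whence
\[\left|\int W_n\Phi_n\,dm_p-\tilde\varphi(\lambda_p)\right|\leq \varepsilon+2\|\tilde\varphi\|_\infty\int_{B_n}W_n\,dm_p.\]
Letting $\varepsilon\downarrow 0$, the conclusion of Theorem~B reduces to showing that the sequence $(W_n)_{n\geq1}$ is uniformly integrable on $(\Omega,m_p)$, for then $\int_{B_n}W_n\,dm_p\to 0$ as $n\to\infty$.

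The main obstacle is therefore the uniform integrability of the normalized partition functions $W_n$, which is a quantitative concentration property of $Z_{\omega,n}$ about its $m_p$-average. Under the non-uniform expansion and topological mixing assumptions, the random transfer operator theory developed earlier in the paper---via the uniqueness of the equilibrium state for the skew product---should provide sharp asymptotics of the form $Z_{\omega,n}\asymp C(\omega)\,e^{nP}$ for typical $\omega$, with $C(\omega)$ bounded in $L^{1+\delta}(m_p)$ for some $\delta>0$. This yields $\sup_n\int W_n^{1+\delta}\,dm_p<\infty$, hence uniform integrability by de la Vall\'ee Poussin. Alternatively, one may bypass Theorem~A and prove Theorem~B directly by observing that the left-hand side is a weighted sum over fixed points of the skew product $F(\omega,x) = (\sigma\omega,T_{\omega_1}(x))$ with respect to the potential $\phi(\omega,x) = \log p_{\omega_1} - \log|T_{\omega_1}'(x)|$; invoking the classical Bowen equidistribution theorem for $F$, whose equilibrium state projects to $\lambda_p$ on $X$, would then give the result. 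The quenched-to-annealed route sketched above is more economical provided the requisite moment bound on $W_n$ is in hand.
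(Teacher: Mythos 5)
Your reduction is structurally the right one: writing the left-hand side as $\int W_n\Phi_n\,dm_p$ with $W_n=Z_{\omega,n}\big/\int Z_{\omega',n}\,dm_p$ and $\Phi_n=\int\tilde\varphi\,d\tilde\xi_n^\omega$ isolates exactly the discrepancy between the per-sample normalization in Theorem~A and the global normalization $Z_{p,n}$, which is also the issue the paper has to control. The gap is in how you dispose of it. You assert uniform integrability of $(W_n)$ from ``sharp asymptotics $Z_{\omega,n}\asymp C(\omega)e^{nP}$ with $C\in L^{1+\delta}$ provided by the random transfer operator theory developed earlier in the paper''. No such theory or asymptotics appear in the paper, and under its standing hypotheses (non-uniformly expanding branches, potential merely continuous) they are not available. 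What the paper actually provides is the uniform sub-exponential bound $e^{-c_n}\leq Z_{\omega,n}\leq e^{c_n}$ with $c_n=o(n)$ (Lemma~\ref{b-d}); this gives only $W_n\leq e^{2c_n}$, which does not imply uniform integrability, and combined with your rate-free statement $m_p(B_n)\to0$ (bounded convergence from Theorem~A) it does not force $\int_{B_n}W_n\,dm_p\to0$. The paper closes precisely this hole by retaining the \emph{exponential} rate from the quenched large-deviation estimate: for a closed $\mathcal C\subset\mathcal M(X)$ not containing $\lambda_p$ one has $\tilde\xi_n^\omega(\mathcal C)\leq e^{-\alpha n}$ eventually, almost surely (proof of Proposition~\ref{abstract}), hence $\tilde\eta_{p,n}(\mathcal C)=\int\tilde\xi_n^\omega(\mathcal C)\,dm_p$ decays exponentially, and then the annealed measure of Theorem~B satisfies $\tilde\zeta_{p,n}(\mathcal C)\leq e^{2c_n}\tilde\eta_{p,n}(\mathcal C)\to0$. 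So what is missing in your write-up is not uniform integrability but an exponential (rather than merely almost-sure) quenched decay on closed sets avoiding the limit, which beats the $e^{2c_n}$ factor.

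Your alternative sketch--proving Theorem~B directly on the skew product--is actually closer to what the paper's machinery supports: the sum in Theorem~B is exactly the push-forward under $\Pi_{**}$ of the annealed measures $\tilde\mu_n$ in \eqref{mutilde}, and Proposition~\ref{annealed-ldpup} together with (A4) yields $\tilde\mu_n\to\delta_{m_p\otimes\lambda_p}$ by the argument of Proposition~\ref{abstract}. But ``the classical Bowen equidistribution theorem'' cannot simply be invoked for $R$: the system is not Axiom~A, the weights come from the potential $\phi$ rather than maximal entropy, $\phi$ is in general only continuous, and periodic points lying in the boundary set $E$ must be handled through (A3) and Lemma~\ref{relation}; the level-2 upper bounds of Kifer and Gelfert--Wolf are the substitute the paper uses for Bowen's theorem.
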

One known result relevant to Theorem~B is due to Carvalho, Rodrigues and Varandas \cite[Theorem~C]{CRV17}, in which they considered  semigroup actions generated by finitely many Ruelle-expanding maps, and showed that fixed points of semigroup elements of word length $n$ equidistribute with respect to the measure of maximal entropy as $n\to\infty$.
The random composition of $T_1,\ldots,T_N$ may be viewed as an action of a semigroup with $N$ generators: random cycles of period $n$ correspond to fixed points of semigroup elements of word length $n$.
In the equiprobability 
case $p_i=1/N$ for all $1\leq i\leq N$, the factors
$Q_{p}(\omega_1\cdots\omega_n)$ in the equation in Theorem~B are all equal to $1/N^n$ and they cancel out.
As a result, we obtain the following corollary.
%for $n\geq1$ we define $\tilde{\zeta}_{n}\in\mathcal M(\mathcal M(X))$ and $\zeta_n\in\mathcal M(X)$ by
%\[\tilde{\zeta}_{n}=\frac{1}{Z_{n}}\sum_{\omega_1\cdots \omega_n\in\{1,\ldots,N\}^n}\sum_{x\in{\rm Fix}(T_{\omega}^n)}|(T_{\omega}^n)'x|^{-1}\delta_{\delta_x^{\omega,n}},\]
%and
%\[\zeta_{n}=\frac{1}{Z_{n}}\sum_{\omega_1\cdots \omega_n\in\{1,\ldots,N\}^n}\sum_{x\in{\rm Fix}(T_{\omega}^n)}|(T_{\omega}^n)'x|^{-1}\delta_x^{\omega,n},\]
%where $Z_n$ are the normalizing constants.
%with the normalizing constants
%\[Z_{n}=\sum_{\omega_1\cdots \omega_n\in \{1,\ldots,N\}^n}Z_{\omega,n}.\]
%From Theorem~B we obtain the following.
\begin{corollary}[Level-2 weighted equidistribution of fixed points of semigroup actions]
Let $T_1,\ldots,T_N$ be as in Theorem~A. If the uniqueness of equilibrium state holds for the probability vector $p=(p_1,\ldots,p_N)$ with
$p_i=1/N$ for all $1\leq i\leq N$, then
for any continuous function $\tilde \varphi\colon\mathcal M(X)\to\mathbb R$ we have
\[\lim_{n\to\infty}\left(\sum_{\omega_1\cdots \omega_n\in \{1,\ldots,N\}^n}Z_{\omega,n}\right)^{-1}\sum_{\omega_1\cdots \omega_n\in\{1,\ldots,N\}^n}\sum_{x\in{\rm Fix}(T_{\omega}^n)}|(T_{\omega}^n)'x|^{-1}\tilde \varphi(\delta_x^{\omega,n})=\tilde \varphi(\lambda_p).\]
%Then we have
%$\lim_{n\to\infty}\tilde{\zeta}_{n}=\delta_{\lambda_p}$ and
 %$\lim_{n\to\infty}\zeta_{n}=\lambda_p$.
\end{corollary}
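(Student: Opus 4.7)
The plan is to derive this corollary directly from Theorem~B by specializing to the uniform vector $p = (1/N,\ldots,1/N)$, so the argument is essentially a one-step substitution. First I will verify that under this choice of $p$ the weight $Q_p$ is constant across words. For any $\omega_1\cdots\omega_n\in\{1,\ldots,N\}^n$, the multiplicities $\#\{1\leq k\leq n\colon\omega_k=i\}$ sum to $n$ as $i$ ranges over $\{1,\ldots,N\}$, and hence
$$Q_p(\omega_1\cdots\omega_n) = \prod_{i=1}^N\left(\frac{1}{N}\right)^{\#\{1\leq k\leq n\colon\omega_k=i\}} = \frac{1}{N^n},$$
independent of the particular word. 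This is the key observation already highlighted in the paragraph preceding the corollary.

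Next, I will pull the common factor $1/N^n$ out of the sum in the numerator of the expression in Theorem~B. Recalling that $Z_{\omega,n} = \sum_{x\in{\rm Fix}(T^n_\omega)}|(T^n_\omega)'x|^{-1}$ by definition, the same extraction applied to the normalizing constant $Z_{p,n}$ shows that
$$Z_{p,n} = \sum_{\omega_1\cdots\omega_n\in\{1,\ldots,N\}^n}Q_p(\omega_1\cdots\omega_n)\,Z_{\omega,n} = \frac{1}{N^n}\sum_{\omega_1\cdots\omega_n\in\{1,\ldots,N\}^n}Z_{\omega,n}.$$
The two factors of $N^{-n}$ in numerator and denominator cancel, producing exactly the expression appearing on the left-hand side of the corollary, whose limit is $\tilde\varphi(\lambda_p)$ by Theorem~B.

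There is no substantive obstacle here: the argument is pure bookkeeping, and the hypotheses required for Theorem~B (in particular, the uniqueness of equilibrium state at the uniform probability vector) are assumed directly in the statement of the corollary. The only point to be careful about is the consistency of the definition of $Z_{p,n}$ used in Theorem~B with the expression $\sum_{\omega_1\cdots\omega_n}Z_{\omega,n}$ appearing in the normalization of the corollary, which the above cancellation explicitly confirms.
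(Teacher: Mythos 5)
Your proposal is correct and is exactly the paper's own argument: the paper derives the corollary by remarking that in the equiprobability case all the factors $Q_p(\omega_1\cdots\omega_n)$ equal $1/N^n$ and cancel against the same factor in $Z_{p,n}=\sum_{\omega_1\cdots\omega_n}Q_p(\omega_1\cdots\omega_n)Z_{\omega,n}$, reducing the statement to Theorem~B. Your verification of this cancellation, including the identification of $Z_{p,n}$ with $N^{-n}\sum_{\omega_1\cdots\omega_n}Z_{\omega,n}$, matches the paper's reasoning.
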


The method of proofs of our main results
 %rely on 
 is a combination of the thermodynamic formalism and level-2
 large deviations 
 for deterministic dynamical systems.
Large deviation principles for random dynamical systems have already been formulated in \cite{Kif90,Kif08}. However, these results are concerned with occupational measures, %of the skew product transformation induced by the random dynamical system, 
while we deal with measures associated with random cycles.
Moreover, large deviations lower bounds are not necessary in proving 
our main results. 
 For a general account on large deviations, 
 including the precise meanings of the terms level-2 and level-1
 we refer the reader to the book of Ellis \cite[Chapter~1]{Ell85}.

Our random dynamical system is represented by a skew product map 
\begin{equation}\label{R-def}R\colon(\omega,x)\in\Lambda\mapsto(\theta\omega,T_{\omega_1}(x))\in\Lambda,\end{equation}
where $\Lambda=\Omega\times X$ and $\theta\colon\Omega\to\Omega$ denotes the left shift $(\theta \omega)_n=\omega_{n+1}$.
 Note that
$R^n(\omega,x)=(\theta^n\omega,T_{\omega}^n(x))$ for $n\geq0$.
%and $(\omega,x)\in \Lambda$.
The evolution of the second coordinate is of interest.
A key observation is that $x\in {\rm Fix}(T_\omega^n)$ 
%$T_\omega^n(x)=x$, but $T_\omega^{2n}(x)=x$ is false in general.
implies
$R^n(\omega',x)=(\omega',x)$, where
$\omega'\in\Omega$ is the repetition
of the word $\omega_1\cdots\omega_n$. For this reason, 
 properties of
random cycles may be analyzed through the analysis of  
the corresponding properties of periodic points of $R$.
Then, the level-2 large deviations upper bound for
suitably weighted periodic points of $R$ 
%weighted with the random geometric potential 
is available 
\cite{GelWol10,Kif94}. We convert this bound to 
 a samplewise almost-sure level-2 large deviations upper bound for weighted random cycles,
%on the stationary measure $\lambda_p$ 
using a trick inspired by the proof of the level-1 large deviations upper bound 
due to Aimino, Nicol and Vaienti \cite[Proposition~3.14]{ANV15}. 
%the proof of the samplewise  
%level-1 large deviations upper bound 
%to the setup of random cycles to obtain a samplewise level-2 upper bound.
  This bound allows us to show that any weak* accumulation point of
  the sequence of measures in \eqref{random-c} is 
supported on the set of equilibrium states. Therefore,
the assumption of uniqueness in Theorem~A yields the convergence of the sequence. 
Integrating the samplewise large deviations upper bound over all samples
yields the convergence in Theorem~B.

\subsection{Statements of main results of product form}
 Theorem~A may be used to analyze time averages 
 $(1/n)\sum_{k=0}^{n-1}\varphi(T_\omega^k(x))$ of 
 a function $\varphi\colon X\to\mathbb R$ 
 along the random orbit of a 
 random cycle $x\in{\rm Fix}(T_\omega^n)$. However, it
 does not provide useful information on time averages
 of functions which depend on both $\omega$ and $x$.
 %defined on the product space $\Lambda=\Omega\times X$.
Such functions with discontinuities naturally appear, for example, in random expansions of real numbers \cite{DK03,Da}.  
Therefore, the following version of Theorem~A has merit.
%and
%we obtain the following result. 
Let
$\delta_{(\omega,x)}^n$ denote the empirical measure $(1/n)\sum_{k=0}^{n-1}\delta_{R^k(\omega,x)}$.
For the definitions of acceptable functions on $\mathcal M(\Lambda)$
or $\Lambda$, see Definition~\ref{function-a}.

\begin{theoremc}
[Almost-sure level-2 weighted equidistribution of random cycles II]
Let $T_1,\ldots,T_N$ and $p$ be as in Theorem~A.
If $\tilde \varphi\colon\mathcal M(\Lambda)\to\mathbb R$ is an acceptable function, then
for $m_p$-almost every sample $\omega\in\Omega$
we have
\[
\lim_{n\to\infty}\frac{1}{Z_{\omega,n}}\sum_{x\in{\rm Fix}(T^n_\omega)}|(T_{\omega}^n)'x|^{-1}\tilde \varphi(\delta^n_{(\omega,x)})=
\tilde \varphi(m_p\otimes \lambda_p).\]
\end{theoremc}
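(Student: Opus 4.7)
The plan is to lift everything to the skew product $R$ on $\Lambda=\Omega\times X$ and transfer a level-2 large deviations upper bound for periodic points of $R$ to a samplewise statement about random cycles. The key observation, already exploited in the intended proof of Theorem~A, is that for any $x\in{\rm Fix}(T_\omega^n)$ the pair $(\omega',x)$ lies in ${\rm Fix}(R^n)$, where $\omega'=(\omega_1\cdots\omega_n)^\infty$; the weight $|(T_\omega^n)'x|^{-1}$ is precisely the Gibbs weight of this periodic orbit for the natural geometric potential $\phi(\omega,x)=-\log|T'_{\omega_1}(x)|$ on $\Lambda$. Moreover, $R^k(\omega,x)$ and $R^k(\omega',x)$ have the same fiber coordinate for $0\le k<n$, and their base coordinates $\theta^k\omega,\theta^k\omega'$ agree in their first $n-k$ symbols; hence $\delta^n_{(\omega,x)}$ and $\delta^n_{(\omega',x)}$ become asymptotically indistinguishable in the weak* topology of $\mathcal M(\Lambda)$, uniformly in $x$. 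It therefore suffices to prove the analogue of Theorem~A for the skew product: introducing
\[\tilde\eta_n^\omega=\frac{1}{Z_{\omega,n}}\sum_{x\in{\rm Fix}(T_\omega^n)}|(T_\omega^n)'x|^{-1}\delta_{\delta^n_{(\omega,x)}}\in\mathcal M(\mathcal M(\Lambda)),\]
the goal becomes $\tilde\eta_n^\omega\to\delta_{m_p\otimes\lambda_p}$ weak* for $m_p$-a.e.\ $\omega$.

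I would next invoke the level-2 large deviations upper bound for weighted periodic orbits of $R$ from \cite{GelWol10,Kif94}: for any closed $F\subset\mathcal M(\Lambda)$ with $m_p\otimes\lambda_p\notin F$ there exist $c=c(F)>0$ and $n_0$ such that
\[\sum_{\substack{(\sigma,y)\in{\rm Fix}(R^n)\\ \delta^n_{(\sigma,y)}\in F}}|(T_\sigma^n)'y|^{-1}\le e^{n(P-c)}\quad(n\ge n_0),\]
where $P$ is the topological pressure of $\phi$ for $R$. Averaging this bound against the Bernoulli weights $Q_p(\omega_1\cdots\omega_n)$ gives an annealed upper bound on the numerator of $\tilde\eta_n^\omega\{\mu\in F\}$; the trick of Aimino--Nicol--Vaienti \cite[Proposition~3.14]{ANV15}, combining Markov's inequality with Borel--Cantelli along $n\in\mathbb N$, upgrades this to a quenched almost-sure bound. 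Together with the exponential lower bound $Z_{\omega,n}\ge e^{n(P-o(1))}$ already needed for Theorem~A, one concludes $\tilde\eta_n^\omega\{\mu\in F\}\to0$ for $m_p$-a.e.\ $\omega$. Hence every weak* accumulation point of $(\tilde\eta_n^\omega)$ puts full mass on $\{m_p\otimes\lambda_p\}$ by uniqueness of the equilibrium state, and compactness of $\mathcal M(\mathcal M(\Lambda))$ yields the desired convergence.

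To deduce the stated limit with $\tilde\varphi$ merely acceptable (hence possibly discontinuous away from $m_p\otimes\lambda_p$; cf.\ Definition~\ref{function-a}), I would apply a Portmanteau-type argument at the single atom of the limit: since $\tilde\varphi$ is bounded and its discontinuity set has $\delta_{m_p\otimes\lambda_p}$-measure zero, $\int\tilde\varphi\,d\tilde\eta_n^\omega\to\tilde\varphi(m_p\otimes\lambda_p)$, which is exactly the claim. The main obstacle is the quenching step: converting the annealed level-2 bound into a samplewise one through Borel--Cantelli requires a genuinely exponential gap that survives summation over all words $\omega_1\cdots\omega_n$, which in turn demands that the ``nice'' skew-product hypothesis of Section~\ref{markov} delivers sufficient uniformity despite the presence of neutral fixed points. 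A secondary but real difficulty is the mismatch between $\delta^n_{(\omega,x)}$ and the periodic empirical $\delta^n_{(\omega',x)}$ appearing in the deterministic LDP; this discrepancy must be absorbed into the acceptable-function class for $\tilde\varphi$, which is precisely why ``acceptable'' is the right notion rather than mere continuity at $m_p\otimes\lambda_p$.
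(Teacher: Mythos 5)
Your first half—lifting to the skew product, invoking the level-2 upper bound of \cite{GelWol10,Kif94} for weighted periodic orbits of $R$, and quenching it with Markov's inequality, Borel--Cantelli and the sub-exponential bounds $e^{-c_n}\le Z_{\omega,n}\le e^{c_n}$—is essentially the paper's proof of Proposition~\ref{abstract} (the content of Theorem~A), and is fine as far as it goes. The genuine gap is in your final step, where you pass from the weak* convergence $\tilde\eta_n^\omega\to\delta_{m_p\otimes\lambda_p}$ to $\int\tilde\varphi\,d\tilde\eta_n^\omega\to\tilde\varphi(m_p\otimes\lambda_p)$ by asserting that the discontinuity set of $\tilde\varphi$ is $\delta_{m_p\otimes\lambda_p}$-null, i.e.\ that $\tilde\varphi$ is continuous at the point $m_p\otimes\lambda_p$ of $\mathcal M(\Lambda)$. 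Acceptability (Definition~\ref{function-a}) only says that $\nu\in\mathcal M(\Sigma)\mapsto\tilde\varphi(\nu\circ\pi^{-1})$ is continuous; it does not give continuity of $\tilde\varphi$ at $m_p\otimes\lambda_p$, nor boundedness or Borel measurability of $\tilde\varphi$ off the image $\pi_*(\mathcal M(\Sigma))$---and the empirical measures $\delta^n_{(\omega,x)}$ with non-periodic $\omega$ need not lie in that image, since (A3) only codes periodic points of $R$. This is exactly what the Remark after Definition~\ref{function-a} warns about: Theorem~C is \emph{not} a consequence of Proposition~\ref{abstract}. Your closing remark that the mismatch between $\delta^n_{(\omega,x)}$ and $\delta^n_{(\omega',x)}$ is ``absorbed into the acceptable-function class'' reflects the same confusion: weak* closeness of two empirical measures says nothing about closeness of their $\tilde\varphi$-values when $\tilde\varphi$ is discontinuous on $\mathcal M(\Lambda)$, and ``acceptable'' is not a continuity-at-$m_p\otimes\lambda_p$ condition but a continuity-after-coding condition.

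What the paper does instead is to avoid $\mathcal M(\Lambda)$ altogether at this stage and work at level 1 on the symbolic side, where acceptability actually bites: since $\nu\mapsto\tilde\varphi(\nu\circ\pi^{-1})$ is continuous on the compact space $\mathcal M(\Sigma)$, the contraction principle applied to the symbolic level-2 bound \eqref{ldupest} yields an annealed upper bound for the scalar observable $\tilde\varphi(\delta^n_{(\omega,x)})$ with rate function $I_{\Sigma,\tilde\varphi}$ (Proposition~\ref{level-1LDP}); this is then quenched exactly as in Proposition~\ref{ldpup-q}, using Lemma~\ref{b-d} (Lemma~\ref{level-1}); finally one notes that $I_{\Sigma,\tilde\varphi}(\alpha)=0$ iff $\alpha=\tilde\varphi(m_p\otimes\lambda_p)$ (via Lemma~\ref{minimizer}) and splits the weighted sum over ${\rm Fix}(T^n_\omega)$ into the set where $|\tilde\varphi(\delta^n_{(\omega,x)})-\tilde\varphi(m_p\otimes\lambda_p)|<\epsilon$ and its exponentially small complement. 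To salvage your route you would have to prove separately that every acceptable $\tilde\varphi$ is continuous at $m_p\otimes\lambda_p$ along arbitrary sequences in $\mathcal M(\Lambda)$ (and measurable and bounded where the empirical measures live); nothing in (A1)--(A4) guarantees this---note that $\lambda_p$ may even be atomic, as in Proposition~\ref{ex2-prop}(b)---so the level-1 large deviations argument is not an optional refinement but the missing substance of the proof.
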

%\textcolor{red}{Theorem A follows Theorem C?}
Since the function $\tilde \varphi$ is allowed to be nonlinear (as $\tilde \varphi$ in Theorem~A), from Theorem~C one can deduce the convergence of various time averages of functions relative to random cycles.
%which are different from the standard time average of a single function. 
We give three examples below,
inspired by the work of Olsen \cite[Section~1.1]{Ols03} on multifractal analysis.
\begin{corollary}[Almost-sure convergence of time averages relative to random cycles]\label{cor-10}
 Let $T_1,\ldots,T_N$ and $p$ be as in Theorem~A.
Then the following hold:

 \begin{itemize}
   
     \item[(a)]
if $\varphi\colon\Lambda\to\mathbb R$, $\psi\colon\Lambda\to\mathbb R$ are  acceptable  
  with $\inf \psi>0$,
  then for $m_p$-almost every $\omega\in\Omega$
we have
%the right-hand side of \eqref{ols-eq1} converges to $\int fd\lambda_p/\int gd\lambda_p$ as $n\to\infty$.
\[\lim_{n\to\infty}\frac{1}{Z_{\omega,n}}\sum_{x\in{\rm Fix}(T_\omega^n)}|(T^n_\omega)'(x)|^{-1}\frac{\sum_{k=0}^{n-1} \varphi(R^k(\omega,x))}{\sum_{k=0}^{n-1} \psi(R^k(\omega,x))}=\frac{\int \varphi d(m_p\otimes\lambda_p)}{\int \psi d(m_p\otimes\lambda_p)}.\]

\item[(b)]if $\varphi\colon\Lambda\to\mathbb R$, $\psi\colon\Lambda\to\mathbb R$ are  acceptable,
  then for $m_p$-almost every $\omega\in\Omega$
we have
\[\begin{split}\lim_{n\to\infty}\frac{1}{Z_{\omega,n}}\sum_{x\in{\rm Fix}(T_\omega^n)}|(T^n_\omega)'(x)|^{-1}\frac{1}{n^2}&\sum_{k=0}^{n-1} \varphi(R^k(\omega,x))\sum_{k=0}^{n-1} \psi(R^k(\omega,x))\\
&=\int \varphi d(m_p\otimes\lambda_p)\int \psi d(m_p\otimes\lambda_p).\end{split}\]

\item[(c)] %for any $d\in\mathbb N$, 
if $\pi_i\colon \Lambda\to\mathbb R$ $(i=1,2)$ are continuous and  $g\colon\mathbb R\to\mathbb R$ is bounded continuous, then for $m_p$-almost every $\omega\in\Omega$ we have
%, the right-hand side of \eqref{ols-eq2} converges to $\int h d(\lambda_p\circ \pi_1^{-1}*\lambda_p\circ \pi_2^{-1})$ as $n\to\infty$.
\[\begin{split}\lim_{n\to\infty}\frac{1}{Z_{\omega,n}}\sum_{x\in{\rm Fix}(T_\omega^n)}|(T^n_\omega)'(x)|^{-1}\frac{1}{n^2}&\sum_{k_1,k_2=0}^{n-1} g(\pi_1(R^{k_1}(\omega,x))+\pi_2(R^{k_2}(\omega,x)))\\
&=\int g d((m_p\otimes\lambda_p)\circ \pi_1^{-1}*(m_p\otimes\lambda_p)\circ \pi_2^{-1}),\end{split}\]
where $*$ denotes the convolution.
\end{itemize}
\end{corollary}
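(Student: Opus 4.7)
The plan is to reduce each item to a direct application of Theorem~C by recognizing the expression inside the weighted sum as $\tilde\varphi(\delta^n_{(\omega,x)})$ for a suitably chosen $\tilde\varphi\colon\mathcal M(\Lambda)\to\mathbb R$, and then verifying that $\tilde\varphi$ is acceptable in the sense of Definition~\ref{function-a}.

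For (a), writing $\int\varphi\, d\delta^n_{(\omega,x)}=\tfrac{1}{n}\sum_{k=0}^{n-1}\varphi(R^k(\omega,x))$ and similarly for $\psi$, the ratio in the statement equals $\tilde\varphi(\delta^n_{(\omega,x)})$ with
\[
\tilde\varphi(\mu)=\frac{\int\varphi\,d\mu}{\int\psi\,d\mu}.
\]
The hypothesis $\inf\psi>0$ guarantees $\int\psi\,d\mu\geq\inf\psi>0$ for every $\mu\in\mathcal M(\Lambda)$, so $\tilde\varphi$ is well-defined and bounded. Case (b) is handled analogously with $\tilde\varphi(\mu)=(\int\varphi\,d\mu)(\int\psi\,d\mu)$, since
\[
\frac{1}{n^2}\sum_{k=0}^{n-1}\varphi(R^k(\omega,x))\sum_{k=0}^{n-1}\psi(R^k(\omega,x))=\Bigl(\int\varphi\,d\delta^n_{(\omega,x)}\Bigr)\Bigl(\int\psi\,d\delta^n_{(\omega,x)}\Bigr).
\]
For (c), the double sum rewrites as an integral against the product $\delta^n_{(\omega,x)}\otimes\delta^n_{(\omega,x)}$, which by the definition of convolution of pushforwards equals $\int g\,d\bigl((\delta^n_{(\omega,x)}\circ\pi_1^{-1})*(\delta^n_{(\omega,x)}\circ\pi_2^{-1})\bigr)$; one therefore sets
\[
\tilde\varphi(\mu)=\int g\,d\bigl((\mu\circ\pi_1^{-1})*(\mu\circ\pi_2^{-1})\bigr).
\]
Since $g$ is bounded continuous and $\pi_1,\pi_2$ are continuous on $\Lambda$, the maps $\mu\mapsto\mu\circ\pi_i^{-1}$ are weak* continuous into $\mathcal M(\mathbb R)$, convolution is weak* continuous for bounded measures, and integration of the bounded continuous $g$ is weak* continuous; so $\tilde\varphi$ is weak* continuous on $\mathcal M(\Lambda)$, hence acceptable, and Theorem~C yields (c) at once.

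The main obstacle is verifying that the $\tilde\varphi$ constructed in (a) and (b) are acceptable, since here $\varphi,\psi$ are only acceptable (and possibly discontinuous) on $\Lambda$, so the linear functionals $\mu\mapsto\int\varphi\,d\mu$ need not be weak* continuous everywhere. The remedy is to show that the class of acceptable functionals on $\mathcal M(\Lambda)$ is closed under finite sums, finite products, and quotients with denominators uniformly bounded below by a positive constant, and that each linear functional with an acceptable integrand is itself acceptable. These closure properties should follow routinely from the requirements in Definition~\ref{function-a} (essentially continuity at $m_p\otimes\lambda_p$ together with uniform boundedness), and once established they deliver acceptability of $\tilde\varphi$ in (a) and (b); Theorem~C then completes all three arguments.
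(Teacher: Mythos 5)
Your proposal is correct and takes essentially the same approach as the paper, which likewise proves the corollary by applying Theorem~C to the three functionals $\mu\mapsto\int\varphi\,d\mu\big/\int\psi\,d\mu$, $\mu\mapsto\int\varphi\,d\mu\int\psi\,d\mu$ and $\mu\mapsto\int g\,d(\mu\circ\pi_1^{-1}*\mu\circ\pi_2^{-1})$. One small correction to your closing remark: acceptability in Definition~\ref{function-a} requires continuity of $\nu\mapsto\tilde\varphi(\nu\circ\pi^{-1})$ on all of $\mathcal M(\Sigma)$, not merely at $m_p\otimes\lambda_p$, but the closure properties you invoke still follow immediately because $\varphi\circ\pi$ and $\psi\circ\pi$ are continuous, hence bounded, on the compact space $\Sigma$, so $\nu\mapsto\int\varphi\circ\pi\,d\nu$ is weak* continuous everywhere and sums, products and uniformly-bounded-below quotients preserve this.
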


%\textcolor{red}{The number in the limit $\sim \int (1/n)\sum_{i=0}^{n-1}\psi(R^i(\omega,x))dx$}

A proof of Theorem~C also relies on large deviations for the skew product map $R$.
%By virtue of the contraction principle \cite{Ell85}, 
%the level-2 large deviations bound used in the proof of Theorem~A
%yields a corresponding level-1 large deviations bound.
%on a sequence of measures on the space $\Omega\times[0,1]$.
Using %again the proof of \cite[Proposition~3.14]{ANV15} to the setup of random cycles, 
the trick of conversion as in the outline of the proof of Theorem~A, we obtain a samplewise upper bound, from which
 the desired convergence follows.

\subsection{Organization of the paper}

%The samplewise level-2 upper bound is controlled by the rate function
%which controls the level-2 large deviation principle for the map $R$.
%\textcolor{red}{remove? For the rest of this paper, all Markov maps are assumed to be non-uniformly expanding.}
The rest of this paper consists of four sections.
In Section~2 we prove Theorems~A and B, and in Section~3 prove Theorem~C. 
In Section~4 we exhibit some examples of random dynamical systems 
to which our main results apply.
%for which (A1) (A2) (A4) hold.
This includes 
the one
 introduced in \cite{DK03} that
generates expansions of real numbers with non-integer bases. Markov maps generating this system are %non-fully branched, 
uniformly expanding. To this system
we apply Theorem~C and Corollary~\ref{cor-10}, and obtain almost-sure convergences of average digital quantities in the expansions of random cycles
that do not follow from the 
application of the ergodic theorems of Birkhoff or Kakutani.
Also, we show that our main results are applicable to non-uniformly expanding Markov maps with common neutral fixed points, such as those introduced in
\cite{LSV99}.

\subsection{List of  measures}\label{list}
In addition to the measures $\tilde\xi_\omega^n$ in \eqref{random-c} 
and $\xi_n^\omega$ in \eqref{measure-xi},
we will use quite a few measures along the way:
$\tilde\mu_n$ \eqref{mutilde};
$\tilde\nu_n$ \eqref{nutilde}; 
$\tilde\mu_n^\omega$ \eqref{tildemuomega};
$\tilde\eta_{p,n}$ \eqref{tildeetapn}; $\eta_{p,n}$ \eqref{etapn};
%$\tilde\zeta_{p,n}$ \eqref{zetapn};
$\mu_n$ \eqref{mun};
$\nu_n$ \eqref{nun};
  $\mu_{\omega,n}$ \eqref{muomegan}.
The tilder %$\ \tilde{} \ $ 
is attached to measures on $\mathcal M(\mathcal X)$  where
$\mathcal X=\Lambda$,
 $\Sigma$, $X$.

%Finally we discuss 
%connections of our results with 
%random dynamical systems generated by non-uniformly expanding Markov maps 
%with neutral fixed points.
%including the one
%introduced by Liverani et al.
%\cite{LSV99}. 
%For these systems, a verification of (A4) is an issue. 
%With a simple example, we illustrate how to verify (A4)
%using the thermodynamic formalism for countable Markov shifts.

\section{Establishing weighted equidistributions of random cycles}
This section is devoted to the proofs of Theorems~A and B.
In Section~\ref{markov}
 we fix notations which permeate this paper, and
state the assumptions in the theorems.
After preliminaries in Section~\ref{prelim}, we introduce
 in Section~\ref{section4-1} a sequence $(\tilde\mu_{n})_{n=1}^\infty$ 
of measures
on $\mathcal M(\Lambda)$, and deduce the level-2 large deviations upper bound for closed sets (Proposition~\ref{annealed-ldpup}). 
% from known results on the shift space. 
In Section~\ref{subexp}, we provide a key estimate that allows us to convert the large deviations bound obtained in Section~\ref{section4-1} to a samplewise one almost surely. In Section~\ref{samplewise-2} we introduce another sequence 
$(\tilde\mu_{n}^\omega)_{n=1}^\infty$
of measures on $\mathcal M(\Lambda)$ as a samplewise version of $(\tilde\mu_{n})_{n=1}^\infty$, and deduce a samplewise level-2 upper bound
(Proposition~\ref{ldpup-q}). 
%from the result in Section~\ref{section4-1} using the key estimate in Section~\ref{subexp}. 
In Section~\ref{sample-2}
we establish the almost sure convergence of $(\tilde\mu_{n}^\omega)_{n=1}^\infty$.
In Section~\ref{pf-thma} we project this convergence result down
to a sequence of measures on the space $\mathcal M(X)$, and complete the proof of Theorem~A.  We prove an averaged convergence result in Section~\ref{av-r}, and
  Theorem~B in Section~\ref{pf-thmc}.

\subsection{Setup and assumptions}\label{markov}
Let $T_1,\ldots,T_N$ be non-uniformly expanding Markov maps on $X$.
For each $1\leq i\leq N$
let $\mathcal A(i)$ be a finite subset of $\mathbb N$ and 
  let $(J(a))_{a\in \mathcal A(i)}$ be the Markov partition for $T_i$. 
After re-indexing we may suppose that $i,j\in\{1,\ldots,N\}$, $i\neq j$ implies $\mathcal A(i)\cap\mathcal A(j)=\emptyset$.
We endow the set $\mathcal A=\bigcup_{i=1}^N \mathcal A(i)$ with the discrete topology, and
 denote by 
 $\mathcal A^{\mathbb N}$ the topological space that is
 the one-sided Cartesian product of $\mathcal A$, namely 
 \[\mathcal A^{\mathbb N}=\{\underline{a}=(a_n)_{n=1}^\infty\colon a_n\in\mathcal A\text{ for all }n\geq1\}.\]
For each $a\in \mathcal A$, let $i(a)$ denote the unique integer in $ \{1,\ldots, N\}$ such that $a\in \mathcal A(i(a))$.
  The sets
\[\varDelta(a)=\{\omega\in\Omega\colon \omega_1=i(a)\}\times J(a)\quad(a\in\mathcal A)\]
%The sets $\Delta_a$, $a\in \mathcal A$ 
are pairwise disjoint subsets of $\Lambda$ (see FIGURE~1).
We will assume:
%the skew product map $R$ in \eqref{R-def} has a Markov property:
\begin{itemize}
    \item[(A1)] if $R(\varDelta(a))\cap{\rm int}(\varDelta(b))\neq\emptyset$ then ${\rm cl}(R(\varDelta(a)))\supset\varDelta(b)$.
\end{itemize}
If (A1) holds, we define a transition matrix $M=(m_{ab})_{a,b\in\mathcal A}$ by the rule $m_{ab}=1$ if $R(\varDelta(a))\cap{\rm int}(\varDelta(b))\neq\emptyset$ and $m_{ab}=0$ otherwise. Note that $M$ is an irreducible matrix.
We will assume:
\begin{itemize}
\item[(A2)] 
there exists an integer $n_0\geq1$ such that the matrix
$M^{n_0}$ has no zero entry. 
\end{itemize}
In other words, (A2) requires that
the Markov shift \[\Sigma=\{\underline{a}\in\mathcal A^{\mathbb N}\colon m_{a_na_{n+1}}=1\text{ for all }n\geq1\}\]
is topologically mixing. It also implies ${\rm Fix}(T_\omega^n)\neq\emptyset$ for $\omega\in\Omega$ and $n\geq n_0$. 

\begin{figure}
\begin{center}
\includegraphics[height=4cm,width=7cm]{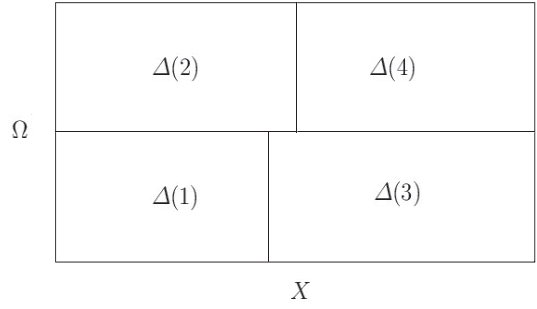}
\caption
{%The Markov partition for the skew product map $R$ in 
The partition of the space $\Lambda=\Omega\times X$ in the case $N=2$, $\mathcal A(1)=\{1,3\}$, $\mathcal A(2)=\{2,4\}$.}
 %\textcolor{red}{Unnecessary bar near $1/(\beta-1)$.}
\end{center}
\end{figure}

By an {\it admissible word} we mean
any finite word of $\mathcal A$ appearing in an element of $\Sigma$.
%A word $a_1\cdots a_n$ of $\mathcal A$ of word length $n$ is {\it admissible} if
%$n=1$, or else $m_{b_kb_{k+1}}=1$ holds for all $1\leq k\leq n-1$.
Let 
$\mathcal A^n(\Sigma)$ denote the set of admissible words of word length $n$. 
For $a_1\cdots a_n\in\mathcal A^n(\Sigma)$ we set
\begin{equation}\label{remind1}\varDelta(a_1\cdots a_n)=\bigcap_{k=1}^n R^{-k+1}(\varDelta(a_k)).\end{equation}
As in Lemma~\ref{u-decay} below, 
the set 
$\bigcap_{n=1}^\infty{\rm cl}(\varDelta(a_1\cdots a_n))$
is a singleton for any $\underline{a}\in\Sigma$.
Hence, we can define a coding map $\pi\colon\Sigma\to \Lambda$ by
 \[\pi(\underline{a})\in\bigcap_{n=1}^\infty{\rm cl}(\varDelta(a_1\cdots a_n)).\]
%that is continuous surjective, 
% one-to-one except on 
% the preimage of the countable set
%$A=\bigcup_{n=0}^\infty R^{-n}(\bigcup_{a\in \mathscr A}\partial\Delta_a)$ where it is at most two-to-one.
% We have $R\circ \pi=\pi\circ\sigma$ on $\Sigma\setminus\pi^{-1}A$, 
%and the restriction of $\pi$ to $\Sigma\setminus\pi^{-1}A$ has a continuous inverse.
%It follows that $\pi$ induces a measurable bijection between $\Sigma\setminus\pi^{-1}A$
%and $(\Lambda)\setminus A$.
%Define 
%\[\varphi\colon(\omega,x)\in\Lambda\mapsto\log p_{\omega_1}-\log|DT_{\omega_1}(x)|\in\mathbb R,\]
%where
 This map $\pi$ gives a semi-conjugacy between the skew product map $R$ and the left shift $\sigma$ on $\Sigma$.
 It is continuous 
 %\textcolor{red}{surjective (REMOVE)}, 
 and one-to-one except on 
 the set
$E=\bigcup_{n=0}^\infty R^{-n}(\bigcup_{a\in \mathcal A}\partial\varDelta(a))$ where it is at most two-to-one.
Let $\Pi\colon\Lambda\to X$ denote the natural projection.
The upper-half of the following diagram commutes:
\begin{equation}\label{diagram}
  \begin{CD}
     \Sigma @>{\sigma}>> \Sigma \\
  @V{\pi}VV    @VV{\pi}V \\
     \Lambda  @>R>>  \Lambda\\
      @V{\Pi}VV     \\
     X  @>T_\omega>>  X.\\
  \end{CD}\end{equation}
We will assume:
\begin{itemize}
    \item[(A3)] For any $(\omega,x)\in\Lambda$ such that $R^n(\omega,x)=(\omega,x)$
    for some $n\geq1$,  
     $(\omega,x)\in\pi(\Sigma)$.
\end{itemize}

\noindent 
\begin{definition}\label{def1}We say $T_1,\ldots,T_N$  generate {\it a nice, topologically mixing skew product Markov map} if (A1) (A2) (A3) hold.\end{definition}

Condition (A3) means that any periodic point of $R$ is coded,
which is a mild assumption. It holds
if all $T_1,\ldots T_N$
are fully branched. 
What is at issue in (A3) is 
the existence of periodic points of $R$ which are contained in $E$. 
Since the growth of the number of these periodic points is at most of polynomial order, 
one can show that contributions from these periodic points are negligible
in the case all $T_1,\ldots,T_N$ are uniformly expanding. In particular,
(A3) is not needed in this case, see Section~\ref{random-beta} for more details.

Suppose that $T_1,\ldots,T_N$ generate a nice, topologically mixing skew product Markov map.
The remaining assumption in Theorem~A is concerned with the $N$-dimensional positive probability vector $p=(p_1,\ldots,p_N)$ and  stated in terms of 
the thermodynamic formalism.
%One of them is
   Define {\it a random geometric potential} $\phi=\phi_p\colon\Sigma \to\mathbb R$ by \[\phi(\underline{a})
%=\varphi\circ\pi.\]
=\log p_{\omega_1}-\log|T_{\omega_1}'(x)|,\]
where $\pi(\underline{a})=(\omega,x)$.
%and
%$T_{\omega_1}'x$ denotes the first-order derivative of $T_{\omega_1}|_{J}$ at $x$, and $J$ denotes the element of $\mathscr{A}_{\omega_1}$ that contains $x$.
Let $\mathcal M(\Sigma,\sigma)$ denote the set of Borel probability measures on $\Sigma$
which are $\sigma$-invariant.
 For $\nu\in\mathcal M(\Sigma,\sigma)$ 
 let $h(\nu)$ denote the measure-theoretic entropy of $\nu$ relative to $\sigma$, and
   define a {\it free energy} $ F \colon \mathcal M(\Sigma,\sigma) \to \mathbb R$~by
\[ F(\nu)=h(\nu)+\int\phi d\nu.\]
%Since $\mathscr A$ is a finite set, the measure theoretic entropy 
%is upper semicontinuous on $\mathcal M(\Sigma,\sigma)$. Moreover, $\phi$ is continuous with respect to the shift metric on $\Sigma$ and $\mathcal M(\Sigma,\sigma)$ is a closed subset of $\mathcal M(\Sigma)$. Hence, $F$ is upper semicontinuous.
 % Define a {\it pressure}
%\[P(\phi)=\lim_{n\to\infty}\frac{1}{n}\log \sum_{b_1\cdots b_n\in\mathscr{A}^n}\sup_{[b_1\cdots b_n]} e^{S_n\phi}.\]
%This limit exists and is finite, and satisfies the variational priniple
%Since $\pi$ maps Borel sets to Borel sets, we can define a $\sigma$-invariant Borel probability measure on $\Sigma$ by \[\mu_p=(m_p\times\lambda_p)\circ\pi.\]
%From \cite[Theorem~1]{Pel84},
% $m_p\times\lambda_p$ is $R$-invariant and ergodic, and so $\mu_p$ is $\sigma$-invariant and ergodic.
 An {\it equilibrium state for the potential $\phi$} is a measure in $\mathcal M(\Sigma,\sigma)$ which maximizes the free energy. 
 A measure $\lambda\in\mathcal M(X)$ is {\it stationary} if 
$\lambda=\sum_{i=1}^Np_i\lambda\circ T_i^{-1}$. 
%\textcolor{red}{(remove) Stationary measures exist under mild assumptions} \textcolor{blue}{\cite{O83}(random topological dynamics on some compact metric space)}. 
\begin{definition}\label{def2}
We say {\it the uniqueness of equilibrium state holds for $p$} if 
%We require that
%the equilibrium state is unique, and there is a stationary measure that ties in with it:
\begin{itemize}
 \item[(A4)] 
there exists a stationary measure $\lambda_p$ on $X$ such that $(m_p\otimes\lambda_p)\circ\pi$
is the unique equilibrium state
for the random geometric potential $\phi_p$.
 \end{itemize}
 \end{definition}
 \noindent  
  We have $R\circ \pi=\pi\circ\sigma$ on $\Sigma\setminus\pi^{-1}(E)$, 
and the restriction of $\pi$ to $\Sigma\setminus\pi^{-1}(E)$ has a continuous inverse. In particular,
$\pi$ maps Borel sets to Borel sets. This ensures that $(m_p\otimes\lambda_p)\circ\pi$ is indeed a $\sigma$-invariant Borel probability measure, and (A4) makes sense.

\subsection{Preliminary results}\label{prelim}

%Let $f$ be a $C^1$ diffeomorphism from an interval $J$ onto its image. For $x,y\in J$ put
%\[D(f,x,y)=\log\frac{|Df(x)|}{|Df(y)|}.\]
%and define \[D(f,J)=\sup_{x,y\in J}D(f,x,y).\]
%\footnote{
%\textcolor{red}{Let $g\colon f\varDelta_a\to\varDelta_a$ denote the inverse branch of $f|_{\varDelta_a}$, namely $g(y)=(f|_{\varDelta_a})^{-1}(y)$.
%Transform the variable $y=f(x)$. Then
%$D_{yy}g(y)/D_yg(y)=D_{xx}f(x)/(D_xf(x))^2$, and so $D_y\log|D_yg(y)|=D_{xx}f(x)/(D_xf(x))^2.$
%Notice that $\log\frac{|D_xf(z_1)|}{|D_xf(z_2)|}=\log\frac{|D_yg(fz_2)|}{|D_yg(fz_1)|}$.
%To evaluate this, we use the mean value theorem for the function $y\mapsto\log |D_yg(y)|$.}} 
Let $T_1,\ldots,T_N$ be non-uniformly expanding Markov maps on $X$, and assume (A1).
We introduce further notations and prove some preliminary results.
For $a\in\mathcal A$
we set $f_a=T_{i(a)}|_{J(a)}.$
Compositions of these maps constitute branches of the random composition
$T_\omega^n$.
 For $n\geq2$ and an admissible word $a_1\cdots a_n\in \mathcal A^n(\Sigma)$, we define
\begin{equation}\label{remind2}f_{a_1\cdots a_n}=f_{a_n}\circ\cdots\circ f_{a_1}.\end{equation}
Let $f_{a_1\cdots a_n}^{-1}$ denote the inverse branch of the diffeomorphism $f_{a_1\cdots a_n}$, and define 
  \[J(a_1\cdots a_n)=f_{a_1\cdots a_n}^{-1}(X).\]

%\textcolor{red}{We will use the letter $C$ to denote positive constants which depend only on $T_1,\ldots,T_N$ and a positive probability vector $p$,
%the values of which differ from place to place.}
%Hence we allow expressions like $2C\leq C$.
We denote by ${\rm Leb}$ the restriction of the
Lebesgue measure on $\mathbb R$ to $X$, and write $|J|={\rm Leb}(J)$
for a subinterval $J$ of $X$.
For a diffeomorphism $f$ from a bounded interval $J$ onto its image and
 $x,y\in J$, we put
%\[D(f)=\sup_{x,y\in J}D(f,x,y)\quad\text{where}\quad 
\[D(f,x,y)=\left|\log|f'(x)|-\log|f'(y)|\right|.\]

%The next lemma is elementary.
\begin{lemma}\label{dist-basic}
If $T_1,\ldots,T_N$ are non-uniformly expanding and (A1) holds, there exists $C>0$ such that
for $n\geq1$, $a_1\cdots a_n\in \mathcal A^n(\Sigma)$ and $x,y\in J(a_1\cdots a_n)$ we have
\[D(f_{a_1\cdots a_n},x,y)\leq
C\sum_{j=1}^n
|f_{a_1\cdots a_j}(x)-f_{a_1\cdots a_j}(y)|^\tau.\]
\end{lemma}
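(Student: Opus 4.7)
The plan is to reduce the statement to a one-step estimate via the chain rule, apply uniform $\tau$-Hölder regularity of each $\log|f_a'|$, and then use non-uniform expansion to reorganize the sum in terms of forward image distances.

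First I would write out the chain rule for the product $f_{a_1\cdots a_n}=f_{a_n}\circ\cdots\circ f_{a_1}$, which yields, with the convention that $f_{a_1\cdots a_0}$ is the identity,
\[D(f_{a_1\cdots a_n},x,y)\leq\sum_{j=1}^n\bigl|\log|f_{a_j}'(f_{a_1\cdots a_{j-1}}(x))|-\log|f_{a_j}'(f_{a_1\cdots a_{j-1}}(y))|\bigr|.\]
For $x,y\in J(a_1\cdots a_n)$ and each $1\leq j\leq n$, the points $f_{a_1\cdots a_{j-1}}(x)$ and $f_{a_1\cdots a_{j-1}}(y)$ both lie in ${\rm cl}(J(a_j))$ by the definition of $J(a_1\cdots a_n)$ together with the Markov property. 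Since $f_a$ extends to a $C^{1+\tau}$ diffeomorphism on the compact set ${\rm cl}(J(a))$, the function $\log|f_a'|$ is $\tau$-Hölder there; because $\mathcal A$ is finite, there is a uniform Hölder constant $C_0$ valid for all $a\in\mathcal A$. Consequently each summand is bounded by $C_0|f_{a_1\cdots a_{j-1}}(x)-f_{a_1\cdots a_{j-1}}(y)|^\tau$.

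The second ingredient is a comparison of consecutive image distances. By the mean value theorem,
\[|f_{a_1\cdots a_j}(x)-f_{a_1\cdots a_j}(y)|=|f_{a_j}'(\xi)|\cdot|f_{a_1\cdots a_{j-1}}(x)-f_{a_1\cdots a_{j-1}}(y)|\]
for some $\xi$ in the interval between the two preimages. Here the hypothesis of non-uniform expansion enters crucially: if $|T_i'|$ were strictly less than one at some point, continuity would force the same on an open neighborhood, contradicting the hypothesis that $|T_i'|>1$ except at finitely many points. Hence $|T_i'|\geq1$ on all of $X$, so $|f_{a_j}'(\xi)|\geq1$. This gives the monotonicity $|f_{a_1\cdots a_{j-1}}(x)-f_{a_1\cdots a_{j-1}}(y)|\leq|f_{a_1\cdots a_j}(x)-f_{a_1\cdots a_j}(y)|$, and substituting back yields the claim with $C=C_0$.

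The only step which might be considered delicate is the one forcing $|T_i'|\geq1$ globally. In the uniformly expanding case, past distances decay geometrically, so the analogous estimate collapses into a single term involving only $|f_{a_1\cdots a_n}(x)-f_{a_1\cdots a_n}(y)|^\tau$; in the present non-uniformly expanding setting the best one can hope for is that past distances do not exceed future distances, which is exactly what the observation $|T_i'|\geq1$ provides. This weaker form suffices for the full sum bound stated in the lemma, and it is what allows the argument to accommodate maps with neutral fixed points.
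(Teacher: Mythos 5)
Your proof is correct, and it follows the same basic skeleton as the paper's (chain rule decomposition plus a uniform one-step H\"older estimate over the finite alphabet), but the key conversion step is handled differently. The paper's one-step estimate is stated directly against the image distance: for $x,y\in J(a)$ one has $D(f_a,x,y)\leq C|f_a(x)-f_a(y)|^\tau$, which follows merely from the fact that each $f_a$ is a $C^{1+\tau}$ diffeomorphism onto its image of a compact interval -- so $|f_a'|$ is bounded away from $0$ and $\infty$, $\log|f_a'|$ is $\tau$-H\"older, and $f_a^{-1}$ is Lipschitz, with constants uniform since $\mathcal A$ is finite -- and iterating this along the chain rule yields the sum $\sum_{j=1}^n|f_{a_1\cdots a_j}(x)-f_{a_1\cdots a_j}(y)|^\tau$ with no reference to expansion. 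You instead bound each chain-rule term by the distance at the $(j-1)$-st image and then shift the index using the observation that non-uniform expansion together with (one-sided) continuity of the branch derivatives forces $|T_i'|\geq 1$ everywhere, so successive image distances are monotone. That observation is valid (it does tacitly use that the partition elements are nondegenerate intervals, which is implicit in the setup), and it buys you the clean constant $C=C_0$; the price is that your argument genuinely uses the expansion hypothesis, whereas the paper's version would hold verbatim for arbitrary $C^{1+\tau}$ Markov maps with finitely many branches, the possible factor $\sup|(f_a^{-1})'|^\tau>1$ simply being absorbed into $C$.
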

\begin{proof}
Since $f_a'$ is $\tau$-H\"older continuous for each
$a\in \mathcal A$, there exists $C>0$ such that
for $a\in \mathcal A$ and $x,y\in J(a)$ we have
$D(f_{a_1},x,y)\leq C|f_a(x)-f_a(y)|^\tau,$ proving the desired inequality for $n=1$.
Iterating this argument yields the desired inequality for $n\geq2$.
\end{proof}

\begin{lemma}\label{u-decay}
If 
 $T_1,\ldots,T_N$ are non-uniformly expanding and (A1) holds, then
 \[\lim_{n\to\infty}\sup_{a_1\cdots a_n\in \mathcal A^n(\Sigma)}|J(a_1\cdots a_n)|=0.\]
In particular, for any $\underline{a}\in\Sigma$ the set 
$\bigcap_{n=1}^\infty{\rm cl}(\varDelta(a_1\cdots a_n))$
is a singleton.
\end{lemma}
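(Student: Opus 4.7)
The plan is to prove the pointwise statement $\lim_{n\to\infty}|J(a_1\cdots a_n)|=0$ for each fixed $\underline{a}\in\Sigma$, and then upgrade this to uniform convergence on $\Sigma$. Since $\underline{a}\mapsto|J(a_1\cdots a_n)|$ depends only on the first $n$ letters it is locally constant, hence continuous, on the compact shift space $\Sigma$; it is also non-increasing in $n$. Once its pointwise limit is zero, Dini's theorem delivers the desired uniform convergence. The ``in particular'' clause will follow by noting that ${\rm cl}(\varDelta(a_1\cdots a_n))$ factors as a cylinder in $\Omega$ (contracting to the single sample $(i(a_1),i(a_2),\ldots)$) times ${\rm cl}(J(a_1\cdots a_n))$.

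To prove the pointwise claim, fix $\underline{a}\in\Sigma$, set $J_n=J(a_1\cdots a_n)$, and argue by contradiction: suppose $I:=\bigcap_{n\geq1}{\rm cl}(J_n)$ has positive length $\epsilon$. Each $f_a$ extends to a $C^{1+\tau}$ diffeomorphism on ${\rm cl}(J(a))$, and since $|f_a'|>1$ on $J(a)$ outside at most finitely many points, continuity forces $|f_a'|\geq 1$ on ${\rm cl}(J(a))$, with equality only on a finite set. Let $K\subset X$ be the union of these exceptional sets over all $a\in\mathcal A$, of cardinality $\kappa$. The composition $f_{a_1\cdots a_n}$ is a diffeomorphism on ${\rm cl}(J_n)\supset I$ mapping onto $E_n:=f_{a_1\cdots a_n}(I)\subset{\rm cl}(J(a_{n+1}))$, and since $E_{n+1}=f_{a_{n+1}}(E_n)$ with $|f_{a_{n+1}}'|\geq 1$, the lengths $|E_n|$ form a non-decreasing sequence bounded by $|X|$, hence converge to some $L\in[\epsilon,|X|]$.

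The heart of the argument is an elementary area estimate:
\[
|E_{n+1}|-|E_n|=\int_{E_n}\bigl(|f_{a_{n+1}}'(y)|-1\bigr)dy\longrightarrow 0.
\]
Using finiteness of $\mathcal A$, continuity of each $f_b'$ on ${\rm cl}(J(b))$, and compactness, one has a uniform lower bound $|f_b'(y)|-1\geq c_\delta>0$ valid for every $b\in\mathcal A$ and every $y\in{\rm cl}(J(b))\setminus B(K,\delta)$. It follows that $|E_n\setminus B(K,\delta)|\leq(|E_{n+1}|-|E_n|)/c_\delta\to 0$, and hence $|E_n\cap B(K,\delta)|\to L$. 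But $|E_n\cap B(K,\delta)|\leq|B(K,\delta)|\leq 2\kappa\delta$, so $L\leq 2\kappa\delta$ for every $\delta>0$, forcing $L=0$ and contradicting $L\geq\epsilon$.

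The main delicate point I anticipate is verifying $|f_a'|\geq 1$ on all of ${\rm cl}(J(a))$, including at Markov boundaries, which rests on the one-sided derivative convention and the $C^{1+\tau}$ extension of each $f_a$; once this is in hand the remainder is a pure Lebesgue-measure argument and Lemma~\ref{dist-basic} is not needed for the present lemma. Passing from pointwise to uniform convergence via Dini is then painless, since $|J(a_1\cdots a_n)|$ is already locally constant in $\underline{a}$ on the compact space $\Sigma$.
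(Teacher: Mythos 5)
Your proof is correct, but it follows a genuinely different route from the paper's. The paper proves the uniform statement directly by contradiction: it first asserts that, since each $T_i$ expands off a finite set, for every $\delta>0$ there is $c(\delta)>1$ with $|T_i(J)|\geq c(\delta)|J|$ for every subinterval $J$ of a Markov element with $|J|\geq\delta$; if the supremum did not tend to $0$, one fixes a single interval $J$ with $|J|=\epsilon/2$ contained in infinitely many cylinders $J(\mathbf{a}_j)$ of lengths $n_j\to\infty$, and then $|f_{\mathbf{a}_j}(J)|\geq c(\epsilon/2)^{n_j}|J|$ eventually exceeds $|X|$, a contradiction; the singleton statement is then attributed to expansiveness of $\sigma$. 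You instead prove pointwise decay along each fixed $\underline{a}\in\Sigma$ via the identity $|E_{n+1}|-|E_n|=\int_{E_n}\bigl(|f_{a_{n+1}}'|-1\bigr)dy$, showing that a limiting interval of positive length would force its images to concentrate in arbitrarily small neighborhoods of the finite set $K$ of non-expanding points, which is impossible; uniformity then follows from Dini's theorem, since $\underline{a}\mapsto|J(a_1\cdots a_n)|$ is locally constant and non-increasing on the compact space $\Sigma$ (one should add the trivial remark that every admissible word of length $n$ is, after shifting, a prefix of an element of $\Sigma$, so the supremum over $\mathcal A^n(\Sigma)$ equals the supremum over $\Sigma$). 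Your handling of the delicate point is right: continuity of the $C^{1+\tau}$ extensions plus density of the expanding set gives $|f_a'|\geq1$ on ${\rm cl}(J(a))$ with equality only on a finite set, and your product description ${\rm cl}(\varDelta(a_1\cdots a_n))=\{\omega\colon\omega_k=i(a_k),\ 1\leq k\leq n\}\times{\rm cl}(J(a_1\cdots a_n))$ yields the singleton claim cleanly (arguably more explicitly than the paper's one-line appeal to expansiveness). As for what each approach buys: the paper's argument is shorter but leans on the unproved (though standard) existence of the uniform constant $c(\delta)$, whose verification would itself require a compactness or integral estimate of the same flavor as yours; your argument makes that estimate explicit and localizes the possible failure of expansion at the finite set $K$, at the modest cost of the extra pointwise-to-uniform reduction via Dini.
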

\begin{proof}
Since 
 $T_1,\ldots,T_N$ are non-uniformly expanding, for any $\delta>0$ there exists a constant $c=c(\delta)>1$ such that any $1\leq i\leq N$ and
 any subinterval $J$ of $X$ that is contained in an element of the Markov partition for $T_i$ satisfying
$|J|\geq\delta$,
we have $|T_i(J)|\geq c|J|$.
%$\mathcal A_i$

If the desired convergence does not hold, 
 there exist $\epsilon>0$ and a strictly increasing sequence
 $(n_j)_{j=1}^\infty$ in $\mathbb N$ and a sequence $(\bold a_j)_{j=1}^\infty$ of admissible words such that 
 %$a_1^j\cdots a_{n_j}^j% 
 $\bold a_j\in \mathcal {A}^{n_j}(\Sigma)$ and $|J(\bold a_j)|>\epsilon$ for each $j\geq1$.
Then there exists a subinterval $J$ of $X$ with $|J|=\epsilon/2$
such that $J\subset J(\bold a_j)$ holds for infinitely many $j$. We have
$|f_{\bold a_j}(J)|\leq |f_{\bold a_j}(
J(\bold a_j)  )|\leq|X|$, 
while the property of the constant $c(\epsilon/2)$ yields
 $|f_{\bold a_j}(J)|\geq c(\epsilon/2)^{n_j}|J|$, which grows to infinity, a contradiction. Hence the first assertion of Lemma~\ref{u-decay} holds. The second assertion follows from the first one and the expansiveness of the left shift $\sigma$.
 \end{proof}

 For each $n\geq1$ and $a_1\cdots a_n\in \mathcal A^n(\Sigma)$, we define
 an $n$-cylinder
 \[\![a_1\cdots a_n]=\{\underline{b}\in \Sigma\colon b_k=a_k\text{ for }
 1\leq k\leq n\}.\]
 We write
$S_n\phi$ for the sum 
$\sum_{k=0}^{n-1}\phi\circ \sigma^k$,  and introduce an $n$-th variation
\[D_n(\phi)=\sup_{a_1\cdots a_n\in \mathcal A^n(\Sigma)}\sup_{\underline{b},\underline{c}\in [a_1\cdots a_n]}S_n\phi(\underline{b})-S_n\phi(\underline{c}).\]
Note that \[
\exp S_n\phi(\underline{a})=Q_p(\omega_1\cdots\omega_n)|(T_{\omega}^n)'x|^{-1},\]
where
$\pi(\underline{a})=(\omega,x)$.

\begin{lemma}\label{mild}
If 
 $T_1,\ldots,T_N$ are non-uniformly expanding and (A1) holds, then $D_n(\phi)=o(n)$ $(n\to\infty)$.
\end{lemma}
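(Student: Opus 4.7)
The plan is to reduce the bound on $D_n(\phi)$ to a telescoping sum of diameters of cylinders, each of which is uniformly small by Lemma~\ref{u-decay}, and then invoke Ces\`aro summation.

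First I would compute $S_n\phi(\underline b)-S_n\phi(\underline c)$ explicitly for two sequences $\underline b,\underline c\in [a_1\cdots a_n]$. Using the commutativity $R\circ\pi=\pi\circ\sigma$ on $\Sigma\setminus\pi^{-1}(E)$ and the formula $\phi(\underline a)=\log p_{\omega_1}-\log|T_{\omega_1}'(x)|$ at $\pi(\underline a)=(\omega,x)$, I would observe that for each $0\le k\le n-1$ the first symbol of $\sigma^k\underline b$ and of $\sigma^k\underline c$ both equal $a_{k+1}$, so the $\log p$ contributions cancel. Writing $x_b=\Pi\pi(\underline b)$ and $x_c=\Pi\pi(\underline c)$ (both in $\mathrm{cl}(J(a_1\cdots a_n))$) and recalling the decomposition $\log|(f_{a_1\cdots a_n})'(x)|=\sum_{k=0}^{n-1}\log|f_{a_{k+1}}'(f_{a_1\cdots a_k}(x))|$, one then gets the identity
\[
S_n\phi(\underline b)-S_n\phi(\underline c)=\log|(f_{a_1\cdots a_n})'(x_c)|-\log|(f_{a_1\cdots a_n})'(x_b)|,
\]
i.e. $|S_n\phi(\underline b)-S_n\phi(\underline c)|=D(f_{a_1\cdots a_n},x_b,x_c)$.

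Next I would apply Lemma~\ref{dist-basic} to bound this quantity by $C\sum_{j=1}^n|f_{a_1\cdots a_j}(x_b)-f_{a_1\cdots a_j}(x_c)|^\tau$. The Markov property gives $f_{a_1\cdots a_j}(J(a_1\cdots a_n))\subset \mathrm{cl}(J(a_{j+1}\cdots a_n))$ for $1\le j\le n-1$, while for $j=n$ the image lies in $X$. Hence, setting $\varepsilon_k=\sup\{|J(a')|:a'\in\mathcal A^k(\Sigma)\}$,
\[
D_n(\phi)\ \le\ C\sum_{j=1}^{n-1}\varepsilon_{n-j}^{\tau}+C|X|^\tau\ =\ C\sum_{k=1}^{n-1}\varepsilon_k^{\tau}+C|X|^\tau.
\]
By Lemma~\ref{u-decay} we have $\varepsilon_k\to0$, hence $\varepsilon_k^\tau\to0$, and Ces\`aro's theorem yields $\sum_{k=1}^{n-1}\varepsilon_k^\tau=o(n)$, giving $D_n(\phi)=o(n)$.

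The only delicate point is the bookkeeping that ensures $f_{a_1\cdots a_j}(x_b)$ and $f_{a_1\cdots a_j}(x_c)$ genuinely lie in the same interval $\mathrm{cl}(J(a_{j+1}\cdots a_n))$; this is immediate from the Markov property (A1) and the definition \eqref{remind1}--\eqref{remind2}, but is the step one has to be careful about, especially when $x_b$ or $x_c$ lie on the boundary where one must pass to the extended $C^{1+\tau}$ diffeomorphism. No new estimate beyond Lemmas~\ref{dist-basic} and \ref{u-decay} is needed.
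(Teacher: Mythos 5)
Your proposal is correct and follows essentially the same route as the paper: reduce the variation $D_n(\phi)$ to the distortion $D(f_{a_1\cdots a_n},\cdot,\cdot)$ via Lemma~\ref{dist-basic}, bound the image increments by diameters of cylinders of lengths $n-j$ using the Markov structure, and conclude with Lemma~\ref{u-decay} and the Ces\`aro argument. The only cosmetic difference is your extra constant term $C|X|^\tau$ from the $j=n$ term (the paper instead keeps the $k=0$ term $|J(a_1\cdots a_n)|^\tau$), and your explicit verification that the $\log p$ contributions cancel, which the paper leaves implicit; neither affects the argument.
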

\begin{proof}
Let $a_1\cdots a_n\in \mathcal A^n(\Sigma)$.
By Lemma~\ref{dist-basic}, there exists $C>0$ such that for all $x,y\in J(a_1\cdots a_n)$ we have
\[\begin{split}D(f_{a_1\cdots a_n},x,y)&\leq C\left(|J(a_1\cdots a_n)|^\tau+\sum_{j=1}^{n-1}|f_{a_1\cdots a_j} (J(a_1\cdots a_n))|^\tau\right)\\
&\leq C\sum_{j=0}^{n-1}\sup_{a_1\cdots a_{n-j}\in \mathcal A^{n-j}(\Sigma)}|J(a_1\cdots a_{n-j})|^\tau.\end{split}\]
By Lemma~\ref{u-decay}, the last series is $o(n)$ and so Lemma~\ref{mild} holds.
\end{proof}

%Therefore, for any $R$-invariant measure $\mu$ with $\mu(A)=0$ there exists a $\sigma$-invariant measure $\mu'$ such that $\mu=\mu'\circ\pi^{-1}$ and the entropy of $\mu'$ relative to $\sigma$ is equal to $h(\mu)$.
%From \cite[Theorem~1]{Pel84},
% $m_p\times\lambda_p$ is $R$-invariant, % and ergodic, 
% and so $\mu_p$ is $\sigma$-invariant. % and ergodic.

Following the thermodynamic formalism \cite{Bow75,Rue04},
we introduce a pressure
\[P(\phi)=\lim_{n\to\infty}\frac{1}{n}\log \sum_{a_1\cdots a_n\in\mathcal A^n(\Sigma)}\sup_{[a_1\cdots a_n]} \exp S_n\phi.\]
%As is well-known 
%\cite{Bow75,Rue04},
This limit exists and is finite.
The variational principle holds:
\begin{equation}\label{VP}P(\phi)=\sup\left\{F(\nu)\colon\nu\in\mathcal M(\Sigma,\sigma)\right\}.\end{equation}
 \begin{lemma}\label{exist-equi}
 If 
 $T_1,\ldots,T_N$ are non-uniformly expanding and (A1) holds, then
$P(\phi)=0$. 
\end{lemma}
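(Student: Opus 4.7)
The plan is to exploit the explicit formula $\exp S_n\phi(\underline{a}) = Q_p(\omega_1\cdots\omega_n)|(T_\omega^n)'(x)|^{-1}$ (recorded just above Lemma~\ref{mild}) together with the subexponential distortion estimate $D_n(\phi) = o(n)$ from Lemma~\ref{mild}, and then to argue that the cylinder sums defining the pressure essentially telescope to $1$ because the intervals $J(a_1\cdots a_n)$ partition $X$ for each fixed symbolic sequence of the driving shift.

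More concretely, fix $n$ and $a_1\cdots a_n \in \mathcal A^n(\Sigma)$. The factor $Q_p(\omega_1\cdots\omega_n)$ depends only on $i(a_1),\ldots,i(a_n)$, so it is constant on $[a_1\cdots a_n]$. Since $f_{a_1\cdots a_n}$ is a diffeomorphism from $J(a_1\cdots a_n)$ onto $X$, the mean value theorem gives an interior point $x^{*}$ with $|f_{a_1\cdots a_n}'(x^{*})|^{-1} = |J(a_1\cdots a_n)|/|X|$. Combining this with the bound $\sup_{[a_1\cdots a_n]}\exp S_n\phi \le e^{D_n(\phi)}\inf_{[a_1\cdots a_n]}\exp S_n\phi$ yields
\[Q_p(\omega_1\cdots\omega_n)\,\frac{|J(a_1\cdots a_n)|}{|X|} \;\le\; \sup_{[a_1\cdots a_n]}\exp S_n\phi \;\le\; e^{D_n(\phi)}\,Q_p(\omega_1\cdots\omega_n)\,\frac{|J(a_1\cdots a_n)|}{|X|}.\]

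Next I would sum these inequalities over $a_1\cdots a_n \in \mathcal A^n(\Sigma)$. The key combinatorial observation is that for each fixed word $\omega_1\cdots\omega_n \in \{1,\ldots,N\}^n$, the collection $\{J(a_1\cdots a_n) : a_1\cdots a_n \in \mathcal A^n(\Sigma),\ i(a_k)=\omega_k\}$ is (up to boundary) a partition of $X$, because it is precisely the partition induced by pulling back the Markov partition through $f_{a_1}, f_{a_2 a_1}, \ldots, f_{a_n\cdots a_1}$. Hence
\[\sum_{a_1\cdots a_n \in \mathcal A^n(\Sigma)} Q_p(\omega_1\cdots\omega_n)\,\frac{|J(a_1\cdots a_n)|}{|X|} = \sum_{\omega_1\cdots\omega_n} Q_p(\omega_1\cdots\omega_n) = 1,\]
since $p$ is a probability vector. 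Therefore the partition function $Z_n := \sum_{a_1\cdots a_n} \sup_{[a_1\cdots a_n]}\exp S_n\phi$ satisfies $1 \le Z_n \le e^{D_n(\phi)}$, and taking $\frac{1}{n}\log$ and invoking Lemma~\ref{mild} forces $P(\phi) = 0$.

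The only real subtlety — which I would address carefully rather than sketch — is verifying the partition property: one must check that the Markov admissibility condition $m_{a_k a_{k+1}} = 1$ together with $i(a_k)=\omega_k$ captures \emph{exactly} the cylinders of the composition $T_{\omega_n}\circ\cdots\circ T_{\omega_1}$ relative to its natural Markov refinement, with no overlap on interiors and no gap. This follows from (A1) and the Markov property of each $T_i$, but it is the step that deserves explicit justification; everything else is bounded distortion plus a telescoping sum.
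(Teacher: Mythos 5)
Your route is in substance the paper's own: the paper compares $\sup_{[a_1\cdots a_n]}\exp S_n\phi$ with $(m_p\otimes{\rm Leb})(\varDelta(a_1\cdots a_n))=Q_p(\omega_1\cdots\omega_n)\,|J(a_1\cdots a_n)|$ up to a factor $e^{\pm D_n(\phi)}$, sums over all admissible words (the sum of these measures being $|X|$, which is exactly your partition observation), and then takes $\frac1n\log$ and invokes Lemma~\ref{mild}. So the decomposition, the use of bounded distortion, and the telescoping step you single out all coincide with the paper's argument.

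The one genuinely incorrect assertion is that $f_{a_1\cdots a_n}$ is a diffeomorphism from $J(a_1\cdots a_n)$ onto $X$. In the Markov setting of Section~\ref{markov} the maps are not assumed fully branched: the Markov condition only guarantees that the image of a partition element is a union of partition elements, and in fact $f_{a_1\cdots a_n}(J(a_1\cdots a_n))$ coincides, up to closure, with $T_{i(a_n)}(J(a_n))$, which may be a proper subinterval of $X$ (full branching is only imposed later, e.g.\ in (B1) of Section~\ref{verifya2}). Hence your mean-value identity $|f_{a_1\cdots a_n}'(x^{*})|^{-1}=|J(a_1\cdots a_n)|/|X|$ must be replaced by $|J(a_1\cdots a_n)|/|T_\omega^n(J(a_1\cdots a_n))|$, with the denominator lying between $c:=\inf_{a\in\mathcal A}|T_{i(a)}(J(a))|>0$ and $|X|$ --- precisely the constant exploited in the proof of Lemma~\ref{b-d}. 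This is a harmless repair: your two-sided bounds on the partition sum $Z_n$ only pick up multiplicative constants depending on $c$ and $|X|$ but not on $n$, so the conclusion $P(\phi)=0$ is unaffected. Apart from this point (and the routine remark, also left implicit in the paper, that the supremum over the symbolic cylinder $[a_1\cdots a_n]$ is comparable to the supremum of $Q_p|(T_\omega^n)'|^{-1}$ over $J(a_1\cdots a_n)$), your proof is correct and essentially identical to the paper's.
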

\begin{proof}
%By \cite[\textcolor{red}{Theorem~@}]{Pel84},
%(A2) implies that
%$\lambda_p\ll{\rm Leb}$, and
%there exist constants $0<c_1<c_2$ such that 
%$c_1\leq \frac{d\lambda_p}
%{d{\rm Leb}}\leq c_2$ holds for Lebesgue almost every %point in $X$. Hence,
For $n\geq1$ and $a_1\cdots a_n\in \mathcal A^n(\Sigma)$
we have
\[e^{-D_n(\phi)}\leq
\frac{(m_p\otimes{\rm Leb})(\varDelta(a_1\cdots a_n))}{
\sup_{[a_1\cdots a_n]} \exp S_n\phi}\leq e^{D_n(\phi)}.\]
%Since $\mu_p$ is a probability and $n$-cylinders are pairwise disjoint,
We have
$\sum_{a_1\cdots a_n\in \mathcal A^n(\Sigma)}(m_p\otimes{\rm Leb})(\varDelta(a_1\cdots a_n))= |X|$, 
and Lemma~\ref{mild} gives $D_n(\phi)=o(n)$. %$(n\to\infty)$ 
We rearrange the double inequalities and
sum the result over all $a_1\cdots a_n\in \mathcal A^n(\Sigma)$. Then taking logarithms, dividing by $n$ and letting $n\to\infty$ yields $P(\phi)=0$. 
\end{proof}

\subsection{Level-2 upper bound for the skew product map}\label{section4-1}
Define a Borel probability measure
$\tilde\mu_n$ on $\mathcal M(\Lambda)$ by
  \begin{equation}\label{mutilde}\tilde\mu_n=\frac{1}{Z_{p,n}}\sum_{(\omega,x)\in {\rm Fix}(R^n)}Q_p(\omega_1\cdots\omega_n)
  |(T_\omega^n)'x|^{-1}\delta_{
 \delta_{(\omega,x)}^n}\quad(n=1,2,\ldots),\end{equation}
 where ${\rm Fix}(R^n)=\{(\omega,x)\in\Lambda\colon R^n(\omega,x)=(\omega,x)\}$ and
 $\delta_{(\omega,x)}^n$ denotes the empirical measure $(1/n)\sum_{k=0}^{n-1}\delta_{R^k(\omega,x)}$ in $\mathcal M(\Lambda)$.
 We also define a Borel probability measure
$\tilde\nu_n$ on
 $\mathcal M(\Sigma)$ by
\begin{equation}\label{nutilde}\tilde\nu_n=\left(\sum_{\underline{a}\in{\rm Fix}(\sigma^n)} \exp S_n\phi(\underline{a})\right)^{-1}\sum_{\underline{a}\in{\rm Fix}(\sigma^n)} \exp S_n\phi(\underline{a})\delta_{\delta_{\underline{a}}^n}\quad(n=1,2,\ldots),\end{equation}
where ${\rm Fix}(\sigma^n)=\{\underline{a}\in\Sigma\colon\sigma^n\underline{a}=\underline{a}\}$ and $\delta_{\underline{a}}^n=(1/n)\sum_{k=0}^{n-1}\delta_{\sigma^k\underline{a}}$. 

We compare the two measures through the continuous map
 $\pi$.
The push-forward $\pi_*\colon \nu\in\mathcal M(\Sigma)\mapsto \nu\circ\pi^{-1}\in\mathcal M(\Lambda)$ is continuous. The measure $\tilde\nu_n\circ\pi_*^{-1}$ is almost isomorphic
to $\tilde\mu_n$, up to elements of ${\rm Fix}(R^n)$ contained in the boundary points of the sets $\varDelta(a)$.
%\textcolor{red}{(${\rm Fix}(R^n)$ may contain points which are not coded. They are on the boundary of $\varDelta(a)$ so finite.)}
More precisely, they are related as follows.
\begin{lemma}\label{relation}
Assume (A3).
For any $n\geq1$ and
 any Borel subset $\mathcal B$ of $\mathcal M(\Lambda)$, we have
$\tilde\mu_n(\mathcal B)\leq2\tilde\nu_n\circ\pi_*^{-1}(\mathcal B).$
\end{lemma}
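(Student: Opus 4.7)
My plan is to use the coding map $\pi$ to rewrite the sum over $\mathrm{Fix}(R^n)$ defining $\tilde\mu_n$ as a sum over $\mathrm{Fix}(\sigma^n)$ defining $\tilde\nu_n$, and then extract the factor $2$ from the fact that $\pi$ is at most two-to-one. The starting point is the semi-conjugacy $\pi\circ\sigma=R\circ\pi$, which extends by continuity from $\Sigma\setminus\pi^{-1}(E)$ to all of $\Sigma$, giving $\pi\circ\sigma^n=R^n\circ\pi$ globally. Hence $\pi(\mathrm{Fix}(\sigma^n))\subseteq\mathrm{Fix}(R^n)$, and for each $\underline a\in\mathrm{Fix}(\sigma^n)$ the push-forward identity $\pi_*\delta^n_{\underline a}=\delta^n_{\pi(\underline a)}$ holds. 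Combined with the observation $\exp S_n\phi(\underline a)=Q_p(\omega_1\cdots\omega_n)\,|(T^n_\omega)'x|^{-1}=:w(\pi(\underline a))$ recorded in Section~\ref{prelim}, grouping the terms of $\tilde\nu_n\circ\pi_*^{-1}(\mathcal B)$ by image produces
\[
\tilde\nu_n\circ\pi_*^{-1}(\mathcal B) = \frac{1}{\tilde Z_n}\sum_{(\omega,x)\in\mathrm{Fix}(R^n)} N(\omega,x)\,w(\omega,x)\,\mathbf{1}_{\mathcal B}(\delta^n_{(\omega,x)}),
\]
with $N(\omega,x):=|\pi^{-1}(\omega,x)\cap\mathrm{Fix}(\sigma^n)|\in\{0,1,2\}$ and $\tilde Z_n=\sum N w$.

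The heart of the proof is the claim $N(\omega,x)\geq 1$ for every $(\omega,x)\in\mathrm{Fix}(R^n)$. By (A3), $\pi^{-1}(\omega,x)\neq\emptyset$, and by the at-most-two-to-one property $|\pi^{-1}(\omega,x)|\leq 2$. Any $\underline b\in\pi^{-1}(\omega,x)$ satisfies $\sigma^n\underline b\in\pi^{-1}(\omega,x)$ via the extended semi-conjugacy, so $\sigma^n$ permutes this one- or two-element set. When $|\pi^{-1}(\omega,x)|=1$, or when $\sigma^n$ acts trivially on it, some preimage already sits in $\mathrm{Fix}(\sigma^n)$. The only problematic case is $\pi^{-1}(\omega,x)=\{\underline b,\underline b'\}$ with $\sigma^n$ swapping the two preimages; this forces $\underline b,\underline b'\in\mathrm{Fix}(\sigma^{2n})\setminus\mathrm{Fix}(\sigma^n)$ and places the entire $R$-orbit of $(\omega,x)$ on shared internal boundaries of the Markov cells with an alternating labeling. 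I would rule out this ``swap'' by combining (A3) applied to period $2n$ with the injectivity of $\pi$ off $E$ and Lemma~\ref{u-decay}, which together forbid the required alternation.

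Granted $1\leq N\leq 2$, the inequality is immediate: $\sum w\,\mathbf{1}_{\mathcal B}\leq\sum Nw\,\mathbf{1}_{\mathcal B}$ and $\tilde Z_n\leq 2Z_{p,n}$, so
\[
\tilde\mu_n(\mathcal B) = \frac{1}{Z_{p,n}}\sum w\,\mathbf{1}_{\mathcal B} \leq \frac{1}{Z_{p,n}}\sum N w\,\mathbf{1}_{\mathcal B} = \frac{\tilde Z_n}{Z_{p,n}}\,\tilde\nu_n\circ\pi_*^{-1}(\mathcal B) \leq 2\,\tilde\nu_n\circ\pi_*^{-1}(\mathcal B),
\]
where the sums are over $(\omega,x)\in\mathrm{Fix}(R^n)$ and the indicator is evaluated at $\delta^n_{(\omega,x)}$. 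The step I expect to be the main obstacle is excluding the swap sub-case: (A3) alone guarantees the existence of a symbolic preimage but not of a \emph{periodic} one of matching period, and so ruling out the swap genuinely uses the Markov/expanding geometry rather than just the measure-theoretic content of~(A3).
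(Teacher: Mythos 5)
Your counting scheme is the same one the paper uses: identify the weights via $\exp S_n\phi(\underline a)=Q_p(\omega_1\cdots\omega_n)|(T_\omega^n)'x|^{-1}$, group the points of ${\rm Fix}(\sigma^n)$ over their $\pi$-images, and use $1\le N(\omega,x)\le 2$ to get $\tilde Z_n\le 2Z_{p,n}$ and hence the factor $2$. Granted $1\le N\le 2$, your final chain of inequalities is correct, and the upper bound $N\le2$ is exactly the paper's appeal to $\pi$ being at most two-to-one.

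The genuine gap is the lower bound $N(\omega,x)\ge 1$, i.e.\ ${\rm Fix}(R^n)\subset\pi({\rm Fix}(\sigma^n))$. This is precisely what the paper's (very short) proof extracts directly from (A3) and then stops. You rightly note that (A3), read literally, only provides some code in $\pi^{-1}(\omega,x)$, and you isolate the ``swap'' configuration ($\pi^{-1}(\omega,x)=\{\underline b,\underline b'\}$ with $\sigma^n$ exchanging the two codes) as the only obstruction --- but you never rule it out; you only announce that you ``would'' do so using (A3) at period $2n$, injectivity of $\pi$ off $E$, and Lemma~\ref{u-decay}. None of these does the job as stated: applying (A3) to $(\omega,x)$ viewed as a fixed point of $R^{2n}$ returns only the membership $(\omega,x)\in\pi(\Sigma)$ that you already have; injectivity off $E$ gives no leverage, since a point with two codes necessarily lies over $E$; and Lemma~\ref{u-decay} controls diameters of cylinders, not the combinatorics of boundary codes. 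So the single step that carries the lemma is left unproved in your write-up, whereas the paper resolves it by reading (A3) as guaranteeing a periodic symbolic representative for every periodic point of $R$. A secondary soft spot: your assertion that $\pi\circ\sigma=R\circ\pi$ ``extends by continuity'' to all of $\Sigma$ is not automatic, because $R$ is not continuous at the cell boundaries; the paper states this identity on $\Sigma\setminus\pi^{-1}(E)$ (while declaring the coding diagram commutative), and your regrouping of $\tilde\nu_n\circ\pi_*^{-1}$, as well as the identity $\pi_*\delta^n_{\underline a}=\delta^n_{\pi(\underline a)}$, silently relies on it. Either invoke (A3) the way the authors do, or supply an actual argument excluding the swap; the sketch you give is not one.
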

\begin{proof}
Both measures are supported on finite sets.
If $\underline{a}\in\Sigma$, $(\omega,x)\in\Lambda$ and
$\pi(\underline{a})=(\omega,x)$, then we have $S_n\phi(\underline{a})=Q_p(\omega_1\cdots\omega_n)|(T_{\omega}^n)'x|^{-1}$.
  Since $\pi$ is at most two-to-one and ${\rm Fix}(R^n)\subset\pi({\rm Fix}(\sigma^n))$ from (A3), for each $(\omega,x)\in{\rm Fix}(R^n)$ there exists at least one and at most two points in $\Sigma$ which are mapped to $(\omega,x)$ by $\pi.$
  Hence the desired inequality holds.
  \end{proof}
 
 As a consequence, the large deviations upper bound for $(\tilde\nu_n)_{n=1}^\infty$ implies that for $(\tilde\mu_n)_{n=1}^\infty$.
%In order to deduce the level-2 large deviations upper bound for the sequence  $(\tilde{\mu}_n)_{n=1}^\infty$,
The former is controlled by
the function $ I_\Sigma \colon \mathcal M(\Sigma) \to (-\infty,\infty]$ given by
\begin{equation}\label{r-f} I_\Sigma(\nu)
=
\begin{cases}-F(\nu) &\text{ if $\nu\in\mathcal M(\Sigma,\sigma)$},\\
\infty&\text{ otherwise.}\end{cases}\end{equation}
%\textcolor{red}{$I\to I_\Sigma$, $I_{\Omega\tim
From the variational principle \eqref{VP} and
$P(\phi)=0$ in Lemma~\ref{exist-equi},
 $I_\Sigma$ is a non-negative function.
% The large deviations for  the sequence $(\tilde{\mu}_n)_{n=1}^\infty$ is 
%controlled by a function 
We define $I_{\Lambda}\colon \mathcal M(\Lambda)\to[0,\infty]$
by
\[I_{\Lambda}(\mu)=\inf\{I_\Sigma(\nu)\colon\nu\in\mathcal M(\Sigma), \nu\circ\pi^{-1}=\mu\}.\]
%where $\inf\emptyset=+\infty$.

 \begin{prop}\label{annealed-ldpup}
 Let
 $T_1,\ldots,T_N$ be non-uniformly expanding Markov maps on $X$ generating a nice, topologically mixing skew product Markov map. Then
 $I_{\Lambda}$ is lower semicontinuous and
for any closed subset $\mathcal C$ of $\mathcal M(\Lambda)$,
\[\limsup_{n\to\infty}\frac{1}{n}\log \tilde\mu_n(\mathcal C)\leq -\inf_{\mathcal C} I_{\Lambda}.
\]
\end{prop}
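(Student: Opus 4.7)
The plan is to reduce the bound for $\tilde\mu_n$ on $\mathcal M(\Lambda)$ to one for $\tilde\nu_n$ on $\mathcal M(\Sigma)$ via the push-forward $\pi_*$, then invoke known level-2 large deviations for weighted periodic points on topologically mixing subshifts of finite type, as cited in the paper from \cite{GelWol10,Kif94}. Lemma~\ref{relation} already does the bulk of the transfer at the level of measures, so the principal work is to verify the LDP upper bound for $\tilde\nu_n$ and to handle the contraction by $\pi_*$ cleanly.

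First I would establish the lower semicontinuity of $I_\Lambda$. Since $\mathcal M(\Sigma)$ is compact and $I_\Sigma$ is lower semicontinuous (as $-F$ restricted to the closed set $\mathcal M(\Sigma,\sigma)$, with $F$ upper semicontinuous by upper semicontinuity of entropy on a subshift of finite type and continuity of $\phi$, which follows from Lemma~\ref{mild}), the sublevel sets $\{I_\Sigma\leq\alpha\}$ are compact. Then $I_\Lambda=I_\Sigma\circ\pi_*^{-1}$ in the usual sense for continuous maps between compact metric spaces, and a standard contraction-type argument shows the infimum in the definition of $I_\Lambda(\mu)$ is attained and that $I_\Lambda$ is lower semicontinuous.

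Second, I would deduce the upper bound for $\tilde\nu_n$. The hypotheses are met: (A2) gives topological mixing of $(\Sigma,\sigma)$, Lemma~\ref{mild} gives the mild regularity $D_n(\phi)=o(n)$, and Lemma~\ref{exist-equi} gives $P(\phi)=0$. Under these conditions, the level-2 large deviations upper bound for measures supported on periodic points weighted by $\exp S_n\phi$ holds, namely
\[\limsup_{n\to\infty}\frac{1}{n}\log\tilde\nu_n(\mathcal C')\leq-\inf_{\mathcal C'}I_\Sigma\]
for every closed $\mathcal C'\subset\mathcal M(\Sigma)$. This is essentially the content of the results of Kifer \cite{Kif94} and Gelfert--Wolf \cite{GelWol10}, where the Hölder condition is used only to the extent that $D_n(\phi)=o(n)$.

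Third, I would pull this back to $\Lambda$. Given a closed set $\mathcal C\subset\mathcal M(\Lambda)$, its preimage $\pi_*^{-1}(\mathcal C)$ is closed in $\mathcal M(\Sigma)$ by continuity of $\pi_*$. Applying Lemma~\ref{relation} and then the upper bound for $\tilde\nu_n$ yields
\[\limsup_{n\to\infty}\frac{1}{n}\log\tilde\mu_n(\mathcal C)\leq\limsup_{n\to\infty}\frac{1}{n}\log\bigl(2\tilde\nu_n(\pi_*^{-1}(\mathcal C))\bigr)\leq-\inf_{\pi_*^{-1}(\mathcal C)}I_\Sigma=-\inf_{\mathcal C}I_\Lambda,\]
where the last equality is the definition of $I_\Lambda$. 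The main obstacle will be checking that the literature's level-2 upper bound for weighted periodic points truly applies under the weaker regularity $D_n(\phi)=o(n)$ (rather than Hölder regularity of $\phi$); one may have to inspect the proofs and recast them here, since the potential $\phi$ has logarithmic singularities coming from neutral fixed points and is only guaranteed to have $o(n)$ distortion bounds in our non-uniformly expanding setting.
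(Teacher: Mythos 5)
Your proposal is correct and follows essentially the same route as the paper's proof: lower semicontinuity of $I_\Sigma$ (upper semicontinuity of entropy via expansiveness, continuity of $\phi$, closedness of $\mathcal M(\Sigma,\sigma)$) pushed through the continuous map $\pi_*$, the level-2 upper bound for $\tilde\nu_n$ from \cite{Kif94,GelWol10}, and the transfer to $\tilde\mu_n$ via Lemma~\ref{relation} together with $\inf_{\pi_*^{-1}(\mathcal C)}I_\Sigma=\inf_{\mathcal C}I_\Lambda$. Your closing concern about regularity dissolves once one notes that $\phi$ is bounded and continuous on $\Sigma$ even in the non-uniformly expanding case (each branch derivative extends continuously and nonvanishingly to the closure of its domain, and $1$-cylinders are clopen), so the upper bound of \cite[Theorem~6]{GelWol10}, stated for merely continuous potentials, applies verbatim, which is precisely the paper's citation.
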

\begin{proof}
Since %$\Sigma$ is a full shift over finite alphabet, 
the left shift $\sigma$ is expansive and $\Sigma$ is compact,
 the measure-theoretic entropy 
is upper semicontinuous on $\mathcal M(\Sigma,\sigma)$. Moreover, $\phi$ is continuous with respect to the shift metric on $\Sigma$ and $\mathcal M(\Sigma,\sigma)$ is a closed subset of $\mathcal M(\Sigma)$. Hence, $I_\Sigma$ is  lower semicontinuous.
Since the coding map $\pi$ is continuous, $I_{\Lambda}$ is lower semicontinuous.

%The desired upper bound for closed sets follows from the corresponding upper bound for the sequence of measures associated with the shift space defined as follows.

By \cite[Theorem~2.1]{Kif94} when
 $\phi$ is H\"older continuous, and 
 by \cite[Theorem~6]{GelWol10} when  $\phi$ is merely continuous, 
for any closed subset $\mathcal C$ of $\mathcal M(\Sigma)$ we have the large deviations upper bound
\begin{equation}\label{ldupest}\limsup_{n\to\infty}\frac{1}{n}\log \tilde\nu_n(\mathcal C)\leq -\inf_{\mathcal C} I_\Sigma.
\end{equation}
 From this bound and Lemma~\ref{relation}, 
%\textcolor{red}{Since all the maps $T_1,\ldots,T_N$ are fully branched, ${\rm Fix}(R^n)$ and ${\rm Fix}(\sigma^n)$ are in bijection under $\pi$. Hence $\tilde{\eta}_n\circ\pi_*^{-1}=\tilde{\mu}_n$ holds.}
%If $\mathcal C\subset\mathcal M(\Omega\times[0,1])$ is a closed set,
% $\pi_*^{-1}(\mathcal C)$ is a closed subset of $\mathcal M(\Sigma)$.
 %the result of Kifer \cite{Kif94}, the large deviation principle holds for the sequence $(\tilde{\eta}_n)_{n=1}^\infty$, and the rate function is given by $I$. Hence,
%By the level-2 large deviations upper bound for closed sets (Pollicott, Yuri? and Gelfert and Wolf), we have
for any closed subset $\mathcal C$ of $\mathcal M(\Lambda)$ we obtain
\[\limsup_{n\to\infty}\frac{1}{n}\log \tilde\mu_n(\mathcal C)\leq\limsup_{n\to\infty}\frac{1}{n}\log \tilde\nu_n(\pi_*^{-1}(\mathcal C))\leq -\inf_{\pi_*^{-1}(\mathcal C)} I_\Sigma=-\inf_{\mathcal C} I_{\Lambda},
\]
as required.
\end{proof}
\subsection{Sub-exponential bound on the sum of derivatives}\label{subexp}
The following lemma gives a bound on the normalizing constant
\[Z_{\omega,n}=\sum_{x\in{\rm Fix}(T^n_\omega)}|(T_{\omega}^n)'x|^{-1},\]
which is a key needed to convert the level-2 upper bound in Proposition~\ref{annealed-ldpup} to samplewise ones.
 \begin{lemma}\label{b-d}
 If 
 $T_1,\ldots,T_N$ are non-uniformly expanding and (A1) (A2) hold, then
 there exists a sequence $(c_n)_{n=1}^\infty$ of positive reals  such that $c_n=o(n)$ as $n\to\infty$, and
for $\omega\in\Omega$ and $n\geq n_0$, we have
$e^{-c_n}\leq Z_{\omega,n}\leq e^{c_n}.$
\end{lemma}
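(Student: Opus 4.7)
I will bound $Z_{\omega,n}$ via a bounded-distortion comparison with total Lebesgue length of closing cylinders, handling the upper bound by a partition identity and the lower bound via the mixing assumption (A2). Enumerate fixed points of $T_\omega^n$ by symbolic itinerary: every $x^*\in\mathrm{Fix}(T_\omega^n)$ lies in a cylinder $J(a_1\cdots a_n)$ for an admissible word $a_1\cdots a_n\in\mathcal A^n(\Sigma)$ with $i(a_k)=\omega_k$, and by (A1) together with the non-uniform expansion of the composition $f_{a_1\cdots a_n}$, this cylinder contains a unique fixed point if and only if $m_{a_n a_1}=1$; denote this set of closing words by $\mathcal W_{\omega,n}$. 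Applying the mean value theorem to the diffeomorphism $f_{a_1\cdots a_n}\colon J(a_1\cdots a_n)\to T_{i(a_n)}(J(a_n))$, together with the distortion bound $D_n(\phi)=o(n)$ from Lemma~\ref{mild} and the estimates $m\leq|T_{i(a_n)}(J(a_n))|\leq|X|$ with $m:=\min_{a\in\mathcal A}|T_{i(a)}(J(a))|>0$, yields
\[
\frac{e^{-D_n(\phi)}}{|X|}\,|J(a_1\cdots a_n)|\;\leq\;|(T_\omega^n)'(x^*)|^{-1}\;\leq\;\frac{e^{D_n(\phi)}}{m}\,|J(a_1\cdots a_n)|.
\]

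\textbf{Upper bound.} Summing the right-hand inequality over $\mathcal W_{\omega,n}$ and dominating by the full sum over admissible length-$n$ words with $i$-structure matching $\omega$---which equals $|X|$ since these cylinders form a Lebesgue partition of $X$---gives $Z_{\omega,n}\leq(|X|/m)\,e^{D_n(\phi)}\leq e^{c_n}$ with $c_n:=D_n(\phi)+\log(|X|/m)=o(n)$.

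\textbf{Lower bound and main obstacle.} It remains to show $\sum_{\mathcal W_{\omega,n}}|J(a_1\cdots a_n)|\geq e^{-o(n)}$ uniformly in $\omega\in\Omega$ and $n\geq n_0$, since the left-hand inequality above then furnishes $Z_{\omega,n}\geq e^{-c_n}$. My strategy exploits (A2): for each admissible initial segment $a_1\cdots a_{n-n_0}$ with the prescribed $i$-structure---built greedily symbol-by-symbol using the fact that for any $a\in\mathcal A$ and any $j\in\{1,\dots,N\}$ there exists $b\in\mathcal A(j)$ with $m_{ab}=1$, because $T_{i(a)}(J(a))$ is a nonempty subset of $X=\bigsqcup_{b\in\mathcal A(j)}J(b)$ and by (A1) must entirely contain some such $J(b)$---I attach a closing block of length $n_0$ furnished by the assumption that $M^{n_0}$ has no zero entry. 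The principal obstacle is that (A2) guarantees topological mixing in $\Sigma$ without constraining the $i$-labels of intermediate symbols along an $n_0$-connecting path, so matching the closing block with both $\omega|_{[n-n_0+1,n]}$ and the loop condition $m_{a_n a_1}=1$ requires a careful combinatorial argument; the number of $i$-label patterns for the closing block is bounded by $|\mathcal A|^{n_0}$ and hence $O(1)$ in $n$, and I plan to absorb the resulting loss of Lebesgue mass (uniform in $\omega$ and $n$) into the constant part of $c_n$. Combining both directions gives $c_n=D_n(\phi)+O(1)=o(n)$ by Lemma~\ref{mild}, completing the argument.
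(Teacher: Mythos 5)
Your upper bound is correct and is essentially the paper's own argument: the mean value theorem on each branch, the distortion bound $D_n(\phi)=o(n)$ from Lemma~\ref{mild}, the fact that each matching cylinder contains at most one point of ${\rm Fix}(T_\omega^n)$, and the Lebesgue partition identity $\sum|J(a_1\cdots a_n)|=|X|$.

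The lower bound has a genuine gap, and you have put your finger on it yourself without closing it. Your construction needs, for the given $\omega$ and the already-chosen first symbol $a_1$, an admissible block $b_1\cdots b_{n_0}$ following $a_{n-n_0}$ whose $i$-labels are exactly $\omega_{n-n_0+1},\ldots,\omega_n$ \emph{and} whose last symbol satisfies $m_{b_{n_0}a_1}=1$. Assumption (A2) produces a length-$n_0$ path from $a_{n-n_0}$ to $a_1$ with uncontrolled intermediate labels; your observation that there are only $|\mathcal A|^{n_0}=O(1)$ label patterns is beside the point, because the issue is not how many patterns can occur but whether the single pattern you are forced to use occurs at all. ``Absorbing the loss of mass into the constant'' presupposes the existence of at least one closing block with the prescribed labels, which is exactly what has not been established; if no such block exists the sum over $\mathcal W_{\omega,n}$ you are trying to bound from below could a priori be empty. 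So as written the lower bound is not proved.

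The paper's proof sidesteps the combinatorics entirely. For $n\geq n_0$ it asserts ${\rm cl}(T_\omega^n(J(a_1\cdots a_n)))=X\supset{\rm cl}(J(a_1\cdots a_n))$, so the intermediate value theorem puts a fixed point of $T_\omega^n$ in the closure of \emph{every} matching $n$-cylinder; the only cylinders that can fail to meet ${\rm Fix}(T_\omega^n)$ itself are those whose closures contain an endpoint of $X$, of which there are at most two. Hence
\[
\sum_{J(a_1\cdots a_n)\cap{\rm Fix}(T_\omega^n)\neq\emptyset}|J(a_1\cdots a_n)|\;\geq\;|X|-2\sup_{a_1\cdots a_n\in\mathcal A^n(\Sigma)}|J(a_1\cdots a_n)|\;\geq\;\tfrac{1}{2}|X|
\]
for all large $n$ by Lemma~\ref{u-decay}, and your left-hand inequality then yields $Z_{\omega,n}\geq e^{-D_n(\phi)}/(2|X|)$, with no need to exhibit closing words explicitly. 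If you want to salvage your route you must prove existence of the closing block (equivalently, that the image of the length-$n$ cylinder covers it), which is precisely the content of the paper's claim about ${\rm cl}(T_\omega^n(J(a_1\cdots a_n)))$; otherwise you should adopt this topological argument.
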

\begin{proof}
Recall the relation $a\in\mathcal A(i(a))$ for $a\in\mathcal A$.
Set $c=\inf_{a\in \mathcal A} |T_{i(a)}(J(a))|$. For $\omega\in\Omega$ and $n\geq1$ we have
  \begin{equation}\label{b-d1} \inf_{a_1\cdots a_n\in\mathcal A^n(\Sigma)}|T_\omega^n(
  J(a_1\cdots a_n))|\geq c>0.\end{equation}
    By the mean value theorem,
  for each $a_1\cdots a_n\in \mathcal A^n(\Sigma)$ there exists $x(a_1\cdots a_n)\in
  J(a_1\cdots a_n)$ such that
  \begin{equation}\label{b-d2}|(T_\omega^n)'x(a_1\cdots a_n)|^{-1}=\frac{|J(a_1\cdots a_n)|}{|T_\omega^n(J(a_1\cdots a_n))|}.\end{equation}
  Let $%{\mathscr{A}^n}(\omega)
  \mathcal A^n(\Sigma,\omega)=\{a_1\cdots a_n\in \mathcal A^n(\Sigma)\colon i({a}_k)=\omega_k\text{ for }1\leq k\leq n\}$. 
  Since all the maps are non-uniformly expanding,
  %each map $T_i$ is non-uniformly expanding, $|T_i'|>1$ holds 
  %everywhere except at most finitely many points.
  %Hence,
   for each $a_1\cdots a_n\in \mathcal A^n(\Sigma,\omega)$ the interval
 $J(a_1\cdots a_n)$ contains at most one point from ${\rm Fix}(T_\omega^n)$. Using \eqref{b-d1} and \eqref{b-d2}, on the one hand we have
\[\begin{split}Z_{\omega,n}&\leq e^{D_n(\phi)}\sum_{a_1\cdots a_n\in \mathcal A^n(\Sigma,\omega)   }|(T_\omega^n)'
x(a_1\cdots a_n)|^{-1}\\
&\leq c^{-1}e^{D_n(\phi)}\sum_{a_1\cdots a_n\in \mathcal A^n(\Sigma,\omega)   } |J(a_1\cdots a_n)|\leq c^{-1}e^{D_n(\phi)}.\end{split}\]

On the other hand,
let $n\geq n_0$ where $n_0$ is the integer in (A2). 
For $a_1\cdots a_n\in\mathcal A^n(\Sigma)$ we have
${\rm cl}(T_\omega^n(J(a_1\cdots a_n)))=X$. Hence,
if 
$J(a_1\cdots a_n)$ does not intersect ${\rm Fix}(T_\omega^n)$ then
${\rm cl}(J(a_1\cdots a_n))$ contains one of the boundary points of $X$. 
This implies
\begin{equation}\label{cardinality}
    \#\{a_1\cdots a_n\in \mathcal A^n(\Sigma,\omega)\colon
 J(a_1\cdots a_n)\cap{\rm Fix}(T_\omega^n)\neq\emptyset\}\leq 2.
\end{equation}
Using \eqref{b-d2} again we have
\[\begin{split}Z_{\omega,n}&\geq e^{-D_n(\phi)}\sum_{\stackrel{a_1\cdots a_n\in \mathcal A^n(\Sigma,\omega)  }{J(a_1\cdots a_n)\cap{\rm Fix}(T_\omega^n)\neq\emptyset}}|(T_\omega^n)'
x(a_1\cdots a_n)|^{-1}\\
&\geq \frac{e^{-D_n(\phi)}}{|X|}\sum_{\stackrel{
a_1\cdots a_n\in \mathcal A^n(\Sigma,\omega)  
 }{J(a_1\cdots a_n)\cap{\rm Fix}(T_\omega^n)\neq\emptyset}} |J(a_1\cdots a_n)|\geq \frac{e^{-D_n(\phi)}}{2|X|},\end{split}\]
where the last inequality holds for sufficiently large $n$ by Lemma~\ref{u-decay} and \eqref{cardinality}.
Set $c_n=D_n(\phi)+\log (2|X|/c)$. Lemma~\ref{mild} yields $c_n=o(n)$ as required.
\end{proof}

 \subsection{Samplewise level-2 upper bound}\label{samplewise-2}
 For $\omega\in\Omega$, we define a Borel probability measure
 $\tilde\mu_n^\omega$ on
 $\mathcal M(\Lambda)$ by
 \begin{equation}\label{tildemuomega}\tilde\mu_n^\omega=\frac{1}{Z_{\omega,n}}\sum_{x\in{\rm Fix}(T^n_\omega)}|(T_{\omega}^n)'x|^{-1}\delta_{\delta_{(\omega,x)}^n}\quad(n=1,2,\ldots),\end{equation}
 which is a samplewise version of $\tilde\mu_n$ in \eqref{mutilde}. %The relation of the two measures is 
% $\tilde\mu_n(\cdot)=\int_\Omega \tilde\mu_n^\omega(\cdot)\left({Z_{\omega,n} \Big/ \int_\Omega
%Z_{\omega',n} dm_p(\omega')}\right)dm_p(\omega).$
 % \textcolor{red}{ \begin{equation}\label{muomegan}\mu_n^\omega=\frac{1}{Z_{\omega,n}}\sum_{x\in{\rm Fix}(T^n_\omega)}|(T_{\omega}^n)'x|^{-1}\delta_{(\omega,x)}^n\quad(n=1,2,\ldots),\end{equation}}

% We combine the results in Section~\ref{section4-1} to deduce samplewise level-2 upper bounds.
%  Notice the identities
% \[\begin{split}\tilde\mu_n^\omega&=\frac{\left(\prod_{i=1}^qp_i^{\#\{1\leq j\leq n\colon\omega_j=i\}}\right)\sum_{x\in{\rm Fix}(T^n_\omega)}|(T_{\omega}^n)'x|^{-1}\delta_{\delta^n_{(\omega,x)}}}{\left(\prod_{i=1}^qp_i^{\#\{1\leq j\leq n\colon\omega_j=i\}}\right)\sum_{x\in{\rm Fix}(T^n_\omega)}|(T_{\omega}^n)'x|^{-1}}\\
%&=  \frac{\sum_{x\in{\rm Fix}(T^n_\omega)}\varrho_n(\omega,x)\delta_{\delta^n_{(\omega,x)}}}
%    {\sum_{x\in{\rm Fix}(T^n_\omega)}\varrho_n(\omega,x)}.\end{split}\]
%$[\omega]_n=[\omega_1\cdots\omega_n]$

\begin{prop}\label{ldpup-q}
Let
 $T_1,\ldots,T_N$ be non-uniformly expanding Markov maps on $X$ generating a nice, topologically mixing skew product Markov map. For $m_p$-almost every $\omega\in\Omega$ and
  any closed subset $\mathcal C$ of $\mathcal M(\Lambda)$, we have
\begin{equation}\label{ldpup-lem}\limsup_{n\to\infty}\frac{1}{n}\log \tilde\mu_{n}^\omega(\mathcal C)\leq -\inf_{\mathcal C} I_{\Lambda}.\end{equation}
\end{prop}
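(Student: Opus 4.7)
The strategy is to deduce the quenched bound from the annealed bound of Proposition~\ref{annealed-ldpup} by integrating $\tilde\mu_n^\omega(\mathcal C)$ against $m_p$ and then applying Markov's inequality together with the Borel--Cantelli lemma. The subexponential estimate $Z_{\omega,n}=e^{o(n)}$ supplied by Lemma~\ref{b-d} absorbs the loss incurred in passing between normalizations.

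For $\omega\in\Omega$ let $\omega^{(n)}\in\Omega$ be the periodic sequence whose first $n$ symbols coincide with $\omega_1\cdots\omega_n$. Then $T_\omega^n=T_{\omega^{(n)}}^n$, $\mathrm{Fix}(T_\omega^n)=\mathrm{Fix}(T_{\omega^{(n)}}^n)$ and $Z_{\omega,n}=Z_{\omega^{(n)},n}$; the $X$-coordinate of $R^k(\omega,x)$ coincides with that of $R^k(\omega^{(n)},x)$ for $0\leq k\leq n-1$, while the $\Omega$-coordinates $\theta^k\omega$ and $\theta^k\omega^{(n)}$ agree on their first $n-k$ symbols. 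Using uniform continuity of each $f\in C(\Lambda)$ on the compact space $\Lambda$, a routine averaging estimate shows that
\[\sup_{\omega\in\Omega,\,x\in\mathrm{Fix}(T_\omega^n)}\left|\int f\,d\delta_{(\omega,x)}^n-\int f\,d\delta_{(\omega^{(n)},x)}^n\right|\longrightarrow 0\quad(n\to\infty),\]
so in any metric compatible with the weak* topology the distance between $\delta_{(\omega,x)}^n$ and $\delta_{(\omega^{(n)},x)}^n$ tends to zero uniformly in $(\omega,x)$. Hence for any closed $\mathcal C\subset\mathcal M(\Lambda)$ and any $\varepsilon>0$, for all $n$ large enough the condition $\delta_{(\omega,x)}^n\in\mathcal C$ forces $\delta_{(\omega^{(n)},x)}^n$ into the closed $\varepsilon$-thickening $\mathcal C_\varepsilon:=\{\mu:d(\mu,\mathcal C)\leq\varepsilon\}$, giving $\tilde\mu_n^\omega(\mathcal C)\leq\tilde\mu_n^{\omega^{(n)}}(\mathcal C_\varepsilon)$. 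Since the right-hand side depends only on $\omega_1\cdots\omega_n$, integrating against $m_p$ produces a sum weighted by $Q_p(\omega_1\cdots\omega_n)$ which reassembles as the sum over $\mathrm{Fix}(R^n)$ defining $Z_{p,n}\tilde\mu_n(\mathcal C_\varepsilon)$. After bounding $Z_{\omega^{(n)},n}^{-1}$ and $Z_{p,n}$ by $e^{c_n}$ through Lemma~\ref{b-d}, one obtains
\[\int_\Omega\tilde\mu_n^\omega(\mathcal C)\,dm_p(\omega)\leq e^{2c_n}\tilde\mu_n(\mathcal C_\varepsilon).\]

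Fix $\alpha<\inf_{\mathcal C}I_\Lambda$. By lower semicontinuity of $I_\Lambda$ (from Proposition~\ref{annealed-ldpup}) together with compactness of $\mathcal M(\Lambda)$, one can choose $\varepsilon>0$ small enough that $\inf_{\mathcal C_\varepsilon}I_\Lambda>\alpha$. Proposition~\ref{annealed-ldpup} then gives $\tilde\mu_n(\mathcal C_\varepsilon)\leq e^{-n\alpha+o(n)}$, and since $c_n=o(n)$ one concludes $\int\tilde\mu_n^\omega(\mathcal C)\,dm_p(\omega)\leq e^{-n\alpha+o(n)}$. For any $\beta<\alpha$, Markov's inequality now yields $m_p\{\omega:\tilde\mu_n^\omega(\mathcal C)\geq e^{-n\beta}\}\leq e^{-n(\alpha-\beta)+o(n)}$, which is summable in $n$. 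Borel--Cantelli then produces an $m_p$-full set on which $\limsup_{n\to\infty}n^{-1}\log\tilde\mu_n^\omega(\mathcal C)\leq -\beta$; letting $\beta\uparrow\alpha\uparrow\inf_{\mathcal C}I_\Lambda$ along countable sequences delivers \eqref{ldpup-lem} for this fixed $\mathcal C$.

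To handle all closed $\mathcal C$ simultaneously, fix a countable base $\{B_j\}$ of open balls in the compact metric space $\mathcal M(\Lambda)$ and apply the preceding argument to each closed ball $\overline{B_j}$. Intersecting the resulting countable family of $m_p$-full sets yields a single $m_p$-full set $\Omega^\ast$ on which the bound holds for every $\overline{B_j}$. For an arbitrary closed $\mathcal C$ and any $\alpha<\inf_{\mathcal C}I_\Lambda$, lower semicontinuity of $I_\Lambda$ together with the compactness of $\mathcal C$ produces a finite subcover $\overline{B_{j_1}},\ldots,\overline{B_{j_K}}$ of $\mathcal C$ with $\inf_{\overline{B_{j_i}}}I_\Lambda>\alpha$ for each $i$, and summing the bounds for these balls yields \eqref{ldpup-lem}. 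The main technical obstacle is the uniform weak* comparison between $\delta_{(\omega,x)}^n$ and its periodic analogue $\delta_{(\omega^{(n)},x)}^n$; this is what allows the annealed bound, which is only sensitive to periodic orbits of $R$, to control the quenched measure at the genuinely non-periodic sample~$\omega$.
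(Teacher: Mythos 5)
Your argument is correct, and its skeleton coincides with the paper's: transfer the annealed level-2 upper bound of Proposition~\ref{annealed-ldpup} to a quenched one by integrating $\tilde\mu_n^\omega(\mathcal C)$ over $\omega$, absorb the normalizing constants through Lemma~\ref{b-d} (so that $Z_{\omega,n}^{-1}$ and $Z_{p,n}$ only cost $e^{c_n}=e^{o(n)}$), apply Markov's inequality and Borel--Cantelli for each fixed closed set, and finally pass to a single full-measure set valid for all closed $\mathcal C$ by the countable family of closed balls together with compactness and lower semicontinuity of $I_{\Lambda}$. Where you genuinely deviate is in how the quenched and annealed measures are compared. The paper writes $\tilde\mu_n(\mathcal C)=\int \tilde\mu_n^\omega(\mathcal C)\bigl(Z_{\omega,n}\big/\int Z_{\omega',n}\,dm_p(\omega')\bigr)dm_p(\omega)$ as an exact identity, which tacitly identifies the empirical measure $\delta_{(\omega,x)}^n$ of a generic sample with that of the periodic point $(\omega^{(n)},x)\in{\rm Fix}(R^n)$, where $\omega^{(n)}$ is the periodic repetition of $\omega_1\cdots\omega_n$; taken literally this is not an identity, because the event $\delta_{(\omega,x)}^n\in\mathcal C$ depends on the whole tail of $\omega$ while $\tilde\mu_n$ only sees periodic samples. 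Your uniform weak* comparison of $\delta_{(\omega,x)}^n$ with $\delta_{(\omega^{(n)},x)}^n$, followed by the passage to the closed $\varepsilon$-thickening $\mathcal C_\varepsilon$ with $\varepsilon$ chosen (via lower semicontinuity of $I_{\Lambda}$ and compactness of $\mathcal M(\Lambda)$) so that $\inf_{\mathcal C_\varepsilon}I_{\Lambda}>\alpha$, addresses exactly this point: it yields the inequality $\int\tilde\mu_n^\omega(\mathcal C)\,dm_p(\omega)\leq e^{2c_n}\tilde\mu_n(\mathcal C_\varepsilon)$, which is all the subsequent Markov--Borel--Cantelli step needs, and the thickening costs nothing in the limit since $\alpha$ can be taken arbitrarily close to $\inf_{\mathcal C}I_{\Lambda}$. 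In short, the quantitative ingredients are identical to the paper's, but your version makes rigorous the one step the paper compresses into an ``identity'', at the price of an extra (harmless) approximation layer; the paper's version is shorter but leans on the reader to supply precisely the periodization argument you spelled out.
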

\begin{proof}
%Since $\mathcal M(\Omega\times[0,1])$ is compact, separable and metrizable, and since $I$ is lower semicontinuous, 
We claim that it is enough to show \eqref{ldpup-lem}
for each closed subset $\mathcal C$ of $\mathcal M(\Lambda)$ 
and any sample $\omega$ contained in a Borel set $\Omega_{\mathcal C}$ with full $m_p$-measure. 
To show this claim, 
%note that $\mathcal M(\Omega\times[0,1])$ is a compact metrizable space.
we fix a metric which generates the weak* topology on $\mathcal M(\Lambda)$, and fix its
  countable dense subset $\mathcal D$.
For $\nu\in \mathcal D$ and $r>0$ 
let $\mathcal B_r(\nu)$ denote the closed ball of radius $r$ about $\nu$.
From \eqref{ldpup-lem},
 the Borel set $\bigcap_{\nu\in\mathcal D, r\in\mathbb Q,r>0}\Omega_{\mathcal B_r(\nu)}$ has full $m_p$-measure, and if $\omega\in\Omega$ is contained in this set, then for 
$\nu\in \mathcal D$ and $r\in\mathbb Q$ with $r>0$ we have
\[\limsup_{n\to\infty}\frac{1}{n}\log \tilde\mu_{n}^\omega(\mathcal B_r(\nu))\leq -\inf_{\mathcal B_r(\nu)}I_{\Lambda}.\]
Let $\mathcal C$ be a non-empty closed subset of $\mathcal M(\Lambda)$.
Let $\mathcal G$ be an open subset of $\mathcal M(\Lambda)$ which contains $\mathcal C$.
Since $\mathcal C$ is compact,
there exists a finite subset $\{\nu_1,\ldots,\nu_s\}$ of $\mathcal D$ and $r_1,\ldots,r_s\in\mathbb Q$ such that
$\mathcal C\subset \bigcup_{j=1}^s\mathcal B_{r_j}(\nu_j)\subset\mathcal G$.
Then
\[\begin{split}\limsup_{n\to\infty}\frac{1}{n}\log \tilde\mu_{n}^\omega(\mathcal C)&\leq\max_{1\leq j\leq s}\limsup_{n\to\infty}\frac{1}{n}\log \tilde\mu_n^{\omega}(\mathcal B_{r_j}(\nu_j))\\
&\leq\max_{1\leq j\leq s}\left( -\inf_{\mathcal B_{r_j}(\nu_j)}I_{\Lambda}\right)\leq-\inf_{\mathcal G} I_{\Lambda}.\end{split}\]
Since $\mathcal G$ is an arbitrary open set containing $\mathcal C$ and $I_{\Lambda}$ is lower semicontinuous,
we obtain \eqref{ldpup-lem}.

In what follows we assume $0<\inf_{\mathcal C}I_{\Lambda}<\infty$, for otherwise
\eqref{ldpup-lem} clearly holds.
% If $m_p\times \lambda_p\in\mathcal C$ then $\inf_{\mathcal C}I=0$ by Lemma~\ref{minimizer} and
 %\eqref{ldpup-lem} holds trivially for all $\omega\in\Omega$.
 %Assume $m_p\times\lambda_p\notin\mathcal C$. 
 %Then $\inf_{\mathcal C}I>0$ holds, for otherwise the lower semi-continuity of $I$ and the compactness %of $\mathcal C$ together imply that there exists $\mu\in\mathcal C$
 %with $I(\mu)=0$, contradicting Lemma~\ref{minimizer}.
 Using the definitions of $\tilde\mu_n$ in \eqref{mutilde} and $\tilde\mu_n^\omega$ in \eqref{tildemuomega} and
 the formula
\begin{equation}\label{zn}Z_{p,n}=\sum_{\omega_1\cdots \omega_n\in \{1,\ldots,N\}^n}Q_p(\omega_1\cdots\omega_n) Z_{\omega,n},\end{equation}
 and then Lemma~\ref{b-d} we have 
\[\begin{split}
\tilde\mu_n(\mathcal C)&=\frac{1}{Z_{p,n}}\sum_{\stackrel{(\omega,x)\in{\rm Fix}(R^n)}{\delta_{(\omega,x)}^n\in\mathcal C}}Q_p(\omega_1\cdots\omega_n)|(T_\omega^n)'x|^{-1}\\
&=\int \sum_{\stackrel{x\in{\rm Fix}(T^n_\omega)}{\delta_{(\omega,x)}^n\in\mathcal C}}|(T_{\omega}^n)'x|^{-1} dm_p(\omega) \Big/\int Z_{\omega',n}  dm_p(\omega') \\
&=\int \tilde\mu_n^\omega(\mathcal C)\left({Z_{\omega,n} \Big/ \int
Z_{\omega',n} dm_p(\omega')}\right)dm_p(\omega)\geq e^{-2c_n} \int\tilde\mu_n^\omega(\mathcal C)\ dm_p(\omega).
\end{split}\]
For $\epsilon\in(0,1)$ and $n\geq1$, set 
\[\Omega_{\epsilon,n}=\left\{\omega\in\Omega\colon \tilde\mu_n^\omega(\mathcal C)\geq \exp\left(-n(1-\epsilon)\inf_{\mathcal C} I_{\Lambda}\right)\right\}.\]
Then Markov's inequality yields
\[
\begin{split}m_p(\Omega_{\epsilon,n})
&\leq \exp\left(n(1-\epsilon)\inf_{\mathcal C} I_{\Lambda}\right)\int\tilde\mu_n^\omega(\mathcal C)d m_p(\omega)\\
&\leq e^{2c_n}\exp\left(n(1-\epsilon)\inf_{\mathcal C}I_{\Lambda}\right)\tilde\mu_n(\mathcal C).\end{split}\]
By Proposition~\ref{annealed-ldpup},
$m_p(\Omega_{\epsilon,n})$ decays exponentially as $n$ increases.
By Borel-Cantelli's lemma, the number of those $n\geq1$
for which the inequality $\tilde\mu_n^\omega(\mathcal C)\geq \exp(-n(1-\epsilon)\inf_{\mathcal C} I_{\Lambda})$ holds is finite for $m_p$-almost every $\omega\in\Omega.$
Since $\epsilon\in(0,1)$ is arbitrary, we obtain 
\eqref{ldpup-lem} for $m_p$-almost every $\omega\in\Omega$. 
%The proof of Proposition~\ref{ldpup-q} is complete.
\end{proof}

%We have shown in Proposition~\ref{unique-equi} that the measure $\mu_p$ on the shift space $\Sigma$ is the unique equilibrium state for the random geometric potential. %Translating this result into the one on the original space $\Omega\times[0,1]$ we obtain 
%The following uniqueness of a measure minimizing $I$ is a translation of this result into the one on the original space $\Omega\times[0,1]$.

\subsection{Samplewise convergence of measures}\label{sample-2}
From the samplewise level-2 upper bound in Section~\ref{samplewise-2} we deduce the following samplewise weak* convergence.
\begin{prop}\label{abstract}
Let
 $T_1,\ldots,T_N$ be non-uniformly expanding Markov maps on $X$ generating a nice, topologically mixing skew product Markov map. 
If the uniqueness of equilibrium state holds
 for a probability vector $p$,
 then
 for $m_p$-almost every $\omega\in\Omega$, $(\tilde\mu_n^\omega)_{n=1}^\infty$ converges to $\delta_{m_p\otimes\lambda_p}$ in the weak* topology as $n\to\infty$.
 %we have
 %\[
%\lim_{n\to\infty}\frac{1}{Z_{\omega,n}}\sum_{x\in{\rm Fix}(T_\omega^n)}|(T^n_\omega)'x|^{-1}
%\Xi(\delta_x^{\omega,n})=\Xi(m_p\otimes\lambda_p)\]
%for any continuous function %$\Xi\colon \mathcal M(\Lambda)\to\mathbb R$.
\end{prop}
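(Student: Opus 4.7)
The plan is to combine the samplewise large deviations upper bound (Proposition~\ref{ldpup-q}) with the uniqueness assumption (A4) to force the measures $\tilde\mu_n^\omega$ to concentrate at $m_p\otimes\lambda_p$. Since $\Lambda$ is compact metric, the weak* topology on $\mathcal M(\Lambda)$ is compact and metrizable, so the sequence $(\tilde\mu_n^\omega)_{n=1}^\infty$ in $\mathcal M(\mathcal M(\Lambda))$ always has weak* accumulation points and it suffices to show that every open neighborhood of $m_p\otimes\lambda_p$ is eventually given full mass.

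The first step is to identify the zero set of the rate function $I_{\Lambda}$. From $P(\phi)=0$ (Lemma~\ref{exist-equi}) and the variational principle \eqref{VP}, $I_\Sigma(\nu)=0$ is equivalent to $\nu\in\mathcal M(\Sigma,\sigma)$ being an equilibrium state for $\phi$. By (A4), the only such measure is $(m_p\otimes\lambda_p)\circ\pi$, and its pushforward under $\pi$ equals $m_p\otimes\lambda_p$ (using that $\pi$ is a Borel bijection off the boundary set $E$, which carries no mass). Hence by definition of $I_\Lambda$ we obtain $I_\Lambda^{-1}(0)=\{m_p\otimes\lambda_p\}$.

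Next, fix any open neighborhood $\mathcal G$ of $m_p\otimes\lambda_p$ in $\mathcal M(\Lambda)$. Its complement $\mathcal C=\mathcal M(\Lambda)\setminus\mathcal G$ is a closed, hence compact, subset of the compact space $\mathcal M(\Lambda)$. By the lower semicontinuity of $I_\Lambda$ (Proposition~\ref{annealed-ldpup}) and the fact that $\mathcal C$ misses the unique zero of $I_\Lambda$, the infimum is attained and strictly positive: $\inf_{\mathcal C} I_\Lambda>0$. Now pick any $\omega$ in the full-$m_p$-measure set furnished by Proposition~\ref{ldpup-q}, on which the upper bound holds simultaneously for every closed set. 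For such $\omega$,
\[\limsup_{n\to\infty}\frac{1}{n}\log\tilde\mu_n^\omega(\mathcal C)\leq -\inf_{\mathcal C} I_{\Lambda}<0,\]
so $\tilde\mu_n^\omega(\mathcal C)\to 0$ and equivalently $\tilde\mu_n^\omega(\mathcal G)\to 1$. As $\mathcal G$ was arbitrary, this is exactly weak* convergence $\tilde\mu_n^\omega\to\delta_{m_p\otimes\lambda_p}$ in $\mathcal M(\mathcal M(\Lambda))$.

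The main delicate point is the first step, specifically checking that the $\pi$-pushforward of the unique equilibrium state on $\Sigma$ truly coincides with $m_p\otimes\lambda_p$ on all Borel sets of $\Lambda$ (not just off the coding singularity set $E$). This rests on the fact that $\pi$ is injective and bicontinuous on $\Sigma\setminus\pi^{-1}(E)$ and that $E$ is negligible for the measures in question, which follows from the way $E$ is built out of preimages of finitely many boundary points under the expanding dynamics.
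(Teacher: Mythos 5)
Your argument is correct and is essentially the paper's own proof: both rest on the samplewise upper bound of Proposition~\ref{ldpup-q} together with the identification of the unique zero of $I_\Lambda$ coming from (A4) and $P(\phi)=0$ (Lemma~\ref{exist-equi}); you merely package the concentration step differently (applying the bound directly to the closed complement of a neighborhood of $m_p\otimes\lambda_p$, rather than passing to a convergent subsequence and killing the support of the limit ball by ball, as the paper does). Two details deserve attention. First, your assertion that $I_\Lambda^{-1}(0)=\{m_p\otimes\lambda_p\}$ ``by definition of $I_\Lambda$'' hides the fact that $I_\Lambda$ is an infimum over the fiber $\{\nu\in\mathcal M(\Sigma)\colon \nu\circ\pi^{-1}=\mu\}$; to conclude from $I_\Lambda(\mu)=0$ that some $\nu$ in the fiber has $I_\Sigma(\nu)=0$ you need the infimum to be attained, which follows since the fiber is closed in the compact space $\mathcal M(\Sigma)$ and $I_\Sigma$ is lower semicontinuous---this is exactly what the paper's Lemma~\ref{minimizer} supplies via a limit-point argument, and you should include that sentence. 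Second, your closing justification that ``$E$ is negligible for the measures in question'' is neither needed nor true in the paper's generality: for non-uniformly expanding generators with a common neutral fixed point at a partition boundary one can have $\lambda_p=\delta_0$ (cf.\ Proposition~\ref{ex2-prop}(b)), in which case $m_p\otimes\lambda_p$ is carried by $E$. What the step actually requires is only the identity $\bigl((m_p\otimes\lambda_p)\circ\pi\bigr)\circ\pi^{-1}=m_p\otimes\lambda_p$, which holds because $\pi(\pi^{-1}(B))=B\cap\pi(\Sigma)$ and $\pi(\Sigma)$ has full $m_p\otimes\lambda_p$-measure, the latter being built into (A4) since $(m_p\otimes\lambda_p)\circ\pi$ is a probability measure on $\Sigma$. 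With these two repairs your proof is complete.
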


\begin{proof}
Let $\omega\in\Omega$ be as in Proposition~\ref{ldpup-q}.
Let $(\tilde{\mu}_{n_j}^\omega)_{j=1}^\infty$
be an arbitrary convergent subsequence of $(\tilde\mu_n^\omega)_{n=1}^\infty$ with the limit measure $\tilde\mu^\omega$. It suffices to show 
 $\tilde\mu^\omega=\delta_{m_p\otimes\lambda_p}$.

 \begin{lemma}\label{minimizer} 
We have $I_{\Lambda}(\mu)=0$ if and only if $\mu=m_p\otimes\lambda_p$.\end{lemma}
%\begin{proof}
%Lemma~\ref{exist-equi} gives $I_{\Lambda}(m_p\times\lambda_p)=0$.
% Conversely,
%since $\mathcal M(\Sigma)$ is compact and $I$ is lower semicontinuous, 
%$I_{\Lambda}(\mu)=0$ implies $\mu=m_p\times\lambda_p$.
%\textcolor{red}{Too short.}
%\end{proof}
\begin{proof}By (A4) we have $F((m_p\otimes\lambda_p)\circ\pi)=P(\phi)$, and
Lemma~\ref{exist-equi} gives $P(\phi)=0$.
Hence,
$I_{\Lambda}(m_p\otimes\lambda_p)=0$.
 Conversely, let $\mu\in\mathcal M(\Lambda)$ satisfy $I_{\Lambda}(\mu)=0$. Then there exists a sequence $(\nu_n)_{n=1}^\infty$ in $\mathcal M(\Sigma)$ such that $\mu=\nu_n\circ\pi^{-1}$ and $\lim_{n\to\infty}I_\Sigma(\nu_n)=0$. By compactness, it has
   a limit point $\nu_\infty$.
 Since $I_\Sigma$ is lower semicontinuous we have $I_\Sigma(\nu_\infty)=0$,
 and so $F(\nu_\infty)=0$.
 The uniqueness in (A4) yields $\nu_\infty=(m_p\otimes\lambda_p)\circ\pi$, and thus $\mu=m_p\otimes\lambda_p$. 
\end{proof}

%Returning to the proof of Proposition~\ref{abstract},
We fix a metric which generates the weak* topology on $\mathcal M(\Lambda)$. 
For $\epsilon>0$ let $\mathcal L_\epsilon=\{\mu\in\mathcal M(\Lambda)\colon I_{\Lambda}(\mu)\leq\epsilon\}$.
Since $I_{\Lambda}$ is lower semicontinuous,
$\mathcal L_\epsilon$ is a closed set. 
Since $\mathcal M(\Lambda)$ is compact, so is $\mathcal L_\epsilon$. 
Let $\nu\in\mathcal M(\Lambda)\setminus\{m_p\otimes\lambda_p\}$. Lemma~\ref{minimizer} gives $I_{\Lambda}(\nu)>0$.
 Take $r>0$ such that the closed ball $\mathcal B_r(\nu)$ of radius $r$ about $\nu$ 
 does not intersect $\mathcal L_{I_{\Lambda}(\nu)/2}$.
By the convergence
$\lim_{j\to\infty}\tilde{\mu}_{n_j}^\omega=\tilde\mu^\omega$
 and Proposition~\ref{ldpup-q},
we have
\[\begin{split}\tilde\mu^\omega({\rm int}(\mathcal B_r(\nu)))&\leq\liminf_{j\to\infty}\tilde\mu_{n_j}^\omega({\rm int}(\mathcal B_r(\nu)))\leq\limsup_{j\to\infty}\tilde\mu_{n_j}^\omega(\mathcal B_r(\nu))\\
&\leq\limsup_{j\to\infty}\exp(-I_{\Lambda}(\nu) n_j/2)=0.\end{split}\]
Hence, the support of $\tilde\mu^\omega$ does not contain $\nu$. Since
 $\nu$ is an arbitrary element of
 $\mathcal M(\Lambda)\setminus\{m_p\otimes\lambda_p\}$, we obtain
 $\tilde\mu^\omega=\delta_{m_p\otimes\lambda_p}$ as required.
\end{proof}

\subsection{Proof of Theorem~A}\label{pf-thma}

%\footnote{The equality $\nu_n^\omega=\frac{1}{Z_{\omega,n}}\sum_{x\in{\rm Fix}(T^n_\omega)}|(T_{\omega}^n)'x|^{-1}\delta_{x}$ is true in the deterministic case, and false is general.}
Since the projection $\Pi\colon\Lambda\to X$
is continuous, the push-forward $\Pi_*\colon\mu\in\mathcal M(\Lambda)\mapsto\mu\circ\Pi^{-1}\in\mathcal M(X)$ is continuous.
Another push-forward 
$\Pi_{**}\colon\tilde\mu\in \mathcal M(\mathcal M(\Lambda))\mapsto\tilde\mu\circ(\Pi_*)^{-1}\in\mathcal M(\mathcal M(X))$ is continuous too.
Note that
$\Pi_*(\mu)=\nu$ implies
$\Pi_{**}(\delta_{\mu})=\delta_{\nu}.$
In particular, $\Pi_{**}(\delta_{m_p\otimes\lambda_p})=\delta_{\lambda_p}$ and
$\Pi_{**}(\delta_{\delta_{(\omega,x)}^n})=\delta_{\delta_x^{\omega,n}}$, and the latter yields
 $\Pi_{**}(\tilde\mu_n^\omega)
=\tilde\xi_n^\omega.$
By Proposition~\ref{abstract},  for $m_p$-almost every $\omega\in\Omega$
we have
$\tilde\mu_n^\omega\to\delta_{m_p\otimes\lambda_p}$ in the weak* topology as $n\to\infty$.
Since $\Pi_{**}$ is continuous, we obtain
$\tilde\xi_n^\omega\to\delta_{\lambda_p}$ in the weak* topology as $n\to\infty$.
%we obtain
%$\lim_{n\to\infty}\tilde\xi_n^\omega=\delta_{\lambda_p}$ 
%as required.
\qed

\subsection{Averaged result}\label{av-r}
As a corollary to Theorem~A, we obtain an averaged result 
over all samples.
By Riesz's representation theorem, for each positive probability vector $p$ and $n\geq1$
there is a unique Borel probability measure 
 $\tilde\eta_{p,n}$ on $\mathcal M(X)$ that satisfies
\begin{equation}\label{tildeetapn}\int\tilde \varphi d\tilde\eta_{p,n}=\int dm_p(\omega)\int\tilde \varphi d\tilde\xi_{n}^{\omega}\quad\text{for any continuous }\tilde \varphi\colon\mathcal M(X)\to\mathbb R.\end{equation}
Also,
there is a unique Borel probability measure 
$\eta_{p,n}$ on $X$ that satisfies
\begin{equation}\label{etapn}\int \varphi d\eta_{p,n}=\int dm_p(\omega)\int \varphi d\xi_{n}^{\omega}\quad\text{for any continuous }\varphi\colon X\to\mathbb R.\end{equation}
%for any continuous function $f\colon X\to\mathbb R$.

\begin{corollary}[Averaged weighted equidistribution of random cycles II]\label{thmb}

Let 

\noindent
$T_1,\ldots,T_N$ and $p$ be as in Theorem~A.
 Then 
 $(\tilde{\eta}_{p,n})_{n=1}^\infty$ converges to
$\delta_{\lambda_p}$ in the weak* topology as $n\to\infty$ and $(\eta_{p,n})_{n=1}^\infty$ converges to $\lambda_p$ in the weak* topology as $n\to\infty$.
\end{corollary}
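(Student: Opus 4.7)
The plan is to deduce this averaged result directly from Theorem~A by integrating the samplewise convergence against $m_p$, with the dominated convergence theorem handling the passage to the limit inside the integral.

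For the first assertion, I would fix an arbitrary bounded continuous function $\tilde\varphi\colon\mathcal M(X)\to\mathbb R$ and consider the scalar sequence $\int\tilde\varphi d\tilde\eta_{p,n}=\int dm_p(\omega)\int\tilde\varphi d\tilde\xi_n^\omega$ given by the defining identity \eqref{tildeetapn}. By Theorem~A, for $m_p$-almost every $\omega\in\Omega$ we have $\tilde\xi_n^\omega\to\delta_{\lambda_p}$ weakly$^*$, hence $\int\tilde\varphi d\tilde\xi_n^\omega\to\tilde\varphi(\lambda_p)$ as $n\to\infty$. Since $\tilde\varphi$ is bounded, the integrand $\omega\mapsto\int\tilde\varphi d\tilde\xi_n^\omega$ is uniformly bounded in $n$ by $\|\tilde\varphi\|_\infty$, so the dominated convergence theorem yields
\[\lim_{n\to\infty}\int\tilde\varphi d\tilde\eta_{p,n}=\int dm_p(\omega)\,\tilde\varphi(\lambda_p)=\tilde\varphi(\lambda_p),\]
which is exactly $\int\tilde\varphi d\delta_{\lambda_p}$. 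Because $\tilde\varphi$ was arbitrary, this gives $\tilde\eta_{p,n}\to\delta_{\lambda_p}$ weakly$^*$.

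For the second assertion I would reduce to the first by a barycenter trick. Given a bounded continuous $\varphi\colon X\to\mathbb R$, define $\tilde\varphi\colon\mathcal M(X)\to\mathbb R$ by $\tilde\varphi(\mu)=\int\varphi d\mu$; this $\tilde\varphi$ is continuous in the weak$^*$ topology and bounded by $\|\varphi\|_\infty$. From the definitions of $\xi_n^\omega$ in \eqref{measure-xi} and $\tilde\xi_n^\omega$ in \eqref{random-c} one checks $\int\varphi d\xi_n^\omega=\int\tilde\varphi d\tilde\xi_n^\omega$, and combining with \eqref{etapn} and \eqref{tildeetapn} gives $\int\varphi d\eta_{p,n}=\int\tilde\varphi d\tilde\eta_{p,n}$. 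Applying the first assertion to this particular $\tilde\varphi$ yields $\int\varphi d\eta_{p,n}\to\tilde\varphi(\lambda_p)=\int\varphi d\lambda_p$, proving $\eta_{p,n}\to\lambda_p$ weakly$^*$.

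No step looks like a serious obstacle: the work was already done in Theorem~A, and the only subtlety is to make sure the almost-sure convergence from Theorem~A can be integrated, which is immediate from boundedness of $\tilde\varphi$ and dominated convergence. The second assertion is a purely formal consequence of the first once one observes that the map sending a measure on $X$ to its integral against $\varphi$ is a continuous functional on $\mathcal M(X)$.
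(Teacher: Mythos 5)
Your proposal is correct and follows essentially the same route as the paper: fix a continuous $\tilde\varphi$ on $\mathcal M(X)$, combine the almost-sure convergence from Theorem~A with the dominated convergence theorem via the identity \eqref{tildeetapn}, and deduce the weak* convergence of $(\tilde\eta_{p,n})_{n=1}^\infty$. The paper merely states that the second convergence follows from the first; your barycenter argument (testing against $\tilde\varphi(\mu)=\int\varphi\,d\mu$) is exactly the intended, routine justification of that step.
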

\begin{proof}
Let $\tilde \varphi\colon\mathcal M(X)\to\mathbb R$ be an arbitrary continuous function.
By \eqref{tildeetapn} and
Theorem~A, $\lim_{n\to\infty}\int\tilde \varphi d\tilde\xi_{n}^\omega=\int\tilde \varphi d\delta_{\lambda_p}$ holds
for $m_p$-almost every $\omega\in \Omega$.
The dominated convergence theorem gives $\lim_{n\to\infty}\int\tilde \varphi d\eta_{p,n}=\int\tilde \varphi d\delta_{\lambda_p}$. 
Since $\tilde\varphi$ is an arbitrary continuous function on $\mathcal M(X)$, 
we obtain
 $\tilde\eta_{p,n}\to\delta_{\lambda_p}$ in the weak* topology 
 as $n\to\infty$. 
The second 
convergence in the corollary follows from the first one.
\end{proof}

\subsection{Proof of Theorem~B}\label{pf-thmc}
Let $p$ be an $N$-dimensional positive probability vector. Define a Borel probability measure
$\tilde{\zeta}_{p,n}$ on $\mathcal M(X)$ by
\[\tilde{\zeta}_{p,n}=\frac{1}{Z_{p,n}}\sum_{\omega_1\cdots \omega_n\in\{1,\ldots,N\}^n}Q_{p}(\omega_1\cdots\omega_n)\sum_{x\in{\rm Fix}(T_{\omega}^n)}|(T_{\omega}^n)'x|^{-1}\delta_{\delta_x^{\omega,n}}\quad(n=1,2,\ldots).\]
It is enough to show that $\tilde\zeta_{p,n}$ converges to $\delta_{\lambda_p}$
in the weak* topology as $n\to\infty$.
Let $\mathcal C$ be an arbitrary closed subset of $\mathcal M(X)$ not containing $\lambda_p$. 
From the proof of Proposition~\ref{abstract},
  there exists $\alpha>0$ such that
$\tilde\xi_n^\omega(\mathcal C)=\tilde\mu_{n}^\omega(\Pi_{*}^{-1}(\mathcal C))\leq e^{-\alpha n}$ for all sufficiently large $n$.
Since
$\tilde\eta_{p,n}(\mathcal C)=\int\tilde\xi_{n}^\omega(\mathcal C)dm_p(\omega)$ by \eqref{tildeetapn},
the dominated convergence theorem gives
$\lim_{n\to\infty}e^{\alpha n/2} \tilde\eta_{p,n}(\mathcal C)=\lim_{n\to\infty}\int e^{\alpha n/2}\tilde\xi_{n}^\omega(\mathcal C)dm_p(\omega)=0$, and so
 $\lim_{n\to\infty}\tilde\eta_{p,n}(\mathcal C)=0$.

Using Lemma~\ref{b-d} we have
\[\begin{split}
\tilde\zeta_{p,n}(\mathcal C)&=\frac{1}{Z_{p,n}}\sum_{\omega_1\cdots\omega_n\in\{1,\ldots,N\}^n}\sum_{\stackrel{x\in{\rm Fix}(T^n_\omega)}{\delta_{x}^{\omega,n}\in\mathcal C}}Q_p(\omega_1\cdots\omega_n)|(T_\omega^n)'x|^{-1}\\
&=\int\sum_{\stackrel{x\in{\rm Fix}(T_\omega^n)}{\delta_{x}^{\omega,n}\in\mathcal C}}|(T_\omega^n)'x|^{-1} dm_p(\omega)\Big/\int Z_{\omega',n} dm_p(\omega')\\
&\leq e^{c_n}\int\sum_{\stackrel{x\in{\rm Fix}(T_\omega^n)}{\delta_{x}^{\omega,n}\in\mathcal C}}|(T_\omega^n)'x|^{-1} dm_p(\omega)\Big/\min_{\omega'\in\Omega}Z_{\omega',n}\\
&\leq e^{2c_n}\int \frac{1}{Z_{\omega,n}}\sum_{\stackrel{x\in{\rm Fix}(T_\omega^n)}{\delta_{x}^{\omega,n}\in\mathcal C}}|(T_\omega^n)'x|^{-1} dm_p(\omega)= e^{2c_n}\tilde\eta_{p,n}(\mathcal C),\end{split}\]
 and therefore
$\lim_{n\to\infty}\tilde\zeta_{p,n}(\mathcal C)=0.$
Since $\mathcal C$ is an arbitrary closed subset of $\mathcal M(X)$
not containing $\lambda_p$, it follows that
 $\tilde\zeta_{p,n}\to\lambda_p$
 in the weak* topology as $n\to\infty$.\qed

\section{Random cycles with weight functions on the product space}
In this section we slightly extend ideas in Section~2 to prove Theorem~C.
In Section~\ref{est1} we introduce a sequence of measures on the product space $\Lambda$, and establish the level-1 large deviations upper bound for acceptable functions.
%Using the key estimate in Section~\ref{subexp}, 
In Section~\ref{sample-1}
we deduce a samplewise almost-sure level-1 large deviations upper bound.
In Section~\ref{est2} we complete the proof of Theorem~C.

\subsection{Level-1 upper bound for the skew product map}\label{est1}
We define a Borel probability measure $\mu_n$ on $\Lambda$ by
\begin{equation}\label{mun}\mu_n=\frac{1}{Z_{p,n}}\sum_{(\omega,x)\in {\rm Fix}(R^n)}Q_p(\omega_1\cdots\omega_n) |(T_\omega^n)'x|^{-1}
 \delta_{(\omega,x)}\quad(n=1,2,\ldots).\end{equation}
% Let $\psi\colon\Lambda\to\mathbb R$ be such that $\psi\circ\pi$ is continuous.
\begin{definition}\label{function-a}
We say a function $\tilde \varphi\colon\mathcal M(\Lambda)\to\mathbb R$ is {\it acceptable} if $\nu\in\mathcal M(\Sigma)\mapsto
\tilde \varphi(\nu\circ\pi^{-1})$ is continuous.
We say a function $\varphi\colon\Lambda\to\mathbb R$ is {\it acceptable} if
$\varphi\circ\pi$ is continuous.
\end{definition}
\begin{remark}
{\rm The class of acceptable functions is strictly larger than that of continuous ones. Hence, Theorem~C is not a consequence of Proposition~\ref{abstract}.}\end{remark}

Let $\tilde \varphi$ be an acceptable function on $\mathcal M(\Lambda)$.
%Let us write $S_n\psi$ for the sum $\sum_{k=0}^{n-1}\psi\circ R^k$. For a closed subset $K$ of $\mathbb R$,
By means of large deviations,
we estimate the exponential decay rate of the set
\[A_{n}(\tilde \varphi,K)=\left\{(\omega,x)\in\Lambda\colon %\frac{1}{n}S_n\psi (\omega,x)
\tilde \varphi(\delta_{(\omega,x)}^{n})\in K\right\}.\]
Recall the definition of $I_\Sigma$ in \eqref{r-f}, and
define a rate function $I_{\Sigma,\tilde \varphi}\colon\mathbb R\to[0,\infty]$ by
\[I_{\Sigma,\tilde \varphi}(\alpha)=\inf\left\{I_\Sigma(\nu)\colon\nu\in\mathcal M(\Sigma),\ \tilde \varphi(\nu\circ\pi^{-1})=\alpha \right\}.\]

\begin{prop}\label{level-1LDP}
Let
 $T_1,\ldots,T_N$ be
  non-uniformly expanding Markov maps on $X$ generating a nice, topologically mixing skew product Markov map.
Let $\tilde \varphi\colon\mathcal M(\Lambda)\to\mathbb R$ be acceptable.
For any closed subset $K$ of $\mathbb R$ we have
\[\limsup_{n\to\infty}\frac{1}{n}\log \mu_{n}(A_n(\tilde \varphi,K))\leq -\inf_K I_{\Sigma,\tilde \varphi}.\]
\end{prop}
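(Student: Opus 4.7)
The plan is to reduce the claim to the level-2 large deviations upper bound \eqref{ldupest} for the sequence $(\tilde\nu_n)_{n=1}^\infty$ on $\mathcal M(\Sigma)$, by pushing the problem up to the symbolic space $\Sigma$ through the coding map $\pi$. First I would introduce the companion measure $\nu_n$ on $\Sigma$ given by
\[\nu_n=\Big(\sum_{\underline a\in{\rm Fix}(\sigma^n)}\exp S_n\phi(\underline a)\Big)^{-1}\sum_{\underline a\in{\rm Fix}(\sigma^n)}\exp S_n\phi(\underline a)\,\delta_{\underline a},\]
so that $\tilde\nu_n=\nu_n\circ e_n^{-1}$ where $e_n(\underline a)=\delta_{\underline a}^n$. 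Using the identity $\exp S_n\phi(\underline a)=Q_p(\omega_1\cdots\omega_n)|(T_\omega^n)'x|^{-1}$ for $\pi(\underline a)=(\omega,x)$, the fact that $\pi$ is at most two-to-one, and assumption (A3) that ${\rm Fix}(R^n)\subset\pi({\rm Fix}(\sigma^n))$, the same bookkeeping as in Lemma~\ref{relation} (up to the ratio $Z_n^\Sigma/Z_{p,n}\in[1,2]$) yields $\mu_n(B)\leq 2\,\nu_n\circ\pi^{-1}(B)$ for every Borel subset $B\subset\Lambda$.

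Next I would pass from level 1 to level 2 on the symbolic side. The set
\[\mathcal C=\{\nu\in\mathcal M(\Sigma):\tilde\varphi(\nu\circ\pi^{-1})\in K\}\]
is closed in $\mathcal M(\Sigma)$: by the definition of an acceptable function, $\nu\mapsto\tilde\varphi(\nu\circ\pi^{-1})$ is continuous, and $K$ is closed by hypothesis. The key pointwise identity is that for $\underline a\in{\rm Fix}(\sigma^n)$ with $\pi(\underline a)=(\omega,x)\in{\rm Fix}(R^n)$, one has $\pi_*(\delta_{\underline a}^n)=\delta_{(\omega,x)}^n$; this follows from the semi-conjugacy $R\circ\pi=\pi\circ\sigma$ applied along the (coded) periodic orbit, which is legitimate precisely because (A3) places the whole orbit of $(\omega,x)$ in $\pi(\Sigma)$. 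Consequently $\pi^{-1}(A_n(\tilde\varphi,K))\cap{\rm Fix}(\sigma^n)\subset\{\underline a\in{\rm Fix}(\sigma^n):\delta_{\underline a}^n\in\mathcal C\}$, so the previous step gives
\[\mu_n(A_n(\tilde\varphi,K))\leq 2\,\nu_n\big(\{\underline a:\delta_{\underline a}^n\in\mathcal C\}\big)=2\,\tilde\nu_n(\mathcal C).\]

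Applying the large deviations upper bound \eqref{ldupest} to the closed set $\mathcal C$, together with the obvious identification
\[\inf_{\mathcal C}I_\Sigma=\inf\{I_\Sigma(\nu):\tilde\varphi(\nu\circ\pi^{-1})\in K\}=\inf_{\alpha\in K}I_{\Sigma,\tilde\varphi}(\alpha),\]
then yields
\[\limsup_{n\to\infty}\frac{1}{n}\log\mu_n(A_n(\tilde\varphi,K))\leq\limsup_{n\to\infty}\frac{1}{n}\log\tilde\nu_n(\mathcal C)\leq-\inf_K I_{\Sigma,\tilde\varphi},\]
as required.

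The main obstacle I anticipate is the justification of $\pi_*(\delta_{\underline a}^n)=\delta_{(\omega,x)}^n$ along the periodic orbit, since $R\circ\pi=\pi\circ\sigma$ is only guaranteed on $\Sigma\setminus\pi^{-1}(E)$; one must check that, by virtue of (A3) and the fact that iterates of a periodic point of $R$ remain periodic points (hence coded), the identity does extend orbit-wise. Everything else is routine, paralleling the passage from Lemma~\ref{relation} to Proposition~\ref{annealed-ldpup}.
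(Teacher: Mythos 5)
Your proof is correct and follows essentially the same route as the paper's: reduce to the symbolic side via the at-most-two-to-one bound of Lemma~\ref{relation}, identify $\mu_n(A_n(\tilde\varphi,K))$ with $\tilde\nu_n$ of the closed set $\{\nu:\tilde\varphi(\nu\circ\pi^{-1})\in K\}$, and apply the upper bound \eqref{ldupest} (the paper phrases this last step as the contraction principle, which is the same computation as your identification $\inf_{\mathcal C}I_\Sigma=\inf_K I_{\Sigma,\tilde\varphi}$). The point you flag about $\pi_*(\delta_{\underline a}^n)=\delta_{(\omega,x)}^n$ is indeed used implicitly by the paper in its identity \eqref{1LDP-1}, and your justification via (A3) is the right one.
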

\begin{proof}
%Since $\pi_*\xi_n=\nu_n$, 
%If $\underline{z}\in {\rm Fix}(\sigma^n)$ and $(\omega,x)=\pi(\underline{z})$ then  $S_n\psi(\omega,x)=\sum_{i=0}^{n-1}\psi(\pi(\sigma^i\underline{z}))$.
%Conversely, 
We proceed much in parallel to the proof of Proposition~\ref{annealed-ldpup},
comparing $\mu_n$ in \eqref{mun} and a Borel probability measure
 $\nu_n$ on $\Sigma$ given by
\begin{equation}\label{nun}\nu_n=\left(\sum_{\underline{a}\in{\rm Fix}(\sigma^n)} \exp S_n\phi(\underline{a})\right)^{-1}\sum_{\underline{a}\in{\rm Fix}(\sigma^n)} \exp S_n\phi(\underline{a})\delta_{\underline{a}}.\end{equation}
%Note that $\nu_n=\eta_n\circ\pi^{-1}$. 
Recall the definition of $\tilde\nu_n$ in \eqref{nutilde}, and
notice the identity
\begin{equation}\label{1LDP-1}\tilde\nu_n\left\{\nu\in\mathcal M(\Sigma)\colon \tilde \varphi(\nu\circ\pi^{-1})\in K\right\}=\nu_n\left\{\underline{a}\in\Sigma\colon \pi(\underline{a})\in A_{n}(\tilde \varphi,K)\right\}.\end{equation}
Since %$\psi\circ\pi$ 
$\tilde \varphi$ is acceptable and $K$ is closed,
the contraction principle \cite{Ell85} applied to the upper bound \eqref{ldupest} yields
\begin{equation}\label{1LDP-2}\limsup_{n\to\infty}\frac{1}{n}\log  \tilde\nu_n\left\{\nu\in\mathcal M(\Sigma)\colon \tilde \varphi(\nu\circ\pi^{-1})\in K\right\}\leq -\inf_{K} I_{\Sigma,\tilde\varphi}.
\end{equation}
Plugging \eqref{1LDP-1} into the left-hand side of \eqref{1LDP-2} 
%and therefore
%\[\limsup_{n\to\infty}\frac{1}{n}\log  \nu_n\left\{\underline{a}\in\Sigma\colon \pi(\underline{a})\in A_{n}(\Theta,K)\right\}\leq -\inf_{K} I_{\Sigma,\tilde\varphi}.\]
and 
combining the result with the inequality 
 $
\mu_{n}(A_n(\tilde \varphi,K))\leq
 2\nu_n\left\{\underline{a}\in\Sigma\colon \pi(\underline{a})\in A_{n}(\tilde \varphi,K)\right\}$ that can be shown 
as in the proof of Lemma~\ref{relation}, we obtain the desired inequality.
\end{proof}
\subsection{Samplewise level-1 upper bound}\label{sample-1}
For each $\omega\in\Omega$, define 
a Borel probability measure $\mu_{\omega,n}$ on $X$ by
\begin{equation}\label{muomegan}\mu_{\omega,n}=\frac{1}{Z_{\omega,n}}\sum_{x\in{\rm Fix}(T_{\omega}^n) }|(T_{\omega}^n)'x|^{-1}\delta_{x}\quad(n=1,2,\ldots),\end{equation}
which is a samplewise version of $\mu_n$ in \eqref{mun}.
Notice the identities
\[\frac{1}{Z_{\omega,n}}\sum_{x\in{\rm Fix}(T^n_\omega)}|(T_{\omega}^n)'x|^{-1}\tilde \varphi(\delta^n_{(\omega,x)})=\int\tilde \varphi(\delta^n_{(\omega,x)})d\mu_{\omega,n}(x)=\int\tilde \varphi d\tilde\mu^\omega_n.\]
We are going to evaluate the integral in the middle.
For a subset  $K$ of $\mathbb R$,
we denote by
$A_{\omega,n}(\tilde \varphi,K)$ the set
$\{x\in X\colon (\omega,x)\in A_{n}(\tilde \varphi,K)\},$
called {\it the $\omega$-section} 
of $A_{n}(\tilde \varphi,K)$.

\begin{lemma}\label{level-1}
For $m_p$-almost every $\omega\in\Omega$ and any closed subset $K$ of $[\inf\tilde \varphi,\sup\tilde \varphi]$,
\[\limsup_{n\to\infty}\frac{1}{n}\log \mu_{\omega,n}(A_{\omega,n}(\tilde \varphi,K))\leq -\inf_{K} I_{\Sigma,\tilde\varphi}.\]
\end{lemma}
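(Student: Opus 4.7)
The plan is to transpose the argument of Proposition~\ref{ldpup-q} from level-2 to level-1, with $\tilde\mu_n$ replaced by $\mu_n$, $\tilde\mu_n^\omega$ replaced by $\mu_{\omega,n}$, and the closed set $\mathcal{C}\subset\mathcal{M}(\Lambda)$ replaced by $A_{n}(\tilde\varphi,K)\subset\Lambda$. The starting observation is the identity
\[\mu_{\omega,n}(A_{\omega,n}(\tilde\varphi,K))=\frac{1}{Z_{\omega,n}}\sum_{\stackrel{x\in{\rm Fix}(T_\omega^n)}{\tilde\varphi(\delta_{(\omega,x)}^n)\in K}}|(T_\omega^n)'x|^{-1}.\]
Combining this with the decomposition \eqref{zn} and Lemma~\ref{b-d} exactly as in the proof of Proposition~\ref{ldpup-q}, I obtain
\[\mu_n(A_n(\tilde\varphi,K))\geq e^{-2c_n}\int\mu_{\omega,n}(A_{\omega,n}(\tilde\varphi,K))\,dm_p(\omega).\]

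Next, I will first fix the closed set $K$ and show the bound for $m_p$-almost every $\omega$. For $\epsilon\in(0,1)$, set
\[\Omega_{\epsilon,n}^K=\left\{\omega\in\Omega\colon\mu_{\omega,n}(A_{\omega,n}(\tilde\varphi,K))\geq\exp\bigl(-n(1-\epsilon)\inf_KI_{\Sigma,\tilde\varphi}\bigr)\right\}.\]
By Markov's inequality and the previous display,
\[m_p(\Omega_{\epsilon,n}^K)\leq e^{2c_n}\exp\bigl(n(1-\epsilon)\inf_KI_{\Sigma,\tilde\varphi}\bigr)\mu_n(A_n(\tilde\varphi,K)).\]
Proposition~\ref{level-1LDP} yields exponential decay of $\mu_n(A_n(\tilde\varphi,K))$ at rate $\inf_KI_{\Sigma,\tilde\varphi}$, while $c_n=o(n)$. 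Hence $m_p(\Omega_{\epsilon,n}^K)$ decays exponentially, and Borel-Cantelli provides a full-measure set $\Omega^K$ on which the desired inequality holds.

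The main obstacle is that the full-measure set $\Omega^K$ constructed above depends on $K$, whereas the lemma requires a single exceptional null set working for all closed $K$ simultaneously. I resolve this by a countable approximation. First, the contraction principle (applicable since $\nu\mapsto\tilde\varphi(\nu\circ\pi^{-1})$ is continuous by acceptability and $I_\Sigma$ is lower semicontinuous by Proposition~\ref{annealed-ldpup}) gives that $I_{\Sigma,\tilde\varphi}$ is lower semicontinuous on $\mathbb R$. Let $\mathcal K$ denote the countable collection of all closed intervals in $[\inf\tilde\varphi,\sup\tilde\varphi]$ with rational endpoints, and take
\[\Omega^\star=\bigcap_{\epsilon\in\mathbb Q\cap(0,1)}\bigcap_{K'\in\mathcal K}\Omega_{\epsilon}^{K'},\]
which has full $m_p$-measure. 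Fix $\omega\in\Omega^\star$ and an arbitrary closed $K\subset[\inf\tilde\varphi,\sup\tilde\varphi]$. Given any open neighborhood $G\supset K$, compactness of $K$ allows us to cover it by finitely many intervals $K_1,\ldots,K_s\in\mathcal K$ with $\bigcup_j K_j\subset G$. Since $A_{\omega,n}(\tilde\varphi,K)\subset\bigcup_j A_{\omega,n}(\tilde\varphi,K_j)$, subadditivity gives
\[\limsup_{n\to\infty}\frac{1}{n}\log\mu_{\omega,n}(A_{\omega,n}(\tilde\varphi,K))\leq\max_{1\leq j\leq s}\Bigl(-\inf_{K_j}I_{\Sigma,\tilde\varphi}\Bigr)\leq -\inf_GI_{\Sigma,\tilde\varphi}.\]
Shrinking $G$ to $K$ and invoking lower semicontinuity of $I_{\Sigma,\tilde\varphi}$ yields the claimed bound, completing the proof.
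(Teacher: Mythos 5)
Your proof is correct and follows essentially the same route as the paper: the same integral inequality relating $\mu_n$ to the samplewise measures $\mu_{\omega,n}$ via Lemma~\ref{b-d}, the same Markov-inequality/Borel--Cantelli step driven by Proposition~\ref{level-1LDP}, and the same reduction from a fixed closed $K$ to all closed $K$ via a countable family of rational intervals together with the lower semicontinuity of $I_{\Sigma,\tilde\varphi}$ (a step the paper only sketches by reference to the parallel argument in Proposition~\ref{ldpup-q}). The one point you leave implicit is the pair of degenerate cases $\inf_K I_{\Sigma,\tilde\varphi}=0$ (where the bound is vacuous) and $\inf_K I_{\Sigma,\tilde\varphi}=+\infty$ (where one runs the same Markov/Borel--Cantelli argument with an arbitrary constant $M$ in place of $(1-\epsilon)\inf_K I_{\Sigma,\tilde\varphi}$); the paper disposes of these with the remark that one may assume $0<\inf_K I_{\Sigma,\tilde\varphi}<\infty$.
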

\begin{proof}
%We show this corollary in the same way as in the proof of Proposition 3.14 in \cite{ANV15}. 
We proceed in parallel to the proof of Proposition~\ref{ldpup-q}.
Since the interval $[\inf\tilde \varphi,\sup\tilde \varphi]$ is compact, metrizable %separable and metrizable, and since 
and $I_{\Sigma,\tilde\varphi}$ is lower semicontinuous,
it suffices to show that
for each closed subset $K$ of this interval there exists a Borel set $\Omega_K$ with full $m_p$-measure such that for all $\omega\in\Omega_{K}$, 
\begin{equation}\label{ldpup-lem@}\limsup_{n\to\infty}\frac{1}{n}\log \mu_{\omega,n}(A_{\omega,n}(\tilde \varphi,K))\leq -\inf_{K} I_{\Sigma,\tilde\varphi}.
\end{equation}
In what follows we may assume $0<\inf_{K}I_{\Sigma,\tilde\varphi}<\infty$.
%for otherwise \eqref{ldpup-lem@} clearly holds.
From the definitions of $\mu_n$, $\mu_{\omega,n}$ in \eqref{mun}, \eqref{muomegan} and Lemma~\ref{b-d},
for any Borel subset $B$ of $\Lambda$ we have
\[\begin{split}
\mu_n(B)&=\frac{1}{Z_{p,n}}\sum_{(\omega,x)\in {\rm Fix}(R^n)}Q_p(\omega_1\cdots\omega_n) |(T_\omega^n)'x|^{-1}\delta_{(\omega,x)}(B) \\
&=\int \sum_{x\in{\rm Fix}(T_{\omega}^n)\cap B_\omega}|(T_{\omega}^n)'x|^{-1} dm_p(\omega) \Big/ \int Z_{\omega',n}  dm_p(\omega') \\
&=\int \mu_{\omega,n}(B_\omega)\left(Z_{\omega,n} \Big/ \int_\Omega Z_{\omega',n} dm_p(\omega')\right)dm_p(\omega)\\
&\geq e^{-2c_n}\int\mu_{\omega,n}(B_\omega)\ dm_p(\omega),
\end{split}\]
where $B_\omega$ denotes the $\omega$-section of $B$.
%$\{x\in X\colon(\omega,x)\in B\}$. 
For $\epsilon\in(0,1)$ and  $n\geq1$ we set 
\[\Omega_{\epsilon,n}=\left\{\omega\in\Omega\colon\mu_{\omega,n}(A_{\omega,n}(\tilde \varphi,K))\geq \exp\left(-n(1-\epsilon)\inf_K I_{\Sigma,\tilde\varphi}\right)\right\}.\]
Then Markov's inequality yields
\[\begin{split}
m_p(\Omega_{\epsilon,n})
&\leq \exp\left(n(1-\epsilon)\inf_K I_{\Sigma,\tilde\varphi}\right)\int\mu_{\omega,n}(A_{\omega,n}(\tilde \varphi,K))d m_p(\omega) \\
&\leq e^{2c_n}\exp\left(n(1-\epsilon)\inf_K I_{\Sigma,\tilde\varphi}\right)\mu_{n}(A_n(\tilde \varphi, K)).
\end{split}\]
By Proposition~\ref{level-1LDP}, $m_p(\Omega_{\epsilon,n})$
decays exponentially as $n$ increases. 
From Borel-Cantelli's lemma we obtain 
\eqref{ldpup-lem@} for $m_p$-almost every $\omega\in\Omega$. 
%The proof of Lemma~\ref{level-1} is complete.
\end{proof}

\subsection{Proofs of Theorem~C and Corollary~\ref{cor-10}}\label{est2}
We put $\alpha_0=\tilde \varphi(m_p\otimes \lambda_p).$ By 
the lower semi-continuity of $I_{\Sigma,\tilde\varphi}$, the compactness of $\mathcal M(\Sigma)$ and Lemma~\ref{minimizer}, $I_{\Sigma,\tilde\varphi}(\alpha)=0$ holds if and only if $\alpha=\alpha_0$. By Lemma~\ref{level-1}, for $m_p$-almost every $\omega\in\Omega$ and any $\epsilon>0$ there exists $\alpha(\omega,\epsilon)>0$
such that for all sufficiently large $n\geq 1$ we have
\[\begin{split}\mu_{\omega,n}(A_{\omega,\epsilon})&=
\frac{1}{Z_{\omega,n}}\sum_{x\in{\rm Fix}(T^n_\omega)\cap A_{\omega,\epsilon}}|(T_{\omega}^n)'x|^{-1}\leq e^{-\alpha(\omega,\epsilon) n},\end{split}\]
where
$A_{\omega,\epsilon}=\{x\in X\colon |\tilde \varphi(\delta_{(\omega,x)}^{n} )-\alpha_0|\geq\epsilon\},$
%For any $n\geq1$ and $x\in{\rm Fix}(T^n_\omega)$, $|(1/n)S_n\psi(\omega,x)|\leq \sup |\psi|<\infty$.
and further
\[\begin{split}\left|\int \tilde \varphi(\delta_{(\omega,x)}^{n} )d\mu_{\omega,n}(x)-\alpha_0\right|
\leq&\frac{1}{Z_{\omega,n}}\sum_{x\in{\rm Fix}(T_\omega^n)}|(T_\omega^n)'x|^{-1}
\left|\tilde \varphi(\delta_{(\omega,x)}^{n} )-\alpha_0\right|\\
%=&\frac{1}{Z_{\omega,n}}\sum_{x\in{\rm Fix}(T_\omega^n)\setminus A_{\omega,\epsilon}}
%|(T_\omega^n)'x|^{-1}\left|
%\Theta(\delta_{(\omega,x)}^{n} )-\alpha_0\right|\\
%&+\frac{1}{Z_{\omega,n}}\sum_{x\in{\rm Fix}(T_\omega^n)\cap A_{\omega,\epsilon}}|(T_\omega^n)'x|^{-1}\left|\Theta(\delta_{(\omega,x)}^{n} )-\alpha_0\right|\\
\leq&\epsilon\mu_{\omega,n}(X\setminus A_{\omega,\epsilon})+\mu_{\omega,n}(A_{\omega,\epsilon})(\sup|\tilde \varphi|+\alpha_0)\\
\leq &\epsilon+e^{-\alpha(\omega,\epsilon) n}(\sup|\tilde \varphi|+\alpha_0).
\end{split}\]
Since $\epsilon>0$ is arbitrary, we obtain
$\lim_{n\to\infty}\int \tilde \varphi(\delta_{(\omega,x)}^{n} )d\mu_{\omega,n}(x)=\alpha_0$, which completes the proof of Theorem~C.
%\[\frac{1}{\sum_{x\in{\rm Fix}(T_\omega^n)}|(T_\omega^n)'x|^{-1}}\sum_{x\in{\rm Fix}(T_\omega^n)\setminus A_{\omega,\epsilon}}|(T_\omega^n)'x|^{-1}|\frac{1}{n}S_n\psi-\alpha_0|+\sum_{x\in{\rm Fix}(T_\omega^n)\cap A_{\omega,\epsilon}}|(T_\omega^n)'x|^{-1}|\frac{1}{n}S_n\psi-\alpha_0|\]
Applying Theorem~C to acceptable functions
 $\mu\mapsto\int \varphi d\mu/\int \psi d\mu$,
 $\mu\mapsto\int \varphi d\mu\int \psi d\mu$,
 $\mu\mapsto\int g d(\mu\circ \pi_1^{-1}*\mu\circ \pi_2^{-1})$
 on $\mathcal M(\Lambda)$ completes the proof of Corollary~\ref{cor-10}. \qed

\section{Examples of application}
In this section %, which is last but not the least, 
we give examples to which our main results apply.
In Section~\ref{u-exp} we remark that our results apply to random dynamical systems generated by uniformly expanding Markov maps.
In Section~\ref{random-beta} we take up such a random dynamical system 
introduced in \cite{DK03}, which
generates series expansions of real numbers with non-integer bases.
In Sections~\ref{average1}, \ref{average2} and \ref{ave-mean}
we apply Theorem~C and Corollary~\ref{cor-10} to this system, and 
%from Corollary~\ref{cor-10} 
obtain almost-sure convergences of
time averages of digital quantities
in the expansions of random cycles. 

For random dynamical systems
generated by non-uniformly expanding Markov maps 
having neutral fixed points, the verification of (A4) is an issue.
In Section~\ref{verifya2}, we verify (A4) for maps with common (neutral) fixed points
using the thermodynamic formalism for countable Markov shifts \cite{MauUrb03}.
In Section~\ref{conv-sub-sec} we show that a weighted equidistribution of random cycles still holds even if the uniqueness of equilibrium state fails.
In Section~\ref{@} we discuss in which case (A4) holds in more interesting examples %we discuss 
%connections of our results with 
%random dynamical systems 
generated by 
maps with common 
neutral fixed points introduced in
\cite{LSV99}.
Finally in Section~\ref{extension}, we discuss some future perspectives on extensions of the results of this paper.

%\subsubsection*{I. Families of uniformly expanding maps}\label{ex1}
\subsection{Uniformly expanding maps}\label{u-exp}
 Let $T_1,\ldots,T_N$ be uniformly expanding Markov maps
 of class $C^{1+1}$ on $X$.
 %and 
 %let $p=(p_1,\ldots,p_N)$ be a probability vector with $\prod_{i=1}^N p_i\neq0$. 
 Then Pelikan's condition 
 \begin{equation}\label{pelikan}\sup_{x\in X}\sum_{i=1}^N\frac{p_i}{|T_i'(x)|}<1\end{equation}
in \cite[Theorem~1]{Pel84} holds, and hence there exists a stationary measure that is absolutely continuous with respect to ${\rm Leb}$.
If (A2) holds,
such a stationary measure is unique, denoted by $\lambda_p$. If moreover (A1) holds, then (A4) follows from the thermodynamic formalism for topological Markov shifts over finite alphabet \cite{Bow75,Rue04}:
the random geometric potential $\phi$ is H\"older continuous with respect to the shift metric, and the shift-invariant measure $(m_p\otimes\lambda_p)\circ\pi$ is a Gibbs state for the potential $\phi$, and hence it is the unique equilibrium state for the potential $\phi$.
%\sum_{n=1}^\infty D_n(\phi)<\infty$, and in particular (A4) holds. 
 
 In this case, (A3) is not actually necessary. Indeed, for $n\geq1$ and $\omega\in\Omega$
let $E_\omega^n$ denote the set of $x\in{\rm Fix}(T_\omega^n)$ such that $(\omega',x)\notin \pi(\Sigma)$
where $\omega'\in\Omega$ is the repetition of $\omega_1\cdots\omega_n$.
Since $\mathcal A$ is a finite set,
 $\sup_{\omega\in\Omega}\sup_{n\geq1}\#E_\omega^n/n$ is bounded from above.
By Lemma~\ref{VP} and the assumption that all the maps are uniformly expanding,
for all $\omega\in\Omega$
we have \[\lim_{n\to\infty}\frac{\sum_{x\in E_\omega^n } |(T_\omega^n)'x|^{-1}}{\sum_{x\in {\rm Fix}(T_\omega^n)} |(T_\omega^n)'x|^{-1}}=0.\]
Therefore, random cycles in $\bigcup_{\omega\in\Omega}\bigcup_{n=1}^\infty E_\omega^n$ do not affect all the previous estimates.

\subsection{Random $\beta$-expansion}\label{random-beta}
% As a particular example,
 %of families of uniformly expanding maps, 
% we consider the random $\beta$-transformation \cite{DK03}.
%We now treat a concrete example.
Let $\beta>1$ be a non-integer. 
A {\it greedy map} $T_1$ 
and a {\it lazy map} $T_2$
are maps from a closed interval 
 $X=[0,\lfloor \beta\rfloor/(\beta-1)]$ to itself given 
by
\[T_1(x)=
\begin{cases}
\beta x -\lfloor\beta x\rfloor & x\in[0,1), \\
\beta x -\lfloor\beta\rfloor & x\in\bigl[1,\lfloor\beta\rfloor/(\beta-1)\bigr],
\end{cases}\quad\text{and}\quad T_2(x)=u\circ T_1\circ u^{-1}(x),\]
%given by  
%\[l_{\beta}(x)=\lfloor\beta\rfloor/(\beta-1)-x,\]
%by \[T_2(x)=u\circ T_1\circ u^{-1}(x),\]
respectively,
where $u:x\in X\mapsto \lfloor\beta\rfloor/(\beta-1)-x\in X$
and $\lfloor \cdot \rfloor$ denotes the floor function.
The restriction $T_1|_{[0,1)}$ is the 
$\beta$-transformation introduced by R\'enyi \cite{R}. See FIGURE~2 for the case $1<\beta<2$.
%The skew product map 
The skew product map $R$ in \eqref{R-def} 
generated by $T_1$ and $T_2$ is called {\it the random $\beta$-transformation} \cite{DK03}.

The random $\beta$-transformation provides series expansions of numbers in $K$. 
We define integer-valued digit functions $e_1$, $e_2$ on $X$
%\colon X\to\mathbb N\cup\{0\}$ and $e_2\colon X\to\mathbb N\cup\{0\}$ %on $K$ 
by 
\[e_1(x)
=\begin{cases}
\lfloor\beta x\rfloor & %\text{ if }
x\in\bigl[0,\frac{\lfloor\beta\rfloor}{\beta} \bigr), \\
\lfloor\beta\rfloor & %\text{ if } 
x\in\bigl[ \frac{\lfloor\beta\rfloor}{\beta}, \frac{\lfloor\beta\rfloor}{\beta-1}\bigr],
\end{cases}\quad\text{and}\quad  
e_2(x)=\lfloor\beta\rfloor-e_1(u(x)).\]
For $(\omega,x)\in\Lambda$ and $n\geq1$ we have
\begin{equation}\label{digit-eq}T_{\omega}^{n-1}(x)=\frac{e_{\omega_n}(T_{\omega}^{n-1}(x))}{\beta}
+\frac{T_{\omega}^{n}(x)}{\beta}.\end{equation}
Using \eqref{digit-eq} recursively, we obtain a $\beta$-expansion
\begin{equation}\label{beta-exp}x=\sum_{n=1}^{\infty}\frac{e_{\omega_n}(T_{\omega}^{n-1}(x))}{\beta^n},\end{equation}
%for each $\omega=(\omega_{n})_{n=1}^{\infty}\in\Omega$
% We call the right hand side of the above equality 
called {\it the random $\beta$-expansion of $x$ with respect to $\omega$}. 

Each number in $X$ can have an infinite number of $\beta$-expansions \cite[Theorem~1]{Er}.
The random $\beta$-transformation 
generates all possible $\beta$-expansions \cite[Theorem~2]{Da}: 
for any $\beta$-expansion of $x\in X$ there exists $\omega\in\Omega$ such that %the digit 
%coefficient 
%sequence of 
the random $\beta$-expansion of $x$ with respect to $\omega$ coincides with 
%that of 
the given $\beta$-expansion of $x$. 
The $\beta$-expansion of $x\in X$ generated by the single map $T_1$ is called {\it the greedy expansion of $x$}.
The name {\it greedy} comes from the property that the digit sequence in the greedy expansion of a given number is the largest one,
in the lexicographical order on $\{0,1,\dots, \lfloor\beta\rfloor\}^{\mathbb N}$,
among all possible digit sequences in the $\beta$-expansion of that number \cite[Theorem 1]{Da}.
%By the definition of the random $\beta$-expansion, 
%For each $x\in J_{\beta}$, 
The meaning of the name {\it lazy} is analogous.
The random $\beta$-expansion with respect to $\omega=111\cdots$ coincides with the greedy $\beta$-expansion. 
%of $x\in J_{\beta}$.
%the random $\beta$-expansion with respect to $\omega^{(1)}=111\cdots$ is the greedy expansion \cite[@]{Da}: the expansion whose digit %coefficient
%sequence is the largest one in the lexicographical order on $\{0,1,\dots, \lfloor\beta\rfloor\}^{\mathbb N}$.

\begin{figure}
\begin{center}
\includegraphics[height=4cm,width=9.2cm]{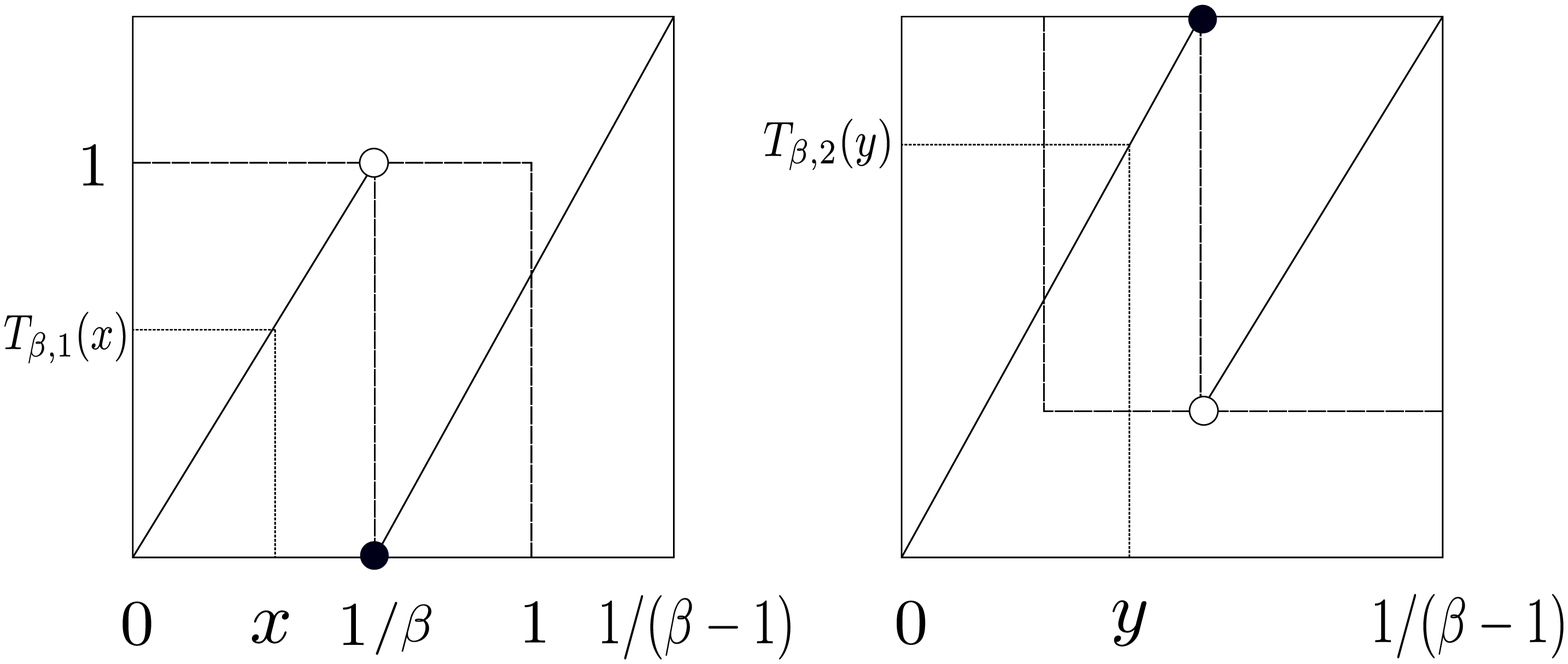}
\caption
{The greedy map $T_1$ (left) and the lazy map $T_2$ (right) for $1<\beta<2$.}
 %\textcolor{red}{Unnecessary bar near $1/(\beta-1)$.}
\end{center}
\end{figure}

We say {\it the greedy $\beta$-expansion of $x\in X$ is finite} if %there exists $n_0\geq1$ such that for all integer $n\geq n_0$, the $n$-th %coefficient 
all but finitely many 
digits in the greedy expansion of $x$ are $0$.
From the result of Dajani and de Vries \cite[Section~4]{Da},
if the greedy $\beta$-expansion of $1$ is finite, then $T_{1}$ and $T_{2}$ are uniformly expanding Markov maps 
%with Markov partitions 
satisfying (A1) (A2). 
%\textcolor{red}{Remark that we can ignore (A3) in this setting as stated above since the maps $T_1$ and $T_2$ are uniformly expanding, even though there is an element of the Markov partition given in \cite[Section~4]{Da} whose boundary includes a periodic point of the skew-product map $R$.}
%Dajani and de Vries \cite{Da} constructed a finite partition $\Lambda$ of the interval $J_{\beta}$ such that for each element of $\Lambda$ is a subinterval of $J_{\beta}$ and its images by $T_1$ and $T_2$ are finite unions of some elements of $\Lambda$ except finite points. By using this partition, we can construct a Markov partition of the random $\beta$-transformation as a collection of all product sets having the form $\Omega\times J$, where $J$ is an element of $\Lambda$. For such a product set, we can easily see that there is a positive integer $n$ such that $R_{\beta}^n(\Omega\times J)=\Omega\times J_{\beta}$, which ensures that  (A2) holds. 
%Together with the fact that the maps $T_1$ and $T_2$ are uniformly expanding, 
Hence, for any positive probability vector $p=(p_1,p_2)$ the random $\beta$-transformation has a unique stationary measure that is absolutely continuous with respect to the normalized
restriction of the Lebesgue measure to $X$, denoted by $\nu$.
%\textcolor{red}{what is this measure? normalized?}
%In fact, 
The density of this measure, denoted by $h_{p_1}$, is explicitly given in \cite{Suz19} in terms of the random orbit $\{T_\omega^n(1)\}_{n=0}^\infty$.
%though the general formula is complicated.
For general results, see \cite{KalMag20}. 
%\textcolor{blue}{Note that such an explicit formula for the density function of a stationary measure is given in \cite{KalMag20} for more general setting.} 
%for any $\beta$ and any $p\in(0,1)$ 
%, although it has a complex form in general (see \cite{Suz19}).

\subsection{Average relative frequency of digits}\label{average1}
  Borel's normal number theorem states that Lebesgue almost every real number has the property
       that the limiting relative 
       frequency of each digit in the decimal expansion is $1/10$. 
       Although Borel did not use ergodic theory in his original proof, 
       this is a consequence of Birkhoff's ergodic theorem.
       An analogue of Borel's theorem for the 
       the sequence $\{e_{\omega_n}(T_\omega^{n-1}(x))\}_{n=1}^\infty$
in the random $\beta$-expansion \eqref{beta-exp} 
is a consequence of Birkhoff's ergodic theorem applied to the skew product map $R$ and its invariant probability measure $m_{p}\otimes h_{p_1}\nu$
that is ergodic \cite[Theorem~4]{Da2}. 
%No need to frequently remind $K$.
%Kakutani's random ergodic theorem.
%We apply Corollary~\ref{cor-10}(a).
%to compute the mean relative frequency of digits in the random $\beta$-expansions of random cycles.
%together via an explicit formula for the density function $h_{\beta,p}$. 
For $i\in\{0,1,\dots,\lfloor\beta \rfloor\}$
define 
%a partition $\{L_i\}_{r=0}^{\lfloor\beta\rfloor}$ of $K$ by
a subinterval $L_i$ of $X$ by
\[L_i=\begin{cases}
[\frac{i}{\beta}, \frac{i+1}{\beta}) & i=0,1,\dots,\lfloor\beta\rfloor-1, \\
[\frac{\lfloor\beta\rfloor}{\beta}, \frac{\lfloor\beta\rfloor}{\beta-1}] & i=\lfloor\beta\rfloor,
\end{cases}
\]
and put
\begin{equation}\label{qi}q_i=p_1\int_{L_i}h_{p_1} d\nu+p_2\int_{L_{\lfloor\beta\rfloor-i}}h_{p_2} d\nu.\end{equation}
Note that $\sum_{i=0}^{\lfloor\beta\rfloor}q_i=1$.
By the ergodicity of $m_{p}\otimes h_{p_1}\nu$, 
for $m_p$-almost every $\omega=(\omega_k)_{k=1}^\infty\in\Omega$
the relative frequency with which the integer $i$ appears in the random $\beta$-expansion of $x\in X$,
namely the number
\[F_{\omega,n}(i,x)=\frac{1}{n}\#\{1\leq k\leq n\colon e_{\omega_k}(T_{\omega}^{k-1}(x))=i\},\]
converges to $q_i$ as $n\to\infty$ for Lebesgue almost every $x\in X$. 
On relative frequencies of digits in $\beta$-expansions of random cycles,
using Theorem~C and Corollary~\ref{cor-10}(a)
we obtain the following result.

\begin{prop}[Almost-sure convergence of the average relative frequency of digits]\label{thm-beta}
Let $\beta>1$ be a non-integer such that the greedy $\beta$-expansion of $1$ is finite, and let
$p=(p_1,p_2)$ be a positive probability vector.
For $m_p$-almost every sample $\omega\in\Omega$ and all $i=0,1,\dots, \lfloor\beta\rfloor$ we have
\[\label{eq-cor}\begin{split}
    \lim_{n\to\infty}\frac{1}{\#{\rm Fix}(T_{\omega}^n)} \sum_{x\in{\rm Fix} (T_{\omega}^n)}&F_{\omega,n}(i,x)%\frac{1}{n}\#\{1\leq k\leq n\colon e_{\omega_k}(T_{\omega}^{k-1}(x))=r\}
    =q_i.
    %\\
%&=p_1\int_{L_i}h_{p_1} d{\rm Leb}_{K}+p_2\int_{L(\lfloor\beta\rfloor-r)}h_{p_2} d{\rm Leb}_{K}.
\end{split}\]

\end{prop}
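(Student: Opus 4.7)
The plan is to apply Corollary~\ref{cor-10}(a) (with $\psi\equiv1$) to the digit indicator $\varphi_i\colon\Lambda\to\mathbb R$ given by $\varphi_i(\omega,x)=\mathbf 1_{\{e_{\omega_1}(x)=i\}}$. Since $R^k(\omega,x)=(\theta^k\omega,T_\omega^k(x))$ and $(\theta^k\omega)_1=\omega_{k+1}$, we have
\[
\sum_{k=0}^{n-1}\varphi_i(R^k(\omega,x))=\sum_{k=1}^n\mathbf 1_{\{e_{\omega_k}(T_\omega^{k-1}(x))=i\}}=nF_{\omega,n}(i,x),
\]
so this is exactly the quantity being averaged over random cycles in the proposition.

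The key simplification specific to the random $\beta$-transformation is that both $T_1$ and $T_2$ have constant derivative $\beta$ on each branch, whence $|(T_\omega^n)'x|=\beta^n$ identically on $\Lambda$. Consequently $Z_{\omega,n}=\beta^{-n}\#\mathrm{Fix}(T_\omega^n)$, and the weight $|(T_\omega^n)'x|^{-1}/Z_{\omega,n}$ appearing in Corollary~\ref{cor-10}(a) collapses to the uniform weight $1/\#\mathrm{Fix}(T_\omega^n)$ that appears in the proposition.

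Next I verify that $\varphi_i$ is acceptable in the sense of Definition~\ref{function-a}. The natural monotonicity partition of $T_1$ is $\{L_0,\ldots,L_{\lfloor\beta\rfloor}\}$, and the hypothesis that the greedy $\beta$-expansion of $1$ is finite ensures that the Markov partition of $T_1$ refines this; the analogous statement holds for $T_2$ with the partition $\{u(L_{\lfloor\beta\rfloor}),\ldots,u(L_0)\}$. Hence $\varphi_i$ is constant on each set $\varDelta(a)$, so $\varphi_i\circ\pi$ depends only on the first symbol $a_1\in\mathcal A$ and is locally constant, hence continuous, on $\Sigma$. Applying Corollary~\ref{cor-10}(a) we obtain, for $m_p$-almost every $\omega\in\Omega$,
\[
\lim_{n\to\infty}\frac{1}{\#\mathrm{Fix}(T_\omega^n)}\sum_{x\in\mathrm{Fix}(T_\omega^n)}F_{\omega,n}(i,x)=\int\varphi_i\,d(m_p\otimes\lambda_p).
\]

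It remains to identify the right-hand side with $q_i$. Using $\lambda_p=h_{p_1}\nu$ from Section~\ref{random-beta} and Fubini's theorem gives
\[
\int\varphi_i\,d(m_p\otimes\lambda_p)=p_1\int_{L_i}h_{p_1}\,d\nu+p_2\int_{u(L_{\lfloor\beta\rfloor-i})}h_{p_1}\,d\nu.
\]
A short calculation using the conjugacy $T_2=u\circ T_1\circ u^{-1}$ together with the invariance of $\nu$ under $u$ shows that $\lambda_p\circ u^{-1}$ is the unique stationary measure for $(T_1,T_2)$ with probabilities $(p_2,p_1)$, so $\lambda_p\circ u^{-1}=h_{p_2}\nu$ and therefore $h_{p_1}\circ u=h_{p_2}$ $\nu$-a.e.\ on $X$. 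Changing variables via $u$ in the second integral yields $\int_{u(L_{\lfloor\beta\rfloor-i})}h_{p_1}\,d\nu=\int_{L_{\lfloor\beta\rfloor-i}}h_{p_2}\,d\nu$, completing the identification with $q_i$ as defined in \eqref{qi}. The main obstacle is this last step, namely establishing the symmetry $h_{p_1}\circ u=h_{p_2}$; the verification of acceptability and the derivative simplification are essentially bookkeeping.
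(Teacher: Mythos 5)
Your proposal is correct and follows essentially the same route as the paper: the same digit indicator function (the paper's $\psi_i$ equals your $\varphi_i$), the same collapse of the weights via $|(T_\omega^n)'x|=\beta^n$, the same application of Corollary~\ref{cor-10}(a) with $\psi\equiv1$, and the same symmetry $h_{p_1}=h_{p_2}\circ u$ coming from $T_2=u\circ T_1\circ u^{-1}$ to identify the limit with $q_i$. Your explicit check of acceptability and your derivation of the density symmetry via the pushforward $\lambda_p\circ u^{-1}$ are just slightly more detailed versions of steps the paper leaves implicit.
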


 \begin{proof}
 Since 
  $|T_{1}'|=|T_{2}'|=\beta$ we have 
 $Z_{\omega,n}
 =\#{\rm Fix}(T^n_\omega)/\beta^n.$ Define $\psi_i\colon\Lambda\to\{0,1\}$ by
 \[\psi_i(\omega,x)=\begin{cases}
\1_{L_i}(x), & \omega_1=1, \\
\1_{u(L_{\lfloor \beta\rfloor-i})}(x), & \omega_1=2,
 \end{cases}\]
 where $\1_A$ denotes the indicator function of a set $A\subset\mathbb R$.
 Since $e_1(x)=i$ for $x\in L_i$ and $e_2(x)=i$ for $x\in u(L_{\lfloor\beta\rfloor-i})$ we have
 \[\frac{1}{n}\sum_{k=0}^{n-1}\psi_i(R^k(\omega,x))=
 F_{\omega,n}(i,x).\]
    By Corollary~\ref{cor-10}(a), 
 %applied to the functions $\psi_i$ and the constant function $1$,
 the limit in question exists and is equal to $\int \psi_id(m_{p}\otimes h_{p_1}\nu)$. We have
% Applying Theorem~C to the function $\psi_i$ yields
    \[\begin{split}
  % \lim_{n\to\infty}\frac{1}{\#{\rm Fix}(T_{\omega}^n)} &\sum_{x\in{\rm Fix} (T_{\omega}^n)}\frac{1}{n}\#\{1\leq k\leq n\colon d^{\omega_k}(T_{\omega}^{k-1}(x))=r\}\\
     \int \psi_i d(m_{p}\otimes h_{p_1}\nu)
     &=p_1\int\1_{L_i}h_{p_1}d\nu+p_2\int\1_{u(L_{\lfloor\beta\rfloor-i})}h_{p_1}
     d\nu\\
     &=p_1\int_{L_i}h_{p_1}d\nu
     +p_2\int_{u(L_{\lfloor\beta\rfloor-i})}h_{p_2}\circ u d\nu=q_i.
     %p_1\int_{L_i}h_{p_1} d{\rm Leb}_K+p_2\int_{L(\lfloor\beta\rfloor-r)}h_{p_2} d{\rm Leb}_K.
 \end{split}\]
 The second equality follows from
 $h_{p_1}=h_{p_2}\circ u$ by
 the relation $T_{2}=u\circ T_{1}\circ u^{-1}$.
\end{proof}
 
 In some particular cases we can compute the exact value of $q_i$ and hence the limit 
 in Proposition~\ref{thm-beta}.
 For $\beta=(1+\sqrt{5})/2$,
 %$\beta^2-\beta-1=0$, we have 
the greedy expansion of $1$ is finite
since $1=1/\beta+1/\beta^2$.
 By \cite[Theorem~4.4]{Suz19} %for $p_1\in(0,1)$ 
 we have 
%an easier form of the density like as 
\[h_{p_1}=\frac{\beta}{3-\beta}\Bigl((1-p_1)\beta {\1}_{[0,1/\beta]}+{\1}_{[1/\beta,1]}+p_1 \beta {\1}_{[1,1/(\beta-1)]}\Bigr).\]
Substituting this into the right-hand side of \eqref{qi} 
 shows that 
\[q_i=
%\lim_{n\to\infty}\frac{1}{\#{\rm Fix}(T_{\omega}^n)} \sum_{x\in{\rm Fix} (T_{\omega}^n)}\frac{1}{n}\#\{1\leq k\leq n\colon e_{\omega_k}(T_{\omega}^{k-1}(x))=r\}=
\frac{1}{2}\left(1+\frac{2p_{i+1}-1}{\sqrt{5}}\right)\ \text{ for }i=0,1.\]

\subsection{Average symmetric mean of digits}\label{average2}
The {\it (second) symmetric mean} of a finite sequence $\{a_k\}_{k=1}^n$ of real numbers
is the quantity
\[S(a_1,\ldots,a_n)=\frac{2}{n(n-1)}\sum_{1\leq i<j\leq n}a_i a_j.\]
Symmetric means of digits in expansions of numbers are interesting quantities to look at. 
Their limiting behaviors %and other related quantities 
for the regular continued fraction expansion were investigated by Cellarosi et al. \cite{CHSJ15}. 

We consider symmetric means of digits in
the random $\beta$-expansion.
Set $\pi_e(\omega,x)=e_{\omega_1}(x)$ for $(\omega,x)\in\Lambda$.
By Birkhoff's ergodic theorem,
%By the ergodicity 
%$m_{p_1}\times h_{p_1}\nu$ relative to $R$, 
  for $m_{p}\otimes h_{p_1}\nu$-almost all $(\omega,x)\in\Lambda$
   we have
\[\begin{split}
\lim_{n\to\infty}
S(e_{\omega_1}(x),\ldots, e_{\omega_n}(x))
&=\lim_{n\to\infty}\frac{1}{n(n-1)}\left(\sum_{k=1}^n e_{\omega_k}(x)\right)^2 \\
&=\Bigl(\int_{\Omega\times K}\pi_e(\omega, x)d(m_{p}\otimes h_{p_1}\nu)\Bigr)^2  =\left(\sum_{i=0}^{\lfloor\beta\rfloor}iq_i\right)^2.\end{split}\]
This convergence does not imply
the convergence of the average of symmetric means of digits in $\beta$-expansions of random cycles. Applying Theorem~C and Corollary~\ref{cor-10}(b) we obtain the following result.

%but their almost-sure convergence as $n\to\infty$ is not guaranteed by the ergodic theorems of Birkhoff or Kakutani \textcolor{red}{(Birkhoff ergodic thm. implies almost sure convergence in case of deterministic maps)}
%Cellarosi et al. \cite{CHSJ15} investigated limiting behaviors of symmetric means and other related quantities of digits in the regular continued fraction expansion. \textcolor{red}{(the definition of the symmetric mean in Cellarosis' paper is different from the one in this paper.)}
%Metric properties of a similar quantity was investigated in \cite{CHSJ15}
%for digits of the regular continued fraction expansion. 
%As an application of Corollary~\ref{cor-10}, we directly calculate the average of the symmetric mean of digits in random $\beta$-expansions of random cycles. 

\begin{prop}[Almost-sure convergence of the average symmetric mean of digits]\label{thm-beta2}
Let $\beta$ and 
$p$ be as in Proposition~\ref{thm-beta}.
For $m_p$-almost every sample $\omega=(\omega_k)_{k=1}^\infty\in\Omega$ we have
\[\label{eq-cor}\begin{split}
    \lim_{n\to\infty}\frac{1}{\#{\rm Fix}(T_{\omega}^n)} \sum_{x\in{\rm Fix} (T_{\omega}^n)}&%\frac{1}{n^2}\sum_{i,j=1}^n e_{\omega_i}(x)e_{\omega_j}(x)
    S(e_{\omega_1}(x),\ldots, e_{\omega_n}(x))=
    \Bigl(\sum_{i=0}^{\lfloor\beta\rfloor}iq_i\Bigr)^2.
    %\sum_{i=0}^{\lfloor\beta\rfloor}\sum_{j=0}^{\lfloor\beta\rfloor}q_iq_j\delta_{i+j}
    \end{split} \]
\end{prop}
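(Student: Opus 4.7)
The plan is to reduce Proposition~\ref{thm-beta2} to Corollary~\ref{cor-10}(b) by introducing a suitable acceptable observable and algebraically comparing the symmetric mean with a normalized diagonal double sum. As in Section~\ref{average2}, set $\pi_e\colon\Lambda\to\mathbb R$ by $\pi_e(\omega,x)=e_{\omega_1}(x)$, so that $\pi_e(R^{k-1}(\omega,x))=e_{\omega_k}(T_\omega^{k-1}(x))$ is the $k$-th digit in the random $\beta$-expansion of $x$ with respect to $\omega$. The first step is to verify that $\pi_e$ is an acceptable function in the sense of Definition~\ref{function-a}: each of $e_1,e_2$ is piecewise constant with jumps only at the endpoints of the intervals $L_i$ and $u(L_i)$, which, under the standing assumption that the greedy $\beta$-expansion of $1$ is finite, lie on the Markov partition for $T_1$, respectively $T_2$. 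Hence $\pi_e\circ\pi$ is constant on each $1$-cylinder of $\Sigma$, and in particular continuous with respect to the shift metric.

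The second step is an elementary identity. Writing $a_k=\pi_e(R^{k-1}(\omega,x))$ and using $\sum_{1\le i<j\le n} a_i a_j=\tfrac{1}{2}\bigl((\sum_k a_k)^2-\sum_k a_k^2\bigr)$, we have
\[S(a_1,\ldots,a_n)=\frac{1}{n(n-1)}\Bigl(\sum_{k=1}^n a_k\Bigr)^2-\frac{1}{n(n-1)}\sum_{k=1}^n a_k^2.\]
Since $0\le a_k\le \lfloor\beta\rfloor$, the difference between $S(a_1,\ldots,a_n)$ and $\tfrac{1}{n^2}\bigl(\sum_k a_k\bigr)^2$ is $O(1/n)$ uniformly in $\omega$ and $x$. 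Consequently the average of $S(a_1,\ldots,a_n)$ over ${\rm Fix}(T_\omega^n)$ has the same $n\to\infty$ limit (if any) as the average of the quantity $\tfrac{1}{n^2}\sum_{k,l=0}^{n-1}\pi_e(R^k(\omega,x))\pi_e(R^l(\omega,x))$.

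The third step is the application of Corollary~\ref{cor-10}(b) with $\varphi=\psi=\pi_e$. Since $|T_1'|=|T_2'|=\beta$ on $X$, all weights $|(T_\omega^n)'x|^{-1}=\beta^{-n}$ are equal, so $Z_{\omega,n}^{-1}\sum_{x\in{\rm Fix}(T_\omega^n)}|(T_\omega^n)'x|^{-1}(\cdots)$ reduces to the unweighted average $(\#{\rm Fix}(T_\omega^n))^{-1}\sum_{x\in{\rm Fix}(T_\omega^n)}(\cdots)$. Hence, for $m_p$-almost every $\omega$, the desired limit equals
\[\Bigl(\int \pi_e\,d(m_p\otimes h_{p_1}\nu)\Bigr)^{2}=\Bigl(p_1\int e_1 h_{p_1}\,d\nu+p_2\int e_2 h_{p_2}\,d\nu\Bigr)^{2}=\Bigl(\sum_{i=0}^{\lfloor\beta\rfloor} iq_i\Bigr)^{2},\]
where the intermediate equality uses $h_{p_1}=h_{p_2}\circ u$ together with the change of variables $y=u(x)$, and the last equality decomposes each integral over the level sets of the digit functions as in the proof of Proposition~\ref{thm-beta}.

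The main obstacle is the acceptability verification in the first step: the digit function $\pi_e$ is discontinuous on $\Lambda$, so Corollary~\ref{cor-10}(b) cannot be applied to it as a continuous observable, and one must exploit the fact that the coding $\pi$ resolves these discontinuities by mapping the finer shift boundaries onto the Markov boundaries. Once this regularity property is granted, the proposition follows from the algebraic reduction of the symmetric mean to a diagonal Birkhoff-type double sum and a direct appeal to Corollary~\ref{cor-10}(b).
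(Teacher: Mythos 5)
Your proposal is correct and follows essentially the same route as the paper: the algebraic identity reducing the symmetric mean to $\tfrac{1}{n^2}\bigl(\sum_k a_k\bigr)^2$ up to a uniformly $O(1/n)$ error, followed by Corollary~\ref{cor-10}(b) with $\varphi=\psi=\pi_e$. Your explicit verification that $\pi_e$ is acceptable (constant on $1$-cylinders after composing with $\pi$) and the reduction from weighted to unweighted averages via $|T_1'|=|T_2'|=\beta$ are details the paper leaves implicit or defers to the proof of Proposition~\ref{thm-beta}, but the argument is the same.
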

\begin{proof}
Notice the identity
\[S(e_{\omega_1}(x),\ldots, e_{\omega_n}(x))=\frac{1}{n(n-1)}
\left(\left(\sum_{k=1}^n e_k(x)\right)^2-\sum_{k=1}^n e_k^2(x)\right).\]
Therefore, applying
 Corollary~\ref{cor-10}(b) with $\varphi=\psi=\pi_e$ we obtain the desired equality.
Since the digits are uniformly bounded, the contribution from the second series in the parenthesis is diminished in the limit $n\to\infty$.
 %$f(\omega,x)=g(\omega,x)=\pi_{e}(\omega,x)$.
%\[\begin{split}\lim_{n\to\infty}\frac{1}{\#{\rm Fix}(T_{\omega}^n)} \sum_{x\in{\rm Fix} (T_{\omega}^n)}%\frac{1}{n^2}\sum_{i,j=1}^n e_{\omega_i}(x)e_{\omega_j}(x)
%    S(\{e_{\omega_k}(x)\}_{k=1}^n)
%    &= \Bigl(\int_{\Omega\times K}\pi_e(\omega, x)d(m_{p_1}\times h_{p_1}\nu)\Bigr)^2 \\
%    &=\Bigl(\sum_{i=0}^{\lfloor\beta\rfloor}iq_i\Bigr)^2.
%    \end{split}\]
\end{proof}

\subsection{Average of mean distances of digits}\label{ave-mean}
The {\it mean distance} of a finite sequence $\{a_k\}_{k=1}^n$ of real numbers is given by 
\[D(a_1,\ldots,a_n)=\frac{2}{n(n-1)}\sum_{1\leq i<j\leq n}|a_i-a_j|.\]
%This is the mean value of the distances between two different numbers in $\{a_k\}_{k=1}^n$.
%\textcolor{blue}{It seems that the quantity $\sum_{1\leq i<j\leq n}|a_i-a_j|$ is not represented as a finite linear combination of $\{(\sum_{i=1}^n a_i^N)^M\}_{N,M\geq0}$, in general. This indicates that the limit value of the mean distance of digits in expansions of real numbers as $n$ goes to infinity are not calculated from Corollary 1.2 (a) and (b).  
We obtain the following result as an application of Corollary\ref{cor-10}(c). 

%\textcolor{red}{Unlike the two examples in Sections~\ref{average1} and \ref{average2},the almost sure convergence of this quantity in expansions of numbers is not an immediate consequence of Birkhoff's ergodic theorem. For random cycles of the random $\beta$-expansion, we obtain the following result.}

\begin{prop}
[Almost-sure convergence of the average mean distances of digits]
Let $\beta$ and 
$p$ be as in Proposition~\ref{thm-beta}.
For $m_p$-almost every sample $\omega=(\omega_k)_{k=1}^\infty\in\Omega$ we have
\[\label{eq-cor}
    \lim_{n\to\infty}\frac{1}{\#{\rm Fix}(T_{\omega}^n)} \sum_{x\in{\rm Fix} (T_{\omega}^n)}%\frac{1}{n^2}\sum_{i,j=1}^n e_{\omega_i}(x)e_{\omega_j}(x)
    D(e_{\omega_1}(x),\ldots, e_{\omega_n}(x))=
    2\sum_{0\leq i<j\leq\lfloor\beta\rfloor}(j-i)q_iq_j.\]
\end{prop}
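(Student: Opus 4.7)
The plan is to apply Corollary~\ref{cor-10}(c) with $\pi_1 = \pi_e$, $\pi_2 = -\pi_e$ (where $\pi_e(\omega, x) = e_{\omega_1}(x)$ as in Section~\ref{average2}), and $g\colon \mathbb R \to \mathbb R$ any bounded continuous function that coincides with $y\mapsto|y|$ on $[-\lfloor\beta\rfloor, \lfloor\beta\rfloor]$. Since $|T_1'| = |T_2'| = \beta$, every $x \in {\rm Fix}(T_\omega^n)$ satisfies $|(T_\omega^n)'x|^{-1} = \beta^{-n}$, so $Z_{\omega,n} = \#{\rm Fix}(T_\omega^n)/\beta^n$ and the weights in Corollary~\ref{cor-10}(c) cancel, reducing its left-hand side to the unweighted average $(1/\#{\rm Fix}(T_\omega^n))\sum_x (\cdots)$ that appears in the proposition.

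Next I would rewrite the mean distance as a symmetric double sum of the form required by Corollary~\ref{cor-10}(c). Using $\pi_e(R^k(\omega,x)) = e_{\omega_{k+1}}(T_\omega^k(x))$, the fact that diagonal terms vanish, and that off-diagonal pairs are counted twice, we have
\[D(e_{\omega_1}(x),\ldots, e_{\omega_n}(x)) = \frac{1}{n(n-1)}\sum_{k_1,k_2=0}^{n-1} g\bigl(\pi_e(R^{k_1}(\omega,x)) + (-\pi_e)(R^{k_2}(\omega,x))\bigr).\]
Replacing $1/(n(n-1))$ by $1/n^2$ introduces only an $O(1/n)$ relative error, and since the integrands are uniformly bounded by $2\lfloor\beta\rfloor$, the limit is unchanged.

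I would then verify that $\pi_e$ (hence $-\pi_e$) is acceptable in the sense of Definition~\ref{function-a}, even though it is not continuous: the Markov partition of each $T_i$ refines the partition $(L_j)_j$ defining $e_i$, so $\pi_e \circ \pi$ depends only on the first symbol $a_1 \in \mathcal A$ of $\underline a \in \Sigma$ and is therefore continuous on $\Sigma$. The version of Corollary~\ref{cor-10}(c) in which the $\pi_i$ are merely acceptable rather than continuous follows by the same argument as in Section~\ref{est2}, since $\mu \mapsto \int g\, d(\mu \circ \pi_1^{-1} * \mu \circ \pi_2^{-1})$ remains an acceptable function on $\mathcal M(\Lambda)$ whenever $\pi_1,\pi_2$ are acceptable. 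Applying this to the present setting yields convergence of the average to
\[\int g\, d\bigl((m_p \otimes \lambda_p)\circ \pi_e^{-1} * (m_p \otimes \lambda_p)\circ(-\pi_e)^{-1}\bigr).\]

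Finally, I would identify this integral explicitly. As implicit in the proof of Proposition~\ref{thm-beta}, the push-forward $(m_p\otimes\lambda_p)\circ \pi_e^{-1}$ equals the discrete probability measure $\sum_{i=0}^{\lfloor\beta\rfloor} q_i\,\delta_i$, and correspondingly $(m_p\otimes\lambda_p)\circ(-\pi_e)^{-1} = \sum_{j=0}^{\lfloor\beta\rfloor} q_j\,\delta_{-j}$. Their convolution is $\sum_{i,j} q_iq_j\,\delta_{i-j}$, so the integral evaluates to $\sum_{i,j=0}^{\lfloor\beta\rfloor}|i-j|q_iq_j = 2\sum_{0\leq i<j\leq\lfloor\beta\rfloor}(j-i)q_iq_j$, matching the claimed limit. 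The main (and mild) obstacle is the justification of the acceptable version of Corollary~\ref{cor-10}(c) for non-continuous $\pi_i$, which is handled as sketched above.
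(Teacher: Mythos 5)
Your proposal is correct and takes essentially the same route as the paper: apply Corollary~\ref{cor-10}(c) with $\pi_1=\pi_e$, $\pi_2=-\pi_e$ and a bounded continuous $g$ agreeing with $|\cdot|$ on $[-\lfloor\beta\rfloor,\lfloor\beta\rfloor]$, use $Z_{\omega,n}=\#{\rm Fix}(T_\omega^n)/\beta^n$ to remove the weights, replace $2/(n(n-1))$ by $1/n^2$ using vanishing diagonal terms, and identify the convolution $\sum_{i,j}q_iq_j\delta_{i-j}$ (the paper isolates exactly these pushforward identities in a lemma) to get $\sum_{i,j}|i-j|q_iq_j$. Your additional check that $\pi_e$ is merely acceptable rather than continuous, and that the functional $\mu\mapsto\int g\,d(\mu\circ\pi_1^{-1}*\mu\circ\pi_2^{-1})$ remains acceptable in that case, is a minor but welcome refinement of the paper's argument, which applies the corollary to $\pi_e$ directly.
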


\begin{proof}
A key ingredient is the next representation of
measures in terms of 
the relative frequency of digits. %$\{q_i\}_{i=0}^{\lfloor\beta\rfloor}$.
%for $(\omega,x)\in\Omega\times K$.
%The key fact for the proof is that the convolution $(m_p\times h_{p_1}{\rm Leb}_K)\circ\pi_e^{-1}*(m_p\times h_{p_1}{\rm Leb}_K)\circ\pi_e^{-1}$ is given by the relative frequency of digits $\{q_i\}_{i=0}^{\lfloor\beta\rfloor}$:

\begin{lemma}\label{thm-beta3}
The following hold:
\begin{itemize}
\item[(a)] $(m_p\otimes h_{p_1}\nu)\circ\pi_e^{-1}=\sum_{i=0}^{[\beta]}q_i\delta_i;$
\item[(b)]$(m_p\otimes h_{p_1}\nu)\circ(-\pi_e^{-1})=\sum_{i=0}^{[\beta]}q_i\delta_{-i};$

\item[(c)] 
$(m_p\otimes h_{p_1}\nu)\circ\pi_e^{-1}*(m_p\otimes h_{p_1}\nu)\circ(-\pi_e)^{-1}=\sum_{i=0}^{\lfloor\beta\rfloor}\sum_{j=0}^{\lfloor\beta\rfloor}q_iq_j\delta_{i-j}$.
\end{itemize}
\end{lemma}

\begin{proof}
For any $i\in\{0,1,\dots,\lfloor\beta \rfloor\}$,
%By the definition of $\pi_e$, 
the set $\{(\omega, x)\in\Lambda\colon \pi_e(\omega,x)=i\}$ is the disjoint union of
$\{\omega\in\Omega\colon \omega_1=1\}\times L_i$ and $\{\omega\in\Omega\colon  \omega_1=2\}\times u(L_{\lfloor \beta\rfloor-i})$. Then we have
$(m_p\otimes h_{p_1}\nu)\circ\pi^{-1}_e(\{i\})
%=p_1\int_{L_i}h_{p_1} d{\rm Leb}_{K}+p_2\int_{L(\lfloor\beta\rfloor-r)}h_{p_2} d{\rm Leb}_{K}
=q_i$, and so
for any Borel subset $B$ of $\mathbb R$,
%This shows that 
\[(m_p\otimes h_{p_1}\nu)\circ\pi_e^{-1}(B)=\sum_{i=0}^{[\beta]}q_i\delta_i(B).\]
Hence (a) holds. A proof of (b) is analogous.
A direct calculation gives
\[
\left(\sum_{i=0}^{[\beta]}q_i\delta_i\right)*\left(\sum_{j=0}^{[\beta]}q_j\delta_{-j}\right)(B)=\sum_{i=0}^{\lfloor\beta\rfloor}q_i\sum_{j=0}^{\lfloor\beta\rfloor}q_j\delta_{-j}(B-\{i\}),\]
%for any Borel set $E\subset\mathbb{R}$, 
where $B-\{i\}=\{x-i\colon  x\in B\}$.
%for $z\in\mathbb{R}$.
Since $\delta_{-j}(B-\{i\})=\delta_{i-j}(B)$
%for $i,j\in \mathbb{N}$, % and $E\subset\mathbb{R}$, 
we obtain (c).
\end{proof}

By Lemma~\ref{thm-beta3}(c) and Corollary~\ref{cor-10}(c) applied to the functions $\pi_1=\pi_e$, $\pi_2=-\pi_{e}$ and a bounded continuous function $g\colon\mathbb R\to\mathbb R$ such that $g(x)=|x|$ for all $x\in[-\lfloor\beta\rfloor,\lfloor\beta\rfloor]$,
%\[g(x)=\begin{cases}
%|x|& (-\lfloor\beta\rfloor\leq x\leq\lfloor\beta\rfloor)\\
% \lfloor\beta\rfloor & ({\rm otherwise}),
% \end{cases}\]  
for $m_p$-almost every $\omega\in\Omega$ we have 
\[\begin{split}
    \lim_{n\to\infty}\frac{1}{\#{\rm Fix}(T_{\omega}^n)}& \sum_{x\in{\rm Fix} (T_{\omega}^n)}
    D(e_{\omega_1}(x),\ldots, e_{\omega_n}(x)) \\
    &=\lim_{n\to\infty}\frac{1}{\#{\rm Fix}(T_{\omega}^n)} \sum_{x\in{\rm Fix} (T_{\omega}^n)}\frac{2}{n(n-1)}\sum_{1\leq i<j\leq n}|e_{\omega_i}(x)-e_{\omega_j}(x)| \\
    &=\lim_{n\to\infty}\frac{1}{\#{\rm Fix}(T_{\omega}^n)} \sum_{x\in{\rm Fix}(T_{\omega}^n)}\frac{1}{n^2}
    \sum_{i=1}^n\sum_{j=1}^n|e_{\omega_i}(x)-e_{\omega_j}(x)| \\
    &=\int_{\mathbb{R}}|x|d\left(\sum_{i=0}^{\lfloor\beta\rfloor}\sum_{j=0}^{\lfloor\beta\rfloor}q_iq_j\delta_{i-j}\right)\\
    &=\sum_{i=0}^{\lfloor\beta\rfloor}\sum_{j=0}^{\lfloor\beta\rfloor}|i-j|q_iq_j 
    =2\sum_{0\leq i<j\leq\lfloor\beta\rfloor}(j-i)q_iq_j,
\end{split}\]
as required. 
\end{proof}

 \subsection{ Non-uniformly expanding maps with common fixed points}\label{verifya2}
 %  Deterministic dynamical systems generated by %iterations of 
%  non-uniformly expanding Markov maps
%  with neutral fixed points have been heavily investigated as a model of intermittency since the work of Pomeau and Manneville \cite{PomMan80}.
%  \cite{LSV99,PomMan80}
%\textcolor{red}{(added)We say $T$ is
 %  {\it fully branched} if $T(J(a))\supset(0,1)$ holds for all $a\in \mathscr A$.}
 Let $T$ be a Markov map on $[0,1]$.
We say $x\in T$ is {\it a neutral fixed point} of $T$
if $T(x)=x$ and $|T'(x)|=1$.
Deterministic dynamical systems generated by iterations of Markov maps having neutral fixed points have been well investigated, see  \cite{PomMan80,Sch75,Tha83} for example.

  Under the notation in the beginning of Section~\ref{markov},
let 
$\mathcal A(1)=\{1,3\}$, $\mathcal A(2)=\{2,4\}$ and
let $T_1,T_2$ be non-uniformly expanding Markov maps 
of class $C^{1+\tau}$, $\tau>0$ on $X=[0,1]$
with Markov partitions $\{J(a)\}_{a\in\mathcal A(1)}$, $\{J(a)\}_{a\in\mathcal A(2)}$ respectively
such that the following hold:

\begin{itemize}

\item[(B1)]
for any $a\in\mathcal A=\{1,2,3,4\}$,
$T_{i(a)}(J(a))\supset(0,1);$
%$T_1$, $T_2$ are fully branched;
%$T_{1}(J(1))\supset (0,1)$, $T_{1}(J(3))\supset (0,1)$, $T_{2}(J(2))\supset (0,1)$, $T_{2}(J(4))\supset (0,1)$;

\item[(B2)] 
%$T_1(0)=0$, $T_1'(0)=1$; %$T_1''(0)>0$;
$T_1(0)=T_2(0)=0$, and
$0$ is a neutral fixed point of $T_1$;
%with \textcolor{red}{$T_1''(0)>0$};

\item[(B3)] 
there exists $\gamma>1$ such that
$\inf|(T_{1}|_{J(3)})'|\geq\gamma$ and 
$\inf|(T_{2}|_{J(4)})'|\geq\gamma$.

\end{itemize}
 FIGURE~1 gives a schematic picture of the partition of the space $\Lambda=\Omega\times X$.
 Condition (B1) means that $T_1$, $T_2$ are fully branched, which implies (A1) (A2) (A3).
The issue is (A4).
  %Let $J$, $K$ be interval such that $J\subset K$ and $K\setminus J$ is not connected.
%We say $K$ contains {\it a $\epsilon$-scaled interval of $J$} $(\epsilon>0)$
%if both of the connected components of $K\setminus J$
%are of length greater than $\epsilon|J|$. 
In order to control distortions of random compositions on $J(1)\cap J(2)$, we additionally assume
\begin{itemize}
\item[(B4)]
there exists $\epsilon>0$ such that for $a=1,2$,
$T_{a}|_{J(a)\setminus\{0\}}$
 can be extended to a $C^3$ map on 
 an interval of length $|J(a)|+\epsilon$
 with negative Schwarzian derivative.
 %$K$ with negative Schwarzian derivative and $T_a(K)$
%containing a $\epsilon$-scaled interval of $[0,1]$
\end{itemize}
Recall that a real-valued $C^3$ function $f$ on an interval
 has {\it negative Schwarzian derivative} if $f'''/f'-(3/2)(f''/f')^2<0$.
It is well-known that $T_1$, $T_2$ have a sigma-finite invariant measure
that is absolutely continuous with respect to ${\rm Leb}$, abbreviated as an {\it acim}.
For $i=1,2$ define $t_i\colon J(i)\to\mathbb N\cup\{\infty\}$ by
\[t_i(x)=\inf\{n\geq1\colon T_i^n(x)\in J(i)\}.\]

\begin{figure}
\begin{center}
\includegraphics[height=5.5cm,width=6cm]{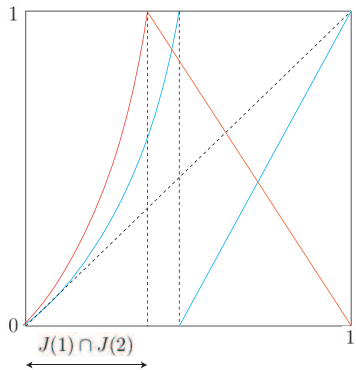}
\caption
{The maps $T_1$ (red) and $T_2$ (blue) in Proposition~\ref{ex2-prop}(b)(c) having $0$ as a common neutral fixed point.}
 %\textcolor{red}{Unnecessary bar near $1/(\beta-1)$.}
\end{center}
\end{figure}

\begin{prop}\label{ex2-prop}
For $T_1$, $T_2$ as above the following hold:
\begin{itemize}
    \item[(a)] if
% $T_2$ is uniformly expanding.
$0$ is not a neutral fixed point of $T_2$, then
for any positive probability vector $p=(p_1,p_2)$,
there exists a stationary measure $\lambda_p$ that is absolutely continuous with respect to ${\rm Leb}$ such that $(m_p\otimes\lambda_p)\circ\pi$ is the unique equilibrium state for the random geometric potential $\phi$. In particular, (A4) holds.
\item[(b)] if
 $0$ is a neutral fixed point of $T_2$,
 $T_1(x)\geq T_2(x)$ for all $x\in J(1)\cap J(2)$
and $\int_{J(1)} t_1(x)dx=+\infty$, then
for
 any positive probability vector $p=(p_1,p_2)$,
 $(m_p\otimes\delta_0)\circ\pi$ is the unique equilibrium state for the potential $\phi$. 
 In particular, (A4) holds.
 
 \item[(c)] if
 $0$ is a neutral fixed point of $T_2$, 
 $T_1(x)\geq T_2(x)$ for all $x\in J(1)\cap J(2)$ and $\int_{J(2)} t_2(x) dx<+\infty$, then
for
 any positive probability vector $p=(p_1,p_2)$,
 there exists a stationary measure $\lambda_p$ that is absolutely continuous with respect to ${\rm Leb}$ such that 
  equilibrium states for the potential $\phi$
 are precisely the convex combinations
 of $(m_p\otimes\lambda_p)\circ\pi$ and $(m_p\otimes\delta_0)\circ\pi$.
 In particular, (A4) does not hold.
 \end{itemize}
\end{prop}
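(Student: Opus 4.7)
The plan is to reduce each of the three cases to the Mauldin--Urba\'nski thermodynamic formalism for topologically mixing countable Markov shifts \cite{MauUrb03}, via an inducing construction on the ``expanding'' cylinder $Y=\{\underline a\in\Sigma\colon a_1\in\{3,4\}\}$. The first return map $\sigma_Y\colon Y\to Y$ of the shift is topologically conjugate to the full shift over a countable alphabet whose letters parametrize admissible first return words; assumption (B4) together with Lemma~\ref{mild} yields that the induced potential $\Phi=S_{r_Y}\phi$, where $r_Y$ is the first return time to $Y$, is locally H\"older continuous with summable variations. Classifying the equilibrium states of $\phi$ on $\Sigma$ then reduces to the summability of $\sum_w\exp\sup_{[w]}\Phi$ and the integrability of $r_Y$ against the resulting Gibbs state.

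For part (a), since $|T_2'(0)|>1$, any excursion of length $n$ away from $Y$ forces $\omega_2=\cdots=\omega_n=1$, contributing a Bernoulli factor $p_1^{n-1}$. Combined with the standard polynomial tail estimate for the deterministic intermittent map $T_1$ (which follows from (B4) via bounded distortion), this yields both summability of $\Phi$ and integrability of $r_Y$, so Mauldin--Urba\'nski produce a unique equilibrium state $\nu_\Phi$ for $\Phi$. A Kac lift then gives a unique $\sigma$-invariant equilibrium state $\tilde\nu$ for $\phi$ on $\Sigma$, whose $X$-marginal under $\pi$ and $\Pi$ is the required absolutely continuous stationary measure $\lambda_p$. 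The competing candidate $(m_p\otimes\delta_0)\circ\pi$ has free energy $-p_2\log|T_2'(0)|<0$ and is therefore ruled out, so (A4) holds.

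For parts (b) and (c), both $T_1,T_2$ have $0$ as a neutral fixed point, so $|T_{\omega_1}'(0)|=1$ and a direct computation gives $F((m_p\otimes\delta_0)\circ\pi)=0=P(\phi)$; hence $(m_p\otimes\delta_0)\circ\pi$ is always an equilibrium state. The monotonicity $T_1\ge T_2$ on $J(1)\cap J(2)$ implies $t_1\le t_2$ pointwise, which I would use to compare the random return time $r_Y$ with the slower deterministic one. In case (b), $\int t_1\,dx=+\infty$ propagates to non-integrability of $r_Y$ against the Gibbs state for $\Phi$, so the Kac lift of $\nu_\Phi$ is an infinite $\sigma$-invariant measure and $(m_p\otimes\delta_0)\circ\pi$ is the only equilibrium probability measure. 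In case (c), $\int t_2\,dx<+\infty$ forces $\int r_Y\,d\nu_\Phi<+\infty$, so the Kac lift normalizes to an equilibrium state $(m_p\otimes\lambda_p)\circ\pi$ with $\lambda_p$ absolutely continuous; together with $(m_p\otimes\delta_0)\circ\pi$ these are the only ergodic equilibrium states, and by ergodic decomposition every convex combination is also one, so (A4) fails.

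The main obstacle is transferring the classical tail estimates for a single intermittent map to the random composition $T_\omega^n$ that mixes the two neutral branches. The key technical step is to establish, uniformly in the admissible symbolic past, bounded distortion of $T_\omega^n$ along first return cylinders together with sharp tail estimates for $r_Y$ as a function of the combinatorial structure of the random symbol block. Hypothesis (B4) supplies the distortion control, while the monotonicity $T_1\ge T_2$ permits a pointwise comparison of random excursions with those of the slower deterministic map, yielding the polynomial tail rates needed to separate cases (b) and (c).
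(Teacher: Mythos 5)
Your overall strategy---inducing on the expanding cylinders $[3]\cup[4]$, identifying the first return system with a full shift over a countable alphabet, invoking Mauldin--Urba\'nski for a unique Gibbs/equilibrium state of the induced potential, and separating the three cases by integrability of the return time, with the sandwich between the random escape time and the deterministic times $t_1\le t_2$ coming from $T_1\ge T_2$---is exactly the paper's route, and your outline of (b) and (c) matches it. The problem is case (a), whose key estimate rests on a false step: an excursion of length $n$ away from $Y$ does \emph{not} force $\omega_2=\cdots=\omega_n=1$. Both $J(1)$ and $J(2)$ contain a one-sided neighbourhood of the common fixed point $0$, and a point sufficiently close to $0$ stays in $J(2)$ for many iterates even though $T_2$ is expanding there; since the coordinates of $\omega$ are i.i.d., the symbol $2$ can occur anywhere inside an excursion, so there is no Bernoulli factor $p_1^{n-1}$. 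The correct mechanism (the one the paper uses) is a per-step bound on the weight: on excursion symbols, $\exp\phi\le p_1$ when the symbol is $1$ (because $|T_1'|\ge1$) and $\exp\phi\le p_2\rho^{-1}$ when the symbol is $2$, where $\rho=\inf|T_2'|>1$ on the relevant part of $J(2)$; summing over all symbol patterns gives a Gibbs-measure tail of order $(p_1+p_2\rho^{-1})^n$, which yields both the summability needed for Mauldin--Urba\'nski and $\int t\,d\hat\mu_p<\infty$. Relatedly, your appeal to ``the standard polynomial tail estimate for the deterministic intermittent map $T_1$'' is unjustified: (B2)--(B4) impose no polynomial normal form at the neutral point, so no polynomial rate is available---and none is needed.

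There is a second gap in (a): for uniqueness you must exclude \emph{every} equilibrium state giving zero weight to the inducing domain, i.e.\ every shift-invariant measure carried by the subshift $\{1,2\}^{\mathbb N}\subset\Sigma$, not only $(m_p\otimes\delta_0)\circ\pi$. Such measures project to $\Omega\times\{0\}$ with an arbitrary (not necessarily Bernoulli) marginal on the symbols, so your single free-energy computation does not dispose of them; the paper handles them all at once by bounding the pressure of $\phi$ restricted to this subshift by $\log(p_1+p_2\rho^{-1})<0$. (In cases (b),(c) the analogous restricted pressure is $0$ and is attained only by the $(p_1,p_2)$-Bernoulli measure, which is why the trivial equilibrium state is exactly $(m_p\otimes\delta_0)\circ\pi$.) Finally, two points you assert but should justify within your framework: H\"older regularity of the induced potential does not follow from the $o(n)$ variation bound alone but needs the Koebe/negative-Schwarzian distortion estimate from (B4) combined with the uniform expansion $\gamma$ of (B3); and transferring $\int t_1\,dx=+\infty$ (resp.\ $\int t_2\,dx<+\infty$) from Lebesgue measure to the induced Gibbs state requires knowing that its density with respect to the reference measure coming from $m_p\otimes{\rm Leb}$ is bounded away from $0$ and $\infty$, which is where the Gibbs property and the distortion bound enter.
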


\begin{remark}\label{rem-acim}
{\rm It is well-known that $\int_{J(i)} t_i(x)dx$ is finite if and only if  acims of $T_i$ can be normalized.
In case (a) of Proposition~\ref{ex2-prop}, by (B4) and the minimum principle \cite[Chapter II, Lemma~6.1]{dMevSt93},
$T_2$ is uniformly expanding, or else $\lim_{x\nearrow x_0}T_2'(x_0)=1$
where $x_0$ denotes the endpoint of $J(2)$ other than $0$.
 In particular, $T_2^2$ is uniformly expanding. Hence,
 acims of $T_2$ can be normalized. In case (b), acims of both maps
cannot be normalized (they are infinite measures). Since $0$ is a common fixed point of $T_1$ and $T_2$, $\delta_0$ is a stationary measure.
This case applies to random dynamical systems generated by some L-S-V maps \cite{LSV99}, to be discussed in Section~\ref{@}.
In case (c), acims of both maps can be normalized.}
\end{remark}

%We can slightly complicate the above example and keep the conclusion of Proposition~\ref{ex2-prop} the same, by adding more uniformly expanding fully branched maps, or by adding more neutral fixed points under a constraint
%that two different maps do not share the same neutral fixed point.
%Maps with common neutral fixed points will be discussed in Section~\ref{@}.

   \begin{proof}[Proof of Proposition~\ref{ex2-prop}]
% By (B1) and (B3), 
%\eqref{pelikan} holds for any positive probability vector $p$, and hence
% there exists a unique stationary measure $\lambda_p$ that is absolutely continuous with respect to the Lebesgue measure. It is left to show that $(m_p\times\lambda_p)\circ\pi$ is the unique equilibrium state for the potential $\phi$. 
The difficulty is that the random geometric potential $\phi$
on the space $\Sigma=\mathcal A^{\mathbb N}=\{1,2,3,4\}^\mathbb N$ is not H\"older continuous.
  We construct a full shift over an infinite alphabet using the first return map to the subset $[3]\cup[4]$ of $\Sigma$, and
 then appeal to the results on the existence and uniqueness of equilibrium states for countable Markov shifts \cite{MauUrb03}.

\begin{figure}
\begin{center}
\includegraphics[height=4cm,width=7cm]{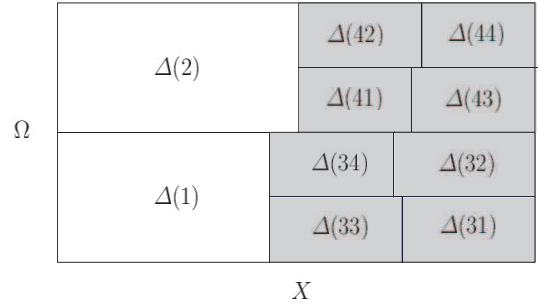}
\caption
{%The Markov partition for the skew product map $R$ in 
The projection of the inducing domain $[3]\cup[4]$ to $\Lambda$ (the shaded area).}
\end{center}
\end{figure}

 Define a function  \[t\colon\underline{a}\in \Sigma\mapsto\inf\{n\geq1\colon \sigma^{n}\underline{a}\in [3]\cup[4]
\}\in\mathbb N\cup\{+\infty\},\]
which is the first entry time to $[3]\cup[4]$.
For each $n\in\mathbb N\cup\{+\infty\}$, write
$\{t=n\}$ for the set $\{\underline{a}\in[3]\cup[4]\colon t(\underline{a})=n\}$.
%and $B_{a}^n=[a]\cap\{\tau=n\}$. 
%Write $0^\infty$ for $(a_k)_{k=1}^\infty\in\Sigma$ with $a_k=0$
%for all $k\geq1.$ We have
%$\{\tau=+\infty\}=\bigcup_{n=0}^\infty \sigma^{-n}(0^\infty)$.
%Then $\tau(\underline{x})=+\infty$
%holds if and only if $\underline{x}$ is contained in the preimage of one of the fixed points of $\sigma$ which corresponds to the neutral fixed point of some map. 
Define an induced map \[\hat\sigma\colon \underline{a}\in([3]\cup[4])\setminus\{t=+\infty\}\mapsto \sigma^{t(\underline{a})}\underline{a}\in[3]\cup[4],\]
 and set 
\[\hat\Sigma=\bigcap_{n=0}^\infty \hat\sigma^{-n}(([3]\cup[4])\setminus\{t=+\infty\}).\]
Since $\hat\sigma$ is surjective,
$\hat\sigma(\hat\Sigma)=\hat\Sigma$ holds.
We introduce an empty word $\emptyset$, set $\{1,2\}^0=\{\emptyset\}$ 
and define $a\emptyset=a=\emptyset a$, $a\emptyset b=ab$ for $a,b\in\mathcal A$.
For each $a\in\{3,4\}$, $j\geq0$ and $\bold i^j\in\{1,2\}^j$,
$\hat\sigma$ maps set
$[a \bold i^{j}3]\cup[a \bold i^{j}4]$
bijectively onto $[3]\cup[4]$.
The set $([3]\cup[4])\setminus\{t=+\infty\}$ is partitioned into sets of this form.
%$[a]\setminus\{\tau=+\infty\}=\bigcup_{n=1}^\infty B_a^n$, %Set $L_n=[-1]\cap\{R=n\}$.
%and moreover
% $L_n$ is the union of infinitely many $(n+1)$-cylinders:
%\[L_n=\bigcup_{k\in\mathbb Z\setminus\{0,-1\}}^\infty[(-1)^nk].\]
% Moreover, for each $n\geq1$, $\sigma^n|_{L_n}$ is a bijection onto $K$. 
%$\Sigma_0\setminus\{t=+\infty\}=\bigcup_{a\in \mathcal A\setminus\{1\}}\bigcup_{
%j=1}^\infty B_{a}^j.$
 %\begin{equation}\label{induce-eq10}
%\bigcap_{n=0}^\infty \tilde{\sigma}^{-n} \left(\bigcup_{\omega \in F} [\omega] \right) =K- %\bigcup_{n=1}^\infty \sigma^{-n}(1^\infty).
%\end{equation}
We introduce an infinite discrete topological space
\[\hat{\mathcal A}=\left\{[a \bold i^j3]\cup[a \bold i^j4]\colon a\in\{3,4\},j\geq0,\bold i^j\in\{1,2\}^j\right\},\]
and an associated one-sided Cartesian product topological space
 \[{\hat{\mathcal A}}^{\mathbb N}=\{\underline{\hat a}=(\hat a_n)_{n=1}^\infty\colon \hat a_n\in\hat{\mathcal A}\text{ for all }n\geq1\}.\] 
Define a map $\iota\colon
\bigcup_{n=1}^\infty{\hat{\mathcal A}}^{n}\to\bigcup_{n=1}^\infty\mathcal A^n$ by 
\[\iota(\hat a_1\hat a_2\cdots \hat a_n)=a_1\bold i^{j_1}a_2\bold i^{j_2}\cdots a_n\bold i^{j_n},\]
   where
    $\hat a_k=[a_k \bold i^{j_k}3]\cup[a_k \bold i^{j_k}4]\in\hat{\mathcal A}$ for $1\leq k\leq n$.
For $n\geq1$ and  $\hat a_1\cdots \hat a_n\in{\hat{\mathcal A}}^n$, we set
  \[[\![\hat a_1\cdots \hat a_n]\!]=[\iota(\hat a_1\hat a_2\cdots \hat a_n)3 ]\cup[\iota(\hat a_1\hat a_2\cdots \hat a_n)4 ],\]%\cap\hat\Sigma,\]
%the restriction $\hat\sigma|_{\hat\Sigma}$ 
%is topologically conjugate to the left shift on ${\hat{\mathcal A}}^\mathbb N$.
%Therefore,
%the restriction $\hat\sigma|_{\hat\Sigma}$ 
%is topologically conjugate to the left shift acting on
% the countable full shift
%  $\mathbb B^{\mathbb N}=\{\underline{b}=(b_k)_{k=1}^\infty\colon b_k\in\mathbb B\text{ for all }k\geq1\}$ 
\[\hat\varDelta(\hat a_1\cdots \hat a_n)=\varDelta(\iota(\hat a_1\cdots \hat a_n))\quad\text{and}\quad F_{\hat a_1\cdots \hat a_n}=f_{\iota(\hat a_1\cdots \hat a_n)}.\]
Recall \eqref{remind1} and \eqref{remind2}.

The map $\iota_\infty\colon{\hat{\mathcal A}}^{\mathbb N}\to\hat\Sigma$
   given by \[\iota_\infty(\underline{\hat a})\in\bigcap_{n=1}^\infty[\![\hat a_1\cdots \hat a_n]\!]\] is a homeomorphism commuting with the left shifts on ${\hat{\mathcal A}}^\mathbb N$ and $\hat\Sigma$. For ease of notation, we will sometimes identify $\underline{\hat a}$ with $\iota_\infty(\underline{\hat a})$.
%and 
%  \[[\![b_1\cdots b_n]\!]=[a_11^{j_1-1}a_21^{j_2-1}\cdots a_n1^{j_n-1}]\cap\hat\Sigma.\]
  %  We identify $\mathbb B^{\mathbb N}$ with $\hat\Sigma$ by identifying $\underline{ b}=(b_n)_{n=1}^\infty\in\mathbb B^{\mathbb N}$ with the element of the singleton
  % $\bigcap_{n=1}^\infty[\![b_1\cdots b_n]\!]$ contained in $\hat\Sigma$.
  %S_{t(\underline{a})}\phi(\underline{a}),\]
%Note that $\hat\phi$ is unbounded.
If
$\iota_\infty(\underline{\hat a})=a_1\bold i^{j_1}a_2\bold i^{j_2}\cdots a_n\bold i^{j_n}\cdots$ then
$t(\underline{\hat a})=j_1+1$.
We introduce an induced potential \[\Phi\colon \hat{\underline{a}}\in \hat\Sigma\mapsto
 \sum_{k=0}^{t(\underline{\hat a})-1}\phi(\sigma^k\hat{\underline{a}}),\]
and an associated induced pressure 
%by $\hat\phi(\underline{x})=S_{\tau(\underline{x})}\phi(\underline{x}).$
% For each $\theta>\log(\max_{1\leq i\leq N}p_i)$,
%let $\hat{P}(\hat\phi)$ denote
% the pressure associated with the countable full shift $\hat\Sigma$ and the potential $\hat\phi$, \cite[Section~2.1]{MauUrb03} namely
\[P(\Phi)=\lim_{n\to\infty}\frac{1}{n}\log\sum_{\hat a_1\cdots \hat a_n\in {\hat{\mathcal A}}^n}
\sup_{[\![\hat a_1\cdots \hat a_n]\!]}\exp\left(\sum_{k=0}^{n-1}\Phi\circ\hat\sigma^k\right).\]
By \cite[Theorem~2.1.8]{MauUrb03}, the variational principle holds:
\begin{equation}\label{VarP}P(\Phi)=\sup\left\{ h(\hat\nu)+\int
\Phi  d\hat\nu\colon\hat\nu\in\mathcal M(\hat\Sigma,\hat\sigma|_{\hat\Sigma}), \int\Phi d\hat\nu>-\infty\right\},\end{equation}
where $\mathcal M(\hat\Sigma,\hat\sigma|_{\hat\Sigma})$
denotes the set of $\hat\sigma|_{\hat\Sigma}$-invariant Borel probability measures on $\Sigma$ whose supports are contained in $\hat\Sigma$ and
$ h(\hat \nu)$ denotes the measure-theoretic entropy of $\hat\nu$ 
relative to $\hat{\sigma}|_{\hat\Sigma}$.
Measures which attain the supremum in \eqref{VarP} are called {\it equilibrium states for the induced potential $\Phi$}.
%If $0$ is a neutral fixed point of $T_2$,

In case (b) or (c), there is a ``trivial'' equilibrium state for the random geometric potential.
\begin{lemma}\label{neutralT2}
If $|T_2'(0)|=1$, 
 $(m_p\otimes\delta_0)\circ\pi$ is an equilibrium state for the potential $\phi$.
 \end{lemma}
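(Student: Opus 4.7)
The strategy is to compute the free energy $F(\nu)$ of $\nu = (m_p \otimes \delta_0) \circ \pi$ directly and verify $F(\nu) = 0 = P(\phi)$, the last equality being Lemma~\ref{exist-equi}. First I would observe that (B2) together with the hypothesis $|T_2'(0)| = 1$ gives $T_1(0) = T_2(0) = 0$, so $\delta_0 = p_1 \delta_0 \circ T_1^{-1} + p_2 \delta_0 \circ T_2^{-1}$ is stationary and $m_p \otimes \delta_0$ is $R$-invariant. The remark following Definition~\ref{def2} then ensures that $\nu$ is a $\sigma$-invariant Borel probability measure on $\Sigma$.

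Next I would give an explicit symbolic description of $\nu$. The point $0$ lies in $J(1) \cap J(2)$ (with $1 \in \mathcal{A}(1)$ and $2 \in \mathcal{A}(2)$) but not in the closure of any $J(a)$ for $a \in \{3, 4\}$, so each $(\omega, 0)$ has a unique $\pi$-preimage, namely the sequence $(a_k)_{k \ge 1}$ with $a_k = 1$ when $\omega_k = 1$ and $a_k = 2$ when $\omega_k = 2$. Under the resulting identification of $\{1, 2\}^{\mathbb N} \subset \Sigma$ with $\Omega$, the measure $\nu$ coincides with $m_p$, and $\sigma$ restricted to $\{1, 2\}^{\mathbb N}$ is the full $2$-shift; thus $h(\nu) = -\sum_{i=1}^{2} p_i \log p_i$.

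It remains to evaluate $\int \phi\, d\nu$. Since $\phi$ is by definition the lift of $(\omega, x) \mapsto \log p_{\omega_1} - \log|T_{\omega_1}'(x)|$ via $\pi$, the change of variables gives
\[
\int \phi\, d\nu = \int \bigl( \log p_{\omega_1} - \log|T_{\omega_1}'(x)| \bigr)\, d(m_p \otimes \delta_0)(\omega, x),
\]
and because $x = 0$ on the support of $m_p \otimes \delta_0$ and both $|T_1'(0)|$ and $|T_2'(0)|$ equal $1$, the derivative term vanishes identically. Hence $\int \phi\, d\nu = p_1 \log p_1 + p_2 \log p_2$, and summing with $h(\nu)$ yields $F(\nu) = 0 = P(\phi)$, so $\nu$ attains the supremum in \eqref{VP} and is an equilibrium state for $\phi$.

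The proof is essentially a computation, so there is no serious obstacle; the only point requiring real care is the verification that $(\omega, 0)$ has a unique symbolic coding, so that the recipe defining $\nu$ is unambiguous, which follows from the structure of the Markov partitions of $T_1$ and $T_2$ around their common fixed point $0$. The hypothesis $|T_2'(0)| = 1$ is used in an essential way: without it, the $\omega_1 = 2$ contribution to $\int \phi\, d\nu$ would be strictly negative, $F(\nu)$ would be strictly less than $0$, and $\nu$ would fail to be an equilibrium state, consistent with Proposition~\ref{ex2-prop}(a).
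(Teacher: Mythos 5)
Your proposal is correct and follows essentially the same route as the paper's proof: identify $(m_p\otimes\delta_0)\circ\pi$ with the $(p_1,p_2)$-Bernoulli measure on the coding of $\Omega\times\{0\}$ to get $h(\nu)=-p_1\log p_1-p_2\log p_2$, compute $\int\phi\,d\nu=p_1\log p_1+p_2\log p_2$ using $|T_1'(0)|=|T_2'(0)|=1$, and conclude $F(\nu)=0=P(\phi)$ via Lemma~\ref{exist-equi} and the variational principle. Your extra care about the uniqueness of the symbolic coding of $(\omega,0)$ is a sound (and slightly more explicit) justification of the isomorphism the paper asserts directly.
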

\begin{proof} 
The measure $(m_p\otimes\delta_0)\circ\pi$
is the only measure in $\mathcal M(\Sigma,\sigma)$ which does not give positive weight to $\hat\Sigma$.
 Since $|T_1'(0)|=1=|T_2'(0)|$ we have
  \[\int \phi d((m_p\otimes\delta_0)\circ\pi)=p_1\log p_1+p_2\log p_2.\]
 Since $(\sigma|_{\Omega\times\{0\}},(m_p\otimes\delta_0)\circ\pi)$ is isomorphic to the one-sided $(p_1,p_2)$-Bernoulli shift,
 for the entropy we have
 \[h((m_p\otimes\delta_0)\circ\pi)=-p_1\log p_1-p_2\log p_2.\]
 Hence
 $F((m_p\otimes\delta_0)\circ\pi)=0.$ Since
 $P(\phi)=0$ by Lemma~\ref{exist-equi},
 $(m_p\otimes\delta_0)\circ\pi$ is an equilibrium state for the potential $\phi$.
\end{proof}

In order to construct a non-trivial equilibrium state,
we aim to verify sufficient conditions in
\cite[Theorem~2.2.9]{MauUrb03} and
\cite[Corollary~2.7.5]{MauUrb03} for the existence and uniqueness of a shift-invariant Gibbs-equilibrium state for a countable Markov shift.
%for the potential $\Phi$.
We fix a metric $d$ that generates the topology on ${\hat{\mathcal A}}^\mathbb N$ 
by setting $d(\underline{\hat a},\underline{\hat b})=\exp(-\inf\{n\geq1\colon \hat a_n\neq \hat b_n\})$
where $\exp(-\infty)=0$ by convention.
\begin{lemma}\label{holder}
There exist $C>0$ and $\tau_0>0$ such that
for all $\hat a\in\hat{\mathcal A}$ and all $\hat{\underline{b}}$, $\hat{\underline{c}}\in[\![\hat a]\!]$ we have
$|\Phi(\hat{\underline{b}})-\Phi(\hat{\underline{c}})|\leq C\cdot d(\hat{\underline{b}},\hat{\underline{c}})^{\tau_0}.$
\end{lemma}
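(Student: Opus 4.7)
\smallskip

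\noindent\textbf{Proof plan for Lemma~\ref{holder}.}

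The plan is to reduce the Hölder estimate to a distortion estimate for a single branch of the induced map, and then translate distances between projections to the induced metric via the uniform expansion of $\hat\sigma$. First I would exploit the explicit form of $\Phi$ on the one-cylinder. Write $\hat a = [a\mathbf{i}^{j}3] \cup [a\mathbf{i}^{j}4]$ with $a\in\{3,4\}$, $\mathbf{i}^{j}=i_1\cdots i_j \in\{1,2\}^j$, so that $t\equiv j+1$ on $[\![\hat a]\!]$ and $\Phi=S_{j+1}\phi$ there. Using the identity $\exp S_n\phi(\underline{a})=Q_p(\omega_1\cdots\omega_n)|(T_\omega^n)'x|^{-1}$, I obtain for $\hat{\underline b}\in[\![\hat a]\!]$
\[
\Phi(\hat{\underline b}) \;=\; \sum_{k=1}^{j+1}\log p_{i(\hat b_k)} \;-\; \log\bigl|F_{\hat a}'(x(\hat{\underline b}))\bigr|,
\]
where $F_{\hat a}=f_{a i_1\cdots i_j}$. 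Since the first $j+1$ symbols in $\mathcal A$ coincide for any two elements of $[\![\hat a]\!]$, the $p$-factor is constant on $[\![\hat a]\!]$, so
\[
\bigl|\Phi(\hat{\underline b})-\Phi(\hat{\underline c})\bigr|\;=\;D\bigl(F_{\hat a},x(\hat{\underline b}),x(\hat{\underline c})\bigr).
\]

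The core step is a uniform distortion estimate for $F_{\hat a}$. I would use (B4): each $T_a$ extends to a $C^3$ map with negative Schwarzian on an interval strictly larger than $J(a)$, and since negative Schwarzian is preserved under composition, $F_{\hat a}$ extends to a $C^3$ diffeomorphism with negative Schwarzian onto an interval strictly containing $X$, with a definite amount of ``slack'' on each side coming from the initial uniform branch (B3) (the innermost factor $T_{i(a)}|_{J(a)}$ is uniformly expanding by at least $\gamma>1$, which gives room to thicken after composition). The Koebe distortion principle then yields a Lipschitz-type distortion bound
\[
D(F_{\hat a},x,y)\;\leq\;C_{\star}\,\bigl|F_{\hat a}(x)-F_{\hat a}(y)\bigr|\qquad\text{for }x,y\in J(\iota(\hat a)),
\]
with $C_{\star}>0$ independent of $\hat a$. (In principle one could instead try to sum Lemma~\ref{dist-basic} directly, but that naive sum has $j+1$ terms and does not remain bounded as $j\to\infty$; this is why Koebe via (B4) is essential.)

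Finally, I would translate image distance to the induced metric. Assume $d(\hat{\underline b},\hat{\underline c})=e^{-m}$ with $m\geq 2$; then $\hat{\underline b},\hat{\underline c}\in[\![\hat a_1\cdots\hat a_{m-1}]\!]$ with $\hat a_1=\hat a$. Because each induced branch $F_{\hat a_k}$ starts with a uniformly expanding map $T_{i(a_k)}|_{J(a_k)}$ with $a_k\in\{3,4\}$, (B3) yields $|F_{\hat a_k}'|\geq\gamma$, hence the iterated induced map has expansion at least $\gamma^{m-2}$ beyond the first step. Since $F_{\hat a_1}(x(\hat{\underline b})),F_{\hat a_1}(x(\hat{\underline c}))$ lie in the common cylinder $J(\iota(\hat a_2\cdots\hat a_{m-1}))$, this gives
\[
\bigl|F_{\hat a_1}(x(\hat{\underline b}))-F_{\hat a_1}(x(\hat{\underline c}))\bigr|\;\leq\;\bigl|J(\iota(\hat a_2\cdots\hat a_{m-1}))\bigr|\;\leq\;\gamma^{-(m-2)}|X|.
\]
Combining this with the distortion bound produces
\[
\bigl|\Phi(\hat{\underline b})-\Phi(\hat{\underline c})\bigr|\;\leq\;C_{\star}|X|\,\gamma^{2}\cdot e^{-m\log\gamma}\;=\;C\,d(\hat{\underline b},\hat{\underline c})^{\tau_0},
\]
with $\tau_0=\log\gamma$ and $C=C_{\star}|X|\gamma^{2}$, both independent of $\hat a$.

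\smallskip

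\noindent\textbf{Main obstacle.} The hard part is obtaining the distortion bound of the middle step uniformly over $\hat a$, i.e.\ uniformly in the length $j$ of the segment $\mathbf{i}^{j}$ spent near the neutral fixed point. The intermediate compositions $f_{i_k}\circ\cdots\circ f_{i_1}$ are only non-uniformly expanding, so a direct one-step distortion argument degrades linearly in $j$. The negative Schwarzian hypothesis (B4) and the uniform expansion (B3) at the entry step together are what allow a genuine Koebe-type argument to give a bound independent of $j$; verifying that the Koebe ``space'' does not collapse as $j\to\infty$ is the delicate point and uses the specific structure that $F_{\hat a}$ maps onto all of $X$ with a uniform buffer from (B3).
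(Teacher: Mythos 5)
Your proposal follows essentially the same route as the paper: a Koebe/negative-Schwarzian distortion bound that is uniform in the excursion length $j$, combined with the uniform expansion $\gamma$ of each induced branch (via (B3)) to convert the image distance into $d(\hat{\underline{b}},\hat{\underline{c}})^{\log\gamma}$. The one adjustment needed is that (B4) gives negative Schwarzian only for the branches over $J(1)$ and $J(2)$, so Koebe applies to $f_{\mathbf{i}^j}$ rather than to the full composition $F_{\hat a}=f_{\mathbf{i}^j}\circ f_a$ (the branches over $J(3),J(4)$ are merely $C^{1+\tau}$); the single entry branch $f_a$ contributes a separately and trivially controlled distortion term, which is exactly how the paper's displayed distortion estimate is organized.
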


%The distortion of random compositions near $0$  is controlled by the following lemma. 
%For $n\geq1$ write
%$1^n=11\cdots 1\in\mathcal A^n$. %, the $n$-string of $1$.
%\begin{lemma}\label{c-dist}
%There exists $C>0$ such that for all
%$n\geq1$, $\bold i_n\in\{1,2\}^n$, $a\in\{3,4\}$ and 
%$x,y\in J(\bold i_na)$ we have
%\[D(f_{\bold i_n},x,y)\leq C
%|f_{\bold i_n}(x)-f_{\bold i_n}(y)|.\]
% \end{lemma}
 %\begin{proof}
%Note that $f_{1^n}=f_1^n$. 
 %For $x,y\in J_{1^{n}a}$,
% $f_{1^{n-1}}(x)$ and $f_{1^{n-1}}(y)$ are contained in 
 %$J_1\cap f_1^{-1}(J(a))$.
%Follows from (B4) and the bounded distortion result \cite[Chapter~IV, Theorem~1.2]{dMevSt93} based on Koebe's principle.
 %there exists $C>0$
 % such that for all $x,y\in J_{1^{n}a}$ we have
 %\[\log\frac{|(f_{1^{n-1}})'x|}{|(f_{1^{n-1}})'y|}\leq C
%|f_{1^{n-1}}(x)-f_{1^{n-1}}(y)|,\]
%Therefore
% \[\begin{split}\log\frac{|(f_{1^n})'x|}{|(f_{1^n})'y|}&= \log\frac{|(f_{1^{n-1}})'x|}{|(f_{1^{n-1}})'y|}+
% \log\frac{|f_1'(f_{1^{n-1}}(x))|}{|f_1'(f_{1^{n-1}}(y))|}
%\leq C|f_{1^n}(x)-f_{1^n}(y)|,\end{split}\]
%as required.
% \end{proof}

\begin{proof}

From (B4) and the bounded distortion result \cite[Chapter~IV, Theorem~1.2]{dMevSt93}
 based on Koebe's principle,
there exists $C>0$ such that for all
$n\geq1$, $\bold i^n\in\{1,2\}^n$, $a\in\{3,4\}$ and 
$x,y\in J(\bold i^na)$ we have
\[D(f_{\bold i^n},x,y)\leq C
|f_{\bold i^n}(x)-f_{\bold i^n}(y)|.\]
%From Lemmas~\ref{dist-basic} and
%\ref{c-dist}, 
Hence, there exists $C>0$ such that
 for all $\hat{\underline{b}}$, $\hat{\underline{c}}\in[\![\hat a]\!]$ with $\hat{\underline{b}}\neq\hat{\underline{c}}$ we have
\begin{equation}\label{holder-eq1}\Phi(\hat{\underline{b}})-\Phi(\hat{\underline{c}})\leq C|F_{\hat a}(\Pi(\pi(\hat{\underline{b}})))-F_{\hat a}(\Pi(\pi(\hat{\underline{c}})))|,\end{equation}
%where
%$x=\Pi(\pi(\hat{\underline{b}}))$ and $y=\Pi(\pi(\hat{\underline{c}}))$,
see \eqref{diagram}.
We have $\hat a=\hat b_1=\hat c_1$,
 %Indeed, %since
 %$b_1=B_{a_1}^{j_1}$,
%   $a_1\in\mathcal A\setminus\{1\}$ and $j_1\geq1$,
%   the inequality
%for the case $j=1$ and that for the case $j\geq2$
%are consequences of Lemmas~\ref{dist-basic} and
%\ref{c-dist} respectively.
 %By (B3), for all $b_1\cdots b_{q_0}\in\mathbb B^{q_0}$ that is admissible we have
%\begin{equation}\label{holder-eq1}\inf_{J_{b_1\cdots b_{q_0}}}|Df_{b_1\cdots b_{q_0}}|\geq\gamma_0.\end{equation}
%Since $|DT_i|\geq1$ on $X$ for all $1\leq i\leq N$ 
%and $|DT_1|_{J_1}(x)|>1$ except for $x=0$,
%there exists a constant $\gamma_1>1$ such that
%for any $e\in\mathbb B$ that is not an element of $\mathbb B$,
%\begin{equation}\label{holder-eq2}\inf_{J_{e}}|Df_{e}|\geq\gamma_1.\end{equation}
%From \eqref{holder-eq1} and \eqref{holder-eq2},
% there exist constants $C_1>0$ and $\gamma_2>1$ such that for any $n\geq1$ and $b_1\cdots b_n\in\mathbb B^n$,
%$\inf_{J_{b_1\cdots b_n}}|Df_{e_1\cdots e_{n}}|\geq C_1\gamma_2^{n}.$
and there exists an integer $k\geq2$ such that
$d(\hat{\underline{b}},\hat{\underline{c}})=e^{-k}$. 
By (B2) and the mean value theorem,
in the case $k\geq3$ we have
\begin{equation}\begin{split}\label{holder-eq2}|F_{\hat b_1}(\Pi(\pi(\hat{\underline{b}})))-F_{\hat b_1}(\Pi(\pi(\hat{\underline{c}})))|&\leq 
\frac{|F_{\hat b_1\cdots \hat b_{k-1}}
(\Pi(\pi(\hat{\underline{b}})))-F_{\hat b_1\cdots \hat b_{k-1}}
(\Pi(\pi(\hat{\underline{c}})))|}{ \inf|F_{\hat b_2\cdots \hat b_{k-1}}'|}\\
&\leq \gamma^{-k+2}.\end{split}\end{equation}
Put $\tau_0=\log\gamma$. 
Combining \eqref{holder-eq1} and \eqref{holder-eq2}
we obtain the desired inequality. The case $k=2$ is covered by \eqref{holder-eq1}.
\end{proof}

\begin{lemma}\label{zero-p}
We have 
$P(\Phi)=0$ and
$\sum_{\hat a\in\hat{\mathcal A}}\sup_{[\![\hat a]\!]}\exp\Phi<+\infty$.
\end{lemma}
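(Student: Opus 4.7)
The plan is to establish the two assertions separately: first the summability, then $P(\Phi)=0$ via matching upper and lower bounds.

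For the summability $\sum_{\hat a\in\hat{\mathcal A}}\sup_{[\![\hat a]\!]}\exp\Phi<\infty$, I would exploit the uniform H\"older continuity of $\Phi$ from Lemma~\ref{holder}. Since any $\underline b,\underline c\in[\![\hat a]\!]$ satisfy $d(\underline b,\underline c)\leq e^{-1}$, Lemma~\ref{holder} gives $|\Phi(\underline b)-\Phi(\underline c)|\leq C$ for a universal constant. For $\hat a=[a\bold i^j 3]\cup[a\bold i^j 4]$, combining this bounded oscillation with the mean value theorem applied to $f_{a\bold i^j}$ restricted to the subintervals $J(a\bold i^j 3)$ and $J(a\bold i^j 4)$ produces, for each $c\in\{3,4\}$, a point $x^*\in J(a\bold i^j c)$ with $|(f_{a\bold i^j})'(x^*)|^{-1}=|J(a\bold i^j c)|/|J(c)|$, and hence $\sup_{[a\bold i^j c]}\exp S_{j+1}\phi\leq e^{C}p_{i(a)}\prod p_{i(i_k)}|J(a\bold i^j c)|/|J(c)|$. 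Rewriting the right-hand side using the identity $(m_p\otimes{\rm Leb})(\varDelta(a\bold i^j c))=p_{i(a)}\prod p_{i(i_k)}p_{i(c)}|J(a\bold i^j c)|$ yields $\sup_{[\![\hat a]\!]}\exp\Phi\leq K\cdot(m_p\otimes{\rm Leb})([\![\hat a]\!])$ with $K=e^C/\min(p_1|J(3)|,p_2|J(4)|)$. Since the cylinders $\{[\![\hat a]\!]\}_{\hat a}$ are pairwise disjoint in $\Sigma$ and their images under $\pi$ are pairwise disjoint in $\Lambda$, summing gives $\sum_{\hat a}\sup\exp\Phi\leq K(m_p\otimes{\rm Leb})(\Lambda)=K|X|<\infty$.

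For $P(\Phi)\leq 0$, I would invoke Abramov--Kac inducing. Any $\hat\nu\in\mathcal M(\hat\Sigma,\hat\sigma|_{\hat\Sigma})$ with $\int t\,d\hat\nu<\infty$ and $\int\Phi\,d\hat\nu>-\infty$ lifts, via the Kakutani tower construction, to a $\sigma$-invariant probability $\nu$ on $\Sigma$ with $\nu([3]\cup[4])=1/\int t\,d\hat\nu$, $h(\nu)=\nu([3]\cup[4])h(\hat\nu)$, and the induced-integral identity $\int\phi\,d\nu=\nu([3]\cup[4])\int\Phi\,d\hat\nu$. Hence $F(\nu)=\nu([3]\cup[4])(h(\hat\nu)+\int\Phi\,d\hat\nu)$, and since $F(\nu)\leq P(\phi)=0$ by Lemma~\ref{exist-equi} and \eqref{VP}, the induced free energy is non-positive. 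The variational principle \eqref{VarP} then yields $P(\Phi)\leq 0$.

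For $P(\Phi)\geq 0$, I would iterate the distortion argument. Applying Lemma~\ref{holder} along each iterate, the oscillation of $\sum_{k=0}^{n-1}\Phi\circ\hat\sigma^k$ on $[\![\hat a_1\cdots\hat a_n]\!]$ is bounded by $\sum_{k=0}^{n-1}Ce^{-(n-k)\tau_0}\leq C/(1-e^{-\tau_0})$, uniformly in $n$. The same MVT-plus-distortion argument, now applied to $f_{\iota(\hat a_1\cdots\hat a_n)}$, produces a constant $K'>0$ with
\[\inf_{[\![\hat a_1\cdots\hat a_n]\!]}\exp\sum_{k=0}^{n-1}\Phi\circ\hat\sigma^k\geq K'\cdot(m_p\otimes{\rm Leb})([\![\hat a_1\cdots\hat a_n]\!]),\]
so summing with disjointness yields $Z_n(\Phi)\geq K'(m_p\otimes{\rm Leb})(\hat\Sigma)$ for every $n$. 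Hence $(1/n)\log Z_n(\Phi)\to 0$ provided $(m_p\otimes{\rm Leb})(\hat\Sigma)>0$. The hardest step will be verifying this positivity, which amounts to showing that a positive $(m_p\otimes{\rm Leb})$-measure of points in $[3]\cup[4]$ return to $[3]\cup[4]$ infinitely often under $\hat\sigma$. In cases (a) and (c) of Proposition~\ref{ex2-prop} this follows from the existence of a finite absolutely continuous stationary measure and Poincar\'e recurrence; in case (b), where only $(m_p\otimes\delta_0)\circ\pi$ is a stationary probability, one must argue conservativity of $R$ for $m_p\otimes{\rm Leb}$ directly, using the non-uniform expansion away from the neutral fixed point to ensure recurrence of $(m_p\otimes{\rm Leb})$-a.e.\ orbit.
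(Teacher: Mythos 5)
Your argument is correct, and for the summability and the lower bound $P(\Phi)\geq 0$ it follows the paper's route: Lemma~\ref{holder} gives a distortion bound on the Birkhoff sums of $\Phi$ that is uniform in $n$ (the oscillation on $[\![\hat a_1\cdots\hat a_n]\!]$ is at most $C\sum_{k}e^{-k\tau_0}$), the mean value theorem converts this into a two-sided comparison between $\sup\exp\sum_{k=0}^{n-1}\Phi\circ\hat\sigma^k$ and the $(m_p\otimes{\rm Leb})$-mass of the corresponding cylinder in $\Lambda$, and one sums over the pairwise disjoint cylinders. Where you genuinely diverge is the upper bound $P(\Phi)\leq 0$: the paper reads it off the same two-sided comparison (the partition sums are bounded above by $1$), whereas you route it through the Kakutani tower, Abramov--Kac, $P(\phi)=0$ and the variational principle \eqref{VarP}. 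That is a valid alternative, but it needs one extra line: the supremum in \eqref{VarP} is over all $\hat\nu$ with $\int\Phi\,d\hat\nu>-\infty$, while your tower argument only covers those with $\int t\,d\hat\nu<\infty$. Since $|T_i'|\geq 1$ on each branch, $\phi\leq\log\max(p_1,p_2)<0$, hence $\Phi\leq t\log\max(p_1,p_2)$ and $\int\Phi\,d\hat\nu>-\infty$ already forces $\int t\,d\hat\nu<\infty$; with that remark your upper bound is complete (though the distortion upper bound you already proved for the summability gives it more cheaply). Finally, the positivity $(m_p\otimes{\rm Leb})(\pi(\hat\Sigma))>0$ that you flag as the hardest step is in fact elementary here and needs neither Poincar\'e recurrence nor a separate conservativity argument in case (b) of Proposition~\ref{ex2-prop}: for each fixed $\omega$ there is exactly one word in $\{1,2\}^n$ compatible with $\omega_1\cdots\omega_n$, so by Lemma~\ref{u-decay} the set of $x$ whose $\omega$-orbit never enters $J(3)\cup J(4)$ is the intersection of a single nested sequence of shrinking intervals, hence Lebesgue-null; a countable union over starting times shows almost every point returns infinitely often, giving $(m_p\otimes{\rm Leb})(\pi(\hat\Sigma))=p_1|J(3)|+p_2|J(4)|>0$. (The paper uses this fact implicitly when asserting that the cylinder masses sum to $1-p_1|J(1)|-p_2|J(2)|$ for every $n$.)
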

\begin{proof}
From Lemma~\ref{holder}, there exists $C>0$ such that for all $n\geq1$ and $\hat a_1\cdots \hat a_n\in \hat{\mathcal A}^{n}$ we have
\[\sup_{\underline{b},\underline{c}\in[\![\hat a_1\cdots \hat a_n]\!]}\left( \sum_{k=0}^{n-1}\Phi(\hat\sigma^k\hat{\underline{b}})-\sum_{k=0}^{n-1}\Phi(\hat\sigma^k
\hat{\underline{c}})\right)\leq C\sum_{k=0}^{n-1}e^{-k\tau_0}\leq \frac{C}{e^{\tau_0}-1}.\]
This implies
\[(m_p\otimes{\rm Leb})\left(\hat\varDelta(\hat a_1\cdots \hat a_n)\right)\asymp
\sup_{[\![\hat a_1\cdots \hat a_n]\!]} 
\exp\left(\sum_{k=0}^{n-1}\Phi\circ\hat\sigma^k\right),\]
%For two positive reals $A,B$, the expression $A\asymp B$ indicates that $A/B$ is bounded away from zero and infinity. 
where $\asymp$ indicates that there exists a constant $C>1$ such that the ratio of the two numbers are bounded below by $C^{-1}$ and above by $C$ for any $n\geq1$.
%Since $\mu_p$ is a probability and $n$-cylinders are pairwise disjoint
Since
$\sum_{\hat a_1\cdots \hat a_n\in \hat{\mathcal A}^n}(m_p\otimes{\rm Leb})(\hat\varDelta(\hat a_1\cdots \hat a_n))=1-p_1|J(1)|-p_2|J(2)|>0$,
rearranging the double inequalities,
summing the results over all $\hat a_1\cdots \hat a_n\in \hat {\mathcal A}^n$, and then taking logarithms, dividing by $n$ and letting $n\to\infty$ yields $P(\Phi)=0$. 
The second claim follows from combining these estimates with $n=1$.
\end{proof}

By \cite[Corollary~2.7.5]{MauUrb03}
together with Lemmas~\ref{holder} and \ref{zero-p},
there exists a unique measure $\hat\mu_p\in\mathcal M(\hat\Sigma,\hat\sigma|_{\hat\Sigma})$ with 
the Gibbs property, namely,
for $n\geq1$ and $\hat a_1\cdots \hat a_n\in {\hat{\mathcal A}}^n$ we have
\begin{equation}\label{gibbs}
\hat\mu_p[\![\hat a_1\cdots \hat a_n]\!]\asymp
\sup_{[\![\hat a_1\cdots \hat a_n]\!]}\exp\sum_{k=0}^{n-1}\Phi\circ\hat\sigma^k.\end{equation}
%Note that the support of $\hat\mu_p$ is contained in $[3]\cup[4]$.
%Moreover, the measure in $\mathcal M(\hat\Sigma,\hat\sigma|_{\hat\Sigma})$ with the Gibbs property is unique. 

%The pressure $\hat{P}(\theta)$ is finite because
%\[\begin{split}\hat{P}(\theta)&\leq\sum_{n=1}^\infty\sum_{e\in \mathscr{E},\tau(e)=n}\sup_{[e]}\exp\left(\hat\phi-\theta n\right)=\sum_{n=1}^\infty\sum_{c\in \mathscr{B}}\sup_{E_{c}^n}\exp\left(\hat\phi-\theta n\right)\\
%&\leq\sum_{n=1}^\infty \left(\max_{1\leq i\leq N}p_i\right)^ne^{-\theta n}\textcolor{red}{K}\sum_{c\in \mathscr{B}}|J_c|\leq\textcolor{red}{K}
%N|X|\sum_{n=1}^\infty \left(\max_{1\leq i\leq N}p_i\right)^ne^{-\theta n},
%\end{split}\]
%and the last series is finite
%by the lower bound on $\theta$.

\begin{lemma}\label{finite} In case (a) or (c) of Proposition~\ref{ex2-prop},
$\int t d\hat\mu_p<+\infty$ and $\int\Phi d\hat\mu_p<+\infty$.
In case (b) of Proposition~\ref{ex2-prop},
$ \int t d\hat\mu_p=+\infty.$\end{lemma}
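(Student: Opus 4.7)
The plan is to use the Gibbs property \eqref{gibbs} to reduce all three assertions to tail estimates on the geometric sums
\[ s_j := \sum_{\bold i^j \in \{1,2\}^j} Q_p(\bold i^j)\,|J(\bold i^j)|,\quad j \geq 0,\]
and then to sandwich $s_j$ between quantities controlled by the deterministic dynamics of $T_1$ and $T_2$. Applied to one-letter cylinders in $\hat{\mathcal A}$, \eqref{gibbs} gives $\hat\mu_p[\![[a\bold i^j 3]\cup[a\bold i^j 4]]\!]\asymp Q_p(\bold i^j)|J(a\bold i^j)|$; the uniform expansion (B3) of $f_a$ for $a\in\{3,4\}$ together with bounded distortion from (B4) via Koebe's principle yield $|J(a\bold i^j)|\asymp|J(\bold i^j)|$, so that $\hat\mu_p\{t=j+1\}\asymp s_j$ and hence
\[\int t\,d\hat\mu_p \;\asymp\;\sum_{j\ge 0}(j+1)\,s_j.\]
Moreover, on each cylinder $[\![\hat a]\!]$ one has $\Phi=\log Q_p(\bold i^j)+\log|J(\bold i^j)|+O(1)$, and both summands are bounded by $C(j+1)$; consequently $\int|\Phi|\,d\hat\mu_p\lesssim\int t\,d\hat\mu_p$, reducing the second assertion in cases (a) and (c) to the first.

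For case (a), Remark~\ref{rem-acim} together with (B4) and the minimum principle yield that $T_2$ or $T_2^2$ is uniformly expanding on $J(2)$ with some rate $\gamma>1$. Bounded distortion then gives $|J(\bold i^j)|\le C\gamma^{-k}$ where $k$ is the number of occurrences of the symbol $2$ in $\bold i^j$, and summing over $k$ yields $s_j\le C(p_1+p_2/\gamma)^j$, which decays exponentially. For cases (b) and (c), the hypotheses that both maps are monotonically increasing near $0$ and that $T_1\ge T_2$ on $J(1)\cap J(2)$ give, by straightforward induction, the sandwich
\[ T_2^k(x)\le T_\omega^k(x)\le T_1^k(x),\qquad k=0,\ldots,j,\]
whenever the $R$-orbit of $(\omega,x)$ stays in $\Omega\times(J(1)\cup J(2))$. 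The full-branch assumption (B1) combined with $T_i(\sup J(i))=\sup X$ and the one-to-one-ness of $T_1$ on $J(1)$ force $J(1)\subset J(2)$. From this containment I would deduce the two inclusions $J(1^j)\subset J(\bold i^j)\subset J(2^j)$: the upper one, because $T_\omega^k(x)\le\sup J(2)$ on the orbit forces $T_2^k(x)\le T_\omega^k(x)\le\sup J(2)$; the lower one, because $T_\omega^k(x)\le T_1^k(x)\le\sup J(1)\le\sup J(2)$ places $T_\omega^k(x)$ in $J(\omega_{k+1})$ for either value of $\omega_{k+1}\in\{1,2\}$. Hence $|J(1^j)|\le s_j\le |J(2^j)|$.

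The main obstacle is then the Abramov--Kac-type identification
\[ \sum_{j\ge 0}(j+1)\,|J(i^j)|\;\asymp\;\int_{J(i)}t_i(x)\,dx\qquad(i=1,2),\]
which converts the hypotheses on $t_i$ into the desired finiteness or divergence of the sandwich bounds. The idea is to apply Kac's formula to the first-return induced map of $T_i$ to $J(i)$: by (B1), (B3) and bounded distortion this induced map is fully branched and uniformly expanding on each cylinder, so it carries a Gibbs measure equivalent to Lebesgue on $J(i)$ with density bounded above and below, and Kac's identity converts the cumulative sum $\sum_{j}(j+1)|J(i^j)|$ into a constant multiple of $\int_{J(i)}t_i\,dx$. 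Combining this with the sandwich above yields $\int t\,d\hat\mu_p<\infty$ in case (c) from the upper bound together with $\int_{J(2)}t_2\,dx<\infty$, and $\int t\,d\hat\mu_p=+\infty$ in case (b) from the lower bound together with $\int_{J(1)}t_1\,dx=+\infty$.
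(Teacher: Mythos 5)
Your reduction to the sums $s_j$ contains a moment-counting error that breaks case (c). The Gibbs property \eqref{gibbs} applied to the one-letter cylinder $[\![\hat a]\!]$ with $\hat a=[a\bold i^j3]\cup[a\bold i^j4]$ gives $\hat\mu_p[\![\hat a]\!]\asymp\sup_{[\![\hat a]\!]}\exp\Phi\asymp Q_p(\bold i^j)\bigl(|J(a\bold i^j3)|+|J(a\bold i^j4)|\bigr)$: what appears is the length of the portion of the cylinder that \emph{exits} the bad region at the next step, not the length of the full cylinder $J(a\bold i^j)$. These two lengths are not uniformly comparable in $j$, because the distortion of $f_{\bold i^j}$ on all of $J(\bold i^j)$ degenerates at the preimage of the neutral fixed point (Lemma~\ref{holder} controls distortion only on $J(\bold i^ja)$ with $a\in\{3,4\}$); for an L-S-V-type branch one has $|J(1^j)|\sim j^{-1/\alpha}$ while $|J(1^j3)|\sim j^{-1-1/\alpha}$. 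Consequently the correct relation is $\hat\mu_p\{t\geq j+1\}\asymp s_j$ (a tail, not a level set), so that $\int t\,d\hat\mu_p\asymp\sum_{j\geq0}s_j$, not $\sum_j(j+1)s_j$. Your final ``Abramov--Kac identification'' is off by the same one moment: essentially by the layer-cake formula $\sum_{j\geq1}|J(i^j)|=\int_{J(i)}t_i\,dx$ exactly (no induced Gibbs measure or Kac formula is needed here), whereas $\sum_j(j+1)|J(i^j)|\asymp\int_{J(i)}t_i^2\,dx$. Feeding your sandwich $|J(1^j)|\leq s_j\leq|J(2^j)|$ into the expression $\sum_j(j+1)s_j$ would therefore establish finiteness in case (c) only under the stronger hypothesis $\int_{J(2)}t_2^2\,dx<+\infty$, which fails, e.g., for L-S-V maps with parameter in $[1/2,1)$ even though $\int_{J(2)}t_2\,dx<+\infty$ there.

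Once the two errors are corrected they cancel: $\int t\,d\hat\mu_p\asymp\sum_j s_j$ combined with $|J(1^j)|\leq s_j\leq|J(2^j)|$ and the layer-cake identity yields precisely the dichotomy of cases (b) and (c), and this is in substance the paper's argument, which instead compares $t$ pointwise with $t_1$ and $t_2$ via \eqref{t1t2} and integrates against $(m_p\otimes{\rm Leb})\circ\pi$. Your monotonicity/sandwich mechanism, your treatment of case (a), the reduction of $\int|\Phi|\,d\hat\mu_p$ to $\int t\,d\hat\mu_p$, and case (b) (where $\sum_j s_j\geq\sum_j|J(1^j)|=\int_{J(1)}t_1\,dx=+\infty$ already suffices) are all sound; but as written the proof of case (c) has a genuine gap.
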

\begin{proof}Since $t$ is constant on each $1$-cylinder $[\![\hat a]\!]$, $\hat a\in \hat{\mathcal A}$ we denote this constant value by 
$t(\hat a)$. For all $\hat a\in\hat{\mathcal A}$ we have
 \[\sup_{[\![\hat a]\!]}|\Phi|\leq
t(\hat a)\left(\max_{1\leq i\leq 2}|\log p_i|+\max_{1\leq i\leq 2}\sup\log|T_i'|\right).\]
Hence, the finiteness of
$\int\Phi d\hat\mu_p$ follows from that of
 $\int td\hat\mu_p$.

%We first show the finiteness of $\int td\hat\mu_p$ in case (a). 
In case (a), $0$ is not a neutral fixed point of $T_2$ as in Remark~\ref{rem-acim}. Hence,
there exists $\rho>1$ such that
$|T_2'(x)|\geq \rho$
for all $x\in J(21)\cup J(22)$.
%J_2\cap T_2^{-1}(J(1)\cup J(2))$.
Put
\[\zeta=p_1+p_2\rho^{-1}\in(0,1).\] From \eqref{gibbs}, there exists $C>0$ such that for $a\in\{3,4\}$ we have
\[\sum_{\stackrel{\hat a\in \hat{\mathcal A}}{t(\hat a)=n,[\![\hat a]\!]\subset[a]}} \hat\mu_p[\![\hat a]\!]\leq
C\left(\sup_{[11]\cup[12]\cup[21]\cup[22]}\exp\phi\right)^n\leq C\zeta^n.\]
%For $n\geq2$ we have $\#\{b\in\mathbb B\colon t(b)=n\}=\#\mathcal A-1$, and
% By \eqref{gibbs}, there exists $C>0$ such that if $n\geq2$, $\hat a\in\hat{\mathcal A}$
% and $t(\hat a)=n$ then
%$\hat\mu_p[\![\hat a]\!]\leq Cp_1^{n-1}$.
Since $t\equiv1$ on $[33]\cup[34]\cup[43]\cup[44]$, we obtain
\[\begin{split}\int t d\hat\mu_p&=\hat\mu_p([33]\cup[34]\cup[43]\cup[44])+
\sum_{n=2}^\infty n\sum_{a=3,4}\sum_{\stackrel{\hat a\in \hat{\mathcal A}}{t(\hat a)=n,[\![\hat a]\!]\subset[a]}} \hat\mu_p[\![\hat a]\!]\\
&\leq1+2C\sum_{n=2}^\infty  n\zeta^n<+\infty,
\end{split}\]
as required.

Since $\hat\mu_p$ has no atom, we may exclude from further consideration all those points in $\hat\Sigma$ at which $\pi$ is not one-to-one.
In case (b) or (c), the common assumption
 $T_1(x)\geq T_2(x)$ for all $x\in J(1)\cap J(2)$ 
implies that
there exists $C>0$ such that 
%for any $(\omega,x)\in \varDelta(31)\cup \varDelta(41)$ we have
%\[t_1(T_1(x))-C\leq t(\pi^{-1}(\omega,x))\leq
%t_2(T_1(x))+C.\]
for $a=3,4$, $i=1,2$ and for all $(\omega,x)\in \varDelta(ai)$ we have
\begin{equation}\label{t1t2}t_1(T_i(x))-C\leq t(\pi^{-1}(\omega,x))\leq
t_2(T_i(x))+C.\end{equation}
%By the assumption $J(1)=J(2)$ and $T_1(x)\geq T_2(x)$ for any $x\in J(1)$,
%The assumption  $T_1(x)\geq T_2(x)$ for any $x\in J(1)\cap J(2)$
%implies that there exists $C>0$ such that 
%for any $(\omega,x)\in \varDelta(31)$ we have
%\[t_1(T_1(x))\leq t(\pi^{-1}(\omega,x))+C.\]
By the first inequality in \eqref{t1t2}, 
for all $\omega\in\Omega$ such that $\omega_1=a-2$ we have
\begin{equation}\label{infty-eq1} \int_{\varDelta(ai)_\omega} t(\pi^{-1}(\omega,x))
dx \geq\int_{\varDelta(ai)_\omega} t_1(T_i(x))dx-C.\end{equation}
Recall that $\varDelta(ai)_\omega$ denotes the $\omega$-section of $\varDelta(ai)$.
 %the assumption 
% $T_1(x)\geq T_2(x)$ for $x\in J(1)\cap J(2)$
%implies that 
%there exists $C>0$ such that 
%for any $a\in\{3,4\}$ and any $(\omega,x)\in \varDelta(a1)$ we have
%\[t(\pi^{-1}(\omega,x))\leqt_2(T_1(x))+C.\]
By the second inequality in \eqref{t1t2}, 
for all $\omega\in\Omega$ such that $\omega_1=a-2$ %$\omega\in[a-2]$ 
we have
\begin{equation}\label{infty-eq10} \int_{\varDelta(ai)_\omega} t(\pi^{-1}(\omega,x))
dx\leq\int_{\varDelta(ai)_\omega} t_2(T_i(x))dx+C.\end{equation} 
By Fubini's theorem, for $a=3,4$ and $i=1,2$ we have
\begin{equation}\label{infty-eq20}\begin{split}
\int_{[ai]}  t d(m_p\otimes{\rm Leb})\circ\pi&=\int_{\varDelta(ai)}  t(\pi^{-1}(\omega,x))d(m_p\otimes{\rm Leb})\\
&=\int dm_p(\omega)\int_{\varDelta(ai)_\omega} t(\pi^{-1}(\omega,x))
dx.\end{split}\end{equation}
%In the same way we obtain the finiteness of $\int_{[a2]}  t d(m_p\otimes{\rm Leb})\circ\pi$ for $a=3,4$.
Since the density $d\hat\mu_p/d(m_p\otimes{\rm Leb})\circ\pi$
is uniformly bounded away from zero and infinity almost everywhere,
in case (b) we obtain  $\int td\hat\mu_p=+\infty$ 
from \eqref{infty-eq1} and \eqref{infty-eq20}.
In case (c) we obtain
 $\int td\hat\mu_p<+\infty$ 
from \eqref{infty-eq10} and \eqref{infty-eq20}.
\end{proof}
By the finiteness of $\int t d\hat\mu_p$ in Lemma~\ref{finite},
the $\sigma$-invariant measure
\[\sum_{n=1}^\infty\sum_{k=0}^{n-1}
\hat\mu_p|_{\{t=n\}}\circ \sigma^{-k}\] can be normalized to a probability, denoted by $\mu_p$.
By Abramov-Kac's formula connecting entropies of $\mu_p$ and $\hat\mu_p$, and integrals of functions against $\mu_p$ and $\hat\mu_p$,
we have \begin{equation}\label{AK}h(\hat\mu_p)+\int\Phi d\hat\mu_p=F(\mu_p)\int t d\hat\mu_p.\end{equation}
By \cite[Theorem~2.2.9]{MauUrb03} together with the finiteness of $\int\Phi d\hat\mu_p$ in Lemma~\ref{finite},
 $\hat\mu_p$ is the unique equilibrium state for the potential 
$\Phi$, namely
\begin{equation}\label{AKP}P(\Phi)=h(\hat\mu_p)+\int\Phi d\hat\mu_p.\end{equation}
From $P(\Phi)=0$ in Lemma~\ref{zero-p}, \eqref{AK} and \eqref{AKP} we obtain $F(\mu_p)=0$.
%\textcolor{red}{Hence, for any $\theta<0$ there is $\nu\in\mathcal M(\Sigma,\sigma)$ such that $\nu(K)>0$ and $h(\nu)+\int\phi d\nu\geq \theta.$ If $\nu\in\mathcal M(\Sigma,\sigma)$ and $h(\nu)>0$ then $\nu(\Lambda)>0$. Recall that $P(\phi)=0$. The free energy of measures in $\mathcal M(\Sigma,\sigma)$ with zero entropy does not exceed $\max_{1\leq i\leq N}\log p_i<0$. For any $\epsilon>0$ there exists $\nu_\epsilon\in\mathcal M(\Sigma,\sigma)$ such that
%$h(\nu_\epsilon)>0$ and $F(\nu_\epsilon)>P(\phi)-\epsilon$. Let $\hat{\nu}_\epsilon$ denote the normalized restriction of $\nu_\epsilon$ to $\Lambda$. Since $\hat\sigma$ is the first return map to $\Sigma_0$ under the iteration of $\sigma$, $\hat\nu_\epsilon$ belongs to $\mathcal M(\Lambda,\hat {\sigma})$. If $\theta<0$ and $\epsilon\in(0,-\theta)$, then we have
%\[\begin{split}\hat{P}(\theta)&\geq \hat h(\hat{\nu}_\epsilon)+\int(\hat\phi-\theta)\tau d\hat{\nu}_\epsilon=
%\left(F(\nu_\epsilon)-\theta \right) \int \tau d\hat{\nu}_\epsilon>\left(-\epsilon-\theta \right) \int \tau d\hat{\nu}_\epsilon>0.\end{split}\]
%The variational principle in \cite[Theorem~2.1.8]{MauUrb03} implies that $\theta\mapsto \mathscr P(\theta)$ is convex and continuous on a neighborhood of $0$.
%We conclude that $\mathscr P(0)\geq0$, and therefore
%$\mathscr P(0)=0$. 
%From this and \eqref{AK}}
Since $P(\phi)=0$ by Lemma~\ref{exist-equi},
 $\mu_p$ is an equilibrium state for the potential $\phi$.
 
 \begin{lemma}\label{unique-equi}
In case (a) or (c) of Proposition~\ref{ex2-prop}, $\mu_p$ is an equilibrium state for the potential $\phi$. 
In case (a), it is the unique equilibrium state for the potential $\phi$. 
 \end{lemma}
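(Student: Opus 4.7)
The first claim---that $\mu_p$ is an equilibrium state for $\phi$---is essentially already proved: the computation preceding the lemma gave $F(\mu_p)=0$, and $P(\phi)=0$ by Lemma~\ref{exist-equi}. This reasoning is insensitive to whether $0$ is a neutral fixed point of $T_2$, so it covers cases (a) and (c) simultaneously. The content of the lemma is therefore the uniqueness assertion in case (a), which I would prove by dichotomizing any equilibrium state $\nu\in\mathcal M(\Sigma,\sigma)$ (so $F(\nu)=P(\phi)=0$) according to the sign of $\nu([3]\cup[4])$.

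If $\nu([3]\cup[4])>0$, I would induce $\nu$ to the normalized restriction $\hat\nu=\nu|_{[3]\cup[4]}/\nu([3]\cup[4])$. This $\hat\nu$ is $\hat\sigma$-invariant and supported on $\hat\Sigma$ by Poincar\'e recurrence, and Kac's formula gives $\int t\,d\hat\nu=\nu([3]\cup[4])^{-1}<\infty$. Abramov--Kac's identity---the very formula \eqref{AK}, applied now to $\nu$ in place of $\mu_p$---yields $h(\hat\nu)+\int\Phi\,d\hat\nu=F(\nu)/\nu([3]\cup[4])=0=P(\Phi)$. The pointwise bound $|\Phi|\leq t\cdot(\max_i|\log p_i|+\max_i\sup\log|T_i'|)$ noted in the proof of Lemma~\ref{finite} ensures $\int\Phi\,d\hat\nu$ is finite. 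The uniqueness half of \cite[Theorem~2.2.9]{MauUrb03} already invoked for $\hat\mu_p$ then forces $\hat\nu=\hat\mu_p$, and unwinding the suspension construction that produced $\mu_p$ from $\hat\mu_p$ gives $\nu=\mu_p$.

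If instead $\nu([3]\cup[4])=0$, then by $\sigma$-invariance and Poincar\'e recurrence $\nu$ is supported on $Y=\{\underline a\in\Sigma\colon a_n\in\{1,2\}\text{ for all }n\}$, which is conjugate via $\sigma$ to the full two-shift. By Lemma~\ref{u-decay}, the cylinders $J(a_1\cdots a_n)$ with $a_k\in\{1,2\}$ all contain the common fixed point $0$ and shrink to $\{0\}$, so $\pi$ sends $Y$ into $\Omega\times\{0\}$. Using $T_1'(0)=1$ from (B2), on $Y$ the random geometric potential simplifies to $\phi(\underline a)=\log p_{a_1}-\1_{\{a_1=2\}}\log|T_2'(0)|$. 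The distinguishing feature of case (a) is that $|T_2'(0)|>1$; write $\rho=|T_2'(0)|$. By the elementary variational principle on $\{1,2\}^{\mathbb N}$ (the pressure of the potential $\log p_{a_1}$ equals $\log(p_1+p_2)=0$, attained only at the $(p_1,p_2)$-Bernoulli measure),
\[ F(\nu)=h(\nu)+\nu([1])\log p_1+\nu([2])\log p_2-\nu([2])\log\rho\leq -\nu([2])\log\rho. \]
If $\nu$ is the $(p_1,p_2)$-Bernoulli measure the right-hand side equals $-p_2\log\rho<0$; otherwise the first inequality is strict. In both cases $F(\nu)<0$, contradicting $F(\nu)=0$. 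This rules out the second alternative and completes the proof.

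The main obstacle I anticipate is verifying cleanly that the Abramov--Kac/suspension correspondence is a bijection between $\sigma$-invariant probabilities on $\Sigma$ charging $[3]\cup[4]$ (with finite return-time expectation) and $\hat\sigma$-invariant probabilities on $\hat\Sigma$ (with finite $\int t$), in such a way that the equilibrium property transfers both ways. This is classical Rokhlin-tower material, but one must match the integrability hypotheses of \cite[Theorem~2.2.9]{MauUrb03} precisely against what Lemmas~\ref{holder}--\ref{finite} provide.
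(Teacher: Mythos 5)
Your proposal is correct and follows essentially the same route as the paper: the first assertion is exactly the computation $F(\mu_p)=0=P(\phi)$ preceding the lemma, and the uniqueness in case (a) is proved by the same dichotomy (measures charging the inducing set are handled via Abramov--Kac and the uniqueness of the equilibrium state for $\Phi$ from \cite[Theorem~2.2.9]{MauUrb03}; measures not charging it are supported on $\Sigma_0=\{1,2\}^{\mathbb N}$ and shown to have strictly negative free energy). The only differences are cosmetic: the paper obtains negativity on $\Sigma_0$ by the cruder subsystem-pressure bound $\log(p_1+p_2\rho^{-1})<0$ rather than your exact identification of $\phi$ on the coded set $\Omega\times\{0\}$, and it avoids your implicit use of Kac's formula for possibly non-ergodic $\nu$ by restricting from the outset to ergodic equilibrium states (the reduction being the standard ergodic decomposition), a point you should make explicit since $\int t\,d\hat\nu=\nu([3]\cup[4])^{-1}$ can fail when part of the mass of $\nu$ never visits $[3]\cup[4]$.
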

 %This can be shown in two steps. First, we show that any measure other than $\mu$ which gives positive weight to $\Lambda$ is not an equilibrium state for the potential $\phi$. Second,
 %Measures which do not give positive positive weight to $\Lambda$
 \begin{proof} Let
 $\nu\in\mathcal M(\Sigma,\sigma)$ be an ergodic equilibrium state with $\nu(\hat\Sigma)>0$.
  The normalized restriction of $\nu$ to $\hat\Sigma$, denoted by $\hat\nu$, belongs
 to $\mathcal M(\hat\Sigma,\hat {\sigma}|_{\hat\Sigma})$.
 From $P(\phi)=0$,
 Abramov-Kac's formula and
 $P(\Phi)=0$,
 $\hat\nu$
 is an equilibrium state for the potential $\Phi$,
 namely $\hat\mu_p=\hat\nu$, and so $\mu_p=\nu$.

% Moreover, the only measure in $\mathcal M(\Sigma,\sigma)$ which does not give positive weight to $\hat\Sigma$ is the unit point mass
% at $1^\infty=111\cdots$. Since $h(\delta_{1^\infty})=0$ and  $|T_1'(1^\infty)|=1$, we have
% $F(\delta_{1^\infty})=\log p_1<0.$ Hence
% the assertion holds.
 
 In case (a),
 let $\Sigma_0=\{1,2\}^\mathbb N\subset\Sigma$.
 Let $\mathcal M(\Sigma_0,\sigma)$
 denote the set of elements of $\mathcal M(\Sigma,\sigma)$ which are supported on $\Sigma_0$. 
  Since $\bigcup_{n=0}\sigma^{-n}\Sigma_0$ is contained in the complement of $\hat\Sigma$,
 any measure in $\mathcal M(\Sigma,\sigma)$ which does not give positive weight to $\hat\Sigma$ is supported on $\Sigma_0$.
 The variational principle for the  subsystem $(\Sigma_0,\phi|_{\Sigma_0})$ gives
 \[\begin{split}\sup_{\nu\in\mathcal M(\Sigma_0,\sigma)}F(\nu)&= \lim_{n\to\infty}\frac{1}{n}\log\sum_{a_1\cdots a_n\in\{1,2\}^n}\sup_{a_1\cdots a_n}\exp\phi\\
 &\leq\log\left(\sup_{[11]\cup[12]}\exp\phi+\sup_{[21]\cup[22]}\exp\phi\right)\leq
 \log\zeta<0.\end{split}\]
 $\mathcal M(\Sigma_0,\sigma)$ does not contain an equilibrium state for the potential $\phi$.
 \end{proof}

% We first show that the induced map has an invariant measure that is absolutely continuous with respect to
%$m_p\times{\rm Leb}$ whose density is independent of $\omega$. 
In case (a) or (c) of Proposition~\ref{ex2-prop}, let $m$ denote the normalized restriction of $(m_p\otimes{\rm Leb})\circ\pi$ to $\hat\Sigma$.
%We introduce a Perron-Frobenius operator $P\colon L^1(m)\to L^1(m)$ by
%\[Pu(x)=\sum_{\hat\sigma(y)=x} e^{\hat\phi(y)}u(y).\]
By the Markov structure of $\hat\sigma|_{\hat\Sigma}$
and the distortion estimate in Lemma~\ref{holder}, we may apply the standard argument 
%based on Arzel\'a-Ascoli's theorem 
(see e.g., \cite[Chapter~V, Section~2]{dMevSt93}) to the sequence
$((1/n)\sum_{k=0}^{n-1}m\circ(\hat\sigma|_{\hat\Sigma})^{-k})_{n=1}^\infty$ to obtain
 a convergent subsequence in the weak* topology.
This limit measure is $\hat\sigma|_{\hat\Sigma}$-invariant, and absolutely continuous with respect to $m$, with a bounded uniformly positive density
\cite[Chapter~V, Theorem~2.2]{dMevSt93}.
Hence it satisfies the Gibbs property \eqref{gibbs}.
By the uniqueness of Gibbs state \cite[Theorem~2.2.4]{MauUrb03}, this limit measure is %nothing but 
$\hat\mu_p$.
%Since $\sigma$ is non-singular with respect to $(m_p\times{\rm Leb}_{[0,1]})\circ\pi$, $\mu$ is absolutely continuous with respect to $(m_p\times{\rm Leb}_{[0,1]})\circ\pi$.

It follows that
$\mu_p\circ\pi^{-1}$
is absolutely continuous with respect to $m_p\otimes{\rm Leb},$ and
by \cite[Corollary~3.1]{Mor88}, the density is independent of $\omega.$
The measure $\lambda_p=(\mu_p\circ\pi^{-1})\circ\Pi^{-1}$ on $X$ is a stationary measure that is absolutely continuous  with respect to ${\rm Leb}$ and satisfies %$\mu\circ\pi^{-1}=m_p\times\lambda_p$, namely
$(m_p\otimes\lambda_p)\circ\pi=\mu_p.$
This together with Lemma~\ref{unique-equi}
completes the proof of Proposition~\ref{ex2-prop}(a)(c).

%\begin{lemma}\label{fundamental}
%Let $f\colon[0,1]\to\mathbb R$ be a $C^2$ function such that $f(0)=0$, $f'(0)=1$, $f''>0$ on $[0,1]$.
%Let $(x_n)_{n=0}^\infty$ be such that $x_0=1$ and $f(x_n)=x_{n-1}$ for all $n\geq1$.
%Then \[
%\lim_{n\to\infty}\frac{x_{n-1}-x_{n}}{(1/2)f''(0)x_n^2}=1\quad\text{and}\quad\lim_{n\to\infty}nx_n=1.\]
%\end{lemma}
%\begin{proof}Taylor's formula for $f$ at $x=0$ gives $f(x)=x+(1/2)f''(0)x^2+O(x^3)$. Since $\lim_{n\to\infty}x_n=0$, substituting $x=x_{n}$, dividing by $(1/2)f''(0)x_n^2$ and then letting $n\to\infty$ yields the first formula. The second one is from \cite[Lemma~2, Corollary]{Tha83}.\end{proof}

In case (b) of Proposition~\ref{ex2-prop},
suppose
 $\nu\in\mathcal M(\Sigma,\sigma)\setminus\{(m_p\otimes\delta_0)\circ\pi\}$ is an ergodic equilibrium state for the potential $\phi$. Then $\nu(\hat\Sigma)>0$.
  The normalized restriction of $\nu$ to $\hat\Sigma$, denoted by $\hat\nu$, belongs
 to $\mathcal M(\hat\Sigma,\hat {\sigma}|_{\hat\Sigma})$
 and satisfies $\int td\hat\nu<\infty$.
 From $P(\phi)=0$,
 Abramov-Kac's formula and
 $P(\Phi)=0$,
 $\hat\nu$
 is an equilibrium state for the potential $\Phi$.
 By \cite[Theorem~2.2.9]{MauUrb03} and \cite[Corollary~2.7.5]{MauUrb03},
 $\hat\nu$ is a Gibbs state, and so $\hat\mu_p=\hat\nu$. This yields
 a contradiction
 to Lemma~\ref{finite}, completing
 the proof of Proposition~\ref{ex2-prop}(b).
 \end{proof}

\subsection{Almost-sure weighted equidistribution along subsequences}\label{conv-sub-sec}
In the case the uniqueness of equilibrium state does not hold
as in Proposition~\ref{ex2-prop}(c), passing to convergent subsequences we maintain a weighted equidistribution of random cycles in the following sense.
\begin{prop}\label{conv-sub}
Let $T_1,\ldots,T_N$ be non-uniformly expanding Markov maps on $X$ generating a nice, topologically mixing skew product Markov map.
  Let $p$ be an $N$-dimensional positive probability vector for which there exist an integer $\ell\geq2$ and stationary measures $\lambda_{p,1},\ldots,\lambda_{p,\ell}$ such that
  ergodic equilibrium states for the random geometric potential $\phi=\phi_p$ are precisely $(m_p\otimes\lambda_{p,i})\circ\pi$, $i=1,\ldots,\ell$.
Then, for $m_p$-almost every sample $\omega\in\Omega$,
any accumulation point of the sequence
$(\xi_{n}^\omega)_{n=1}^\infty$ in the weak* topology is a convex combination of $\lambda_{p,1},\ldots,\lambda_{p,\ell}$.
\end{prop}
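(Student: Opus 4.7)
The plan is to follow the proof of Theorem~A through Proposition~\ref{abstract}, replacing the uniqueness of equilibrium state by the given description of the simplex of ergodic equilibrium states. The samplewise level-2 upper bound of Proposition~\ref{ldpup-q} was derived without invoking uniqueness, so it continues to apply. The first step is to identify the zero set of $I_{\Lambda}$. Since $P(\phi)=0$ by Lemma~\ref{exist-equi}, the variational principle \eqref{VP} shows that $I_\Sigma^{-1}(0)$ is exactly the set of equilibrium states for $\phi$. The free energy $F$ is affine on $\mathcal M(\Sigma,\sigma)$, so its maximizers form a face; consequently the ergodic decomposition of any equilibrium state involves only ergodic equilibrium states, and by hypothesis these must be among $(m_p\otimes\lambda_{p,i})\circ\pi$, $i=1,\ldots,\ell$. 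Therefore
\[
I_\Sigma^{-1}(0)={\rm conv}\{(m_p\otimes\lambda_{p,i})\circ\pi\colon i=1,\ldots,\ell\}.
\]
Pushing forward by $\pi$ (which, being bijective outside a null set, sends each $(m_p\otimes\lambda_{p,i})\circ\pi$ to $m_p\otimes\lambda_{p,i}$ and preserves convex combinations) yields $I_{\Lambda}^{-1}(0)={\rm conv}\{m_p\otimes\lambda_{p,i}\colon i=1,\ldots,\ell\}$, a compact convex subset of $\mathcal M(\Lambda)$.

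Next, fix $\omega$ for which the conclusion of Proposition~\ref{ldpup-q} holds and let $\tilde\mu^\omega$ be an arbitrary weak* accumulation point of $(\tilde\mu_{n}^\omega)_{n=1}^\infty$. Repeating the argument in the proof of Proposition~\ref{abstract} verbatim, for any $\nu\in\mathcal M(\Lambda)\setminus I_\Lambda^{-1}(0)$ one has $I_\Lambda(\nu)>0$, and choosing a closed ball $\mathcal B_r(\nu)$ disjoint from the closed sublevel set $\{\mu\colon I_\Lambda(\mu)\leq I_\Lambda(\nu)/2\}$, Proposition~\ref{ldpup-q} forces $\tilde\mu^\omega({\rm int}\,\mathcal B_r(\nu))=0$. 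Hence the support of $\tilde\mu^\omega$ is contained in $I_\Lambda^{-1}(0)$.

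It remains to descend from $\mathcal M(\mathcal M(\Lambda))$ to $\mathcal M(X)$. By continuity of $\Pi_{**}$, any accumulation point of $(\tilde\xi_{n}^\omega)$ takes the form $\Pi_{**}\tilde\mu^\omega$ for some $\tilde\mu^\omega$ as above, so it is supported on $\Pi_*(I_\Lambda^{-1}(0))={\rm conv}\{\lambda_{p,1},\ldots,\lambda_{p,\ell}\}$. A direct computation from \eqref{random-c} and \eqref{measure-xi} shows that $\xi_{n}^\omega$ is the barycenter of $\tilde\xi_{n}^\omega$, namely $\int\varphi\,d\xi_n^\omega=\int\bigl(\int\varphi\,d\mu\bigr)d\tilde\xi_n^\omega(\mu)$ for every continuous $\varphi\colon X\to\mathbb R$. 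The barycenter map $\mathcal M(\mathcal M(X))\to\mathcal M(X)$ is weak* continuous, so any accumulation point $\xi^\omega$ of $(\xi_{n}^\omega)$ is the barycenter of some accumulation point $\tilde\xi^\omega$ of $(\tilde\xi_n^\omega)$. Since the barycenter of a Borel probability supported on a compact convex subset of $\mathcal M(X)$ lies in that subset (tested against continuous affine functionals), $\xi^\omega$ is a convex combination of $\lambda_{p,1},\ldots,\lambda_{p,\ell}$, as required. The main technical point is the identification $I_\Sigma^{-1}(0)={\rm conv}\{(m_p\otimes\lambda_{p,i})\circ\pi\}$; once this is in place the rest is a routine adaptation of the proof of Theorem~A, with the barycenter step handling the passage from level-2 to level-1.
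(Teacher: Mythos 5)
Your proposal is correct and follows essentially the same route as the paper: identify the zero set of the rate function as the convex hull of the $\ell$ product measures, use the samplewise level-2 upper bound (which does not need uniqueness) to confine the support of any accumulation point of $(\tilde\mu_n^\omega)$ to that set, and then pass to $\mathcal M(X)$ via the barycenter map, whose image of measures supported on a compact convex set stays in that set. The only cosmetic difference is the order of operations (you apply $\Pi_{**}$ before taking barycenters, the paper applies the barycenter map $\Gamma$ on $\mathcal M(\mathcal M(\Lambda))$ and then $\Pi_*$), and you supply more detail than the paper on why the zero set is exactly the convex hull.
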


\begin{proof}
 Let $\mathcal K=\{\mu\in\mathcal M(\Lambda)\colon I_\Lambda(\mu)=0\}$.
 By the lower semicontinuity of $I_\Lambda$, $\mathcal K$ is a closed subset of $\mathcal M(\Lambda)$.
 The assumption of Proposition~\ref{conv-sub} implies
 \[\mathcal K=\{m_p\otimes(\rho_1\lambda_{p,1}+\cdots+\rho_\ell\lambda_{p,\ell})\colon\rho_1,\ldots,\rho_\ell\in[0,1],\rho_1+\cdots+\rho_\ell=1\}.\]
Note that $\Pi_*(\mathcal K)$ is the set of convex combinations of 
$\lambda_{p,1},\ldots,\lambda_{p,\ell}$.
   Let $\mathcal M(\mathcal K)$ 
denote the set of elements of
$\mathcal M(\mathcal M(\Lambda))$ whose supports are contained in $\mathcal K.$ 
Define a projection
 $\Gamma\colon \tilde\mu\in\mathcal M(\mathcal M(\Lambda))\to \Gamma(\tilde\mu)\in\mathcal M(\Lambda)$ by
 \[\int_{\mathcal M(\Lambda)}
 \left(\int \varphi d\mu\right) d\tilde\mu(\mu)=\int \varphi d\Gamma(\tilde\mu)\quad\text{for any continuous }\varphi\colon\Lambda\to\mathbb R.\]
The left-hand side is a normalized non-negative linear functional on the space of continuous real-valued functions on $\Lambda$, and so
$\Gamma$ is well-defined by Riesz's representation theorem.
It is clear that $\Gamma$ is continuous.
For $\mu\in\mathcal M(\Lambda)$, let $\delta_\mu\in\mathcal M(\mathcal M(\Lambda))$ denote the unit point mass at $\mu$.
For $\mu,\nu\in\mathcal M(\Lambda)$ and $\rho\in[0,1]$ we have
 $\Gamma((1-\rho)\delta_\mu+\rho\delta_\nu)=(1-\rho)\mu+\rho\nu$, which
 implies $\Pi_*\circ\Gamma(\tilde\mu^\omega_n)=\xi^\omega_n$.

Let $\omega\in\Omega$, and let $(\xi_{n_j}^\omega)_{j=1}^\infty$ be a convergent subsequence of $(\xi_{n}^\omega)_{n=1}^\infty$. Taking a further subsequence if necessary we may assume
 $(\tilde{\mu}_{n_j}^\omega)_{j=1}^\infty$ converges to the limit measure $\tilde\mu^\omega$. 
 The continuity of $\Gamma$ shows
  $\Gamma(\tilde\mu^\omega_{n_j})\to\Gamma(\tilde\mu^\omega)$ in the weak* topology as $j\to\infty$.
The argument in the proof of Proposition~\ref{abstract} shows that  $\tilde\mu^\omega\in\mathcal M(\mathcal K)$ holds almost surely.
 As in \cite[Lemma~2.7]{T},   $\Gamma(\mathcal M(\mathcal K))\subset\mathcal K$ holds and therefore
 $\Gamma(\tilde\mu^\omega)\in\mathcal K$. We obtain
 $\xi^\omega_{n_j}=\Pi_*\circ\Gamma(\tilde\mu^\omega_{n_j})\to \Pi_*\circ\Gamma(\tilde\mu^\omega)\in\Pi_*(\mathcal K)$ in the weak* topology as $j\to\infty$,
 which completes the proof.
\end{proof}

\subsection{L-S-V maps}\label{@}
Liverani, Saussol and Vaienti \cite{LSV99} introduced a one-parameter family  $L_\alpha\colon[0,1]\to[0,1]$ $(\alpha>0)$ of maps given by
\[L_\alpha(x)=\begin{cases}x(1+2^\alpha x^\alpha)& x\in\left[0,\frac{1}{2}\right),\\
2x-1&x\in\left[\frac{1}{2},1\right],\end{cases}\]
now called the L-S-V maps after them.
This map has $0$ as a common neutral fixed point.
%and
%has an invariant measure $\mu_\alpha$ that is absolutely continuous with respect to the Lebesgue measure. 
If $\alpha<1$, %$\mu_\alpha$ is finite and
$L_\alpha$ has negative Schwarzian derivative and
Lebesgue almost every orbit is asymptotically distributed with respect to an invariant probability measure that is absolutely continuous with respect
to the Lebesgue measure.
If $\alpha\geq1$, %$\mu_\alpha$ is infinite and 
Lebesgue almost every orbit is asymptotically distributed with respect to the unit point mass at $0$.
An interaction of these two compelling behaviors in random setup has attracted attention of researchers.  Statistical properties of random compositions of L-S-V maps with parameters chosen from a fixed compact interval 
according to a fixed distribution were investigated in \cite{BBD14,BBD16,BBR19,BQT21,Gou07}.

Let us consider an i.i.d. random dynamical system generated by finitely many L-S-V maps
$L_{\alpha_1},L_{\alpha_2},\ldots,L_{\alpha_N}$ with
$\alpha_1<\alpha_2<\cdots<\alpha_N$.
The unit point mass
 at $0$ is a stationary measure, and the corresponding measure on the shift space with $2N$ symbols is an equilibrium state for the random geometric potential $\phi$. If $\omega_N<1$, as in Proposition~\ref{ex2-prop}(c) there is another equilibrium state for $\phi$ that corresponds to the stationary measure absolutely continuous with respect to the Lebesgue measure. In particular, (A4) does not hold.  
As in Proposition~\ref{conv-sub}, any accumulation point of the sequence $(\xi_n^\omega)_{n=1}^\infty$ 
 is a convex combination of these two stationary measures, almost surely. 
  In the case $\omega_1\geq1$, one can verify 
  a version of Proposition~\ref{ex2-prop}(b)
  using the distortion technique in \cite[Corollary~3.3]{BQT21}.

\subsection{On extensions of the main results}\label{extension}
Although we have suppressed the setup of our main results to the minimal complexity,
some further extensions can be considered in view of recent advances
in the field of random dynamical systems.
First of all,
it is relevant to weaken the i.i.d. setting to a weakly dependent random noise: to weaken the independence of the driving process
$\theta\colon\Omega\to\Omega$ to a mixing condition satisfied for example by suitable stationary/non-stationary 
Markov chains other than Bernoulli
\cite{ANV15, BoDo99, Kif94}.
Also relevant is to consider
extensions to random dynamical systems expanding on average having
a contracting part \cite{ANV15,Pel84}.
%: consider maps that have nice properties on average but not uniformly. 
%For example, some of the maps could contract locally, and distortions could be unbounded. 
%It is completely relevant to consider i.i.d. random dynamical systems
%generated by multidimensional maps.
Considering random dynamical systems generated by 
uncountably many maps is completely relevant from the viewpoint of structural stability and bifurcation theory.
%but for now no result is within our reach. 

%As anticipated, 
Our arguments and results can be easily generalized
to treat distributions of {\it random preimages}. Under the assumption in Theorem~A, fix a point $x_0\in{\rm int}(X)$, and for each $\omega\in\Omega$ consider a Borel probability measure on $\mathcal M(X)$ given by 
 \[\frac{1}{Z_{\omega,n}'}\sum_{x\in{\rm Pre}(T_\omega^n,x_0)} |(T^n_\omega)'x|^{-1}\delta_{\delta_x^{\omega,n}}\quad(n=1,2,\ldots),\]
 where 
  ${\rm Pre}(T_\omega^n,x_0)=\{x\in X\colon T_\omega^n(x)=x_0\}$ and
  $Z_{\omega,n}'$ denotes the normalizing constant.
Slightly modifying the proof of Theorem~A, one can show that this sequence converges in the weak* topology to
 $\delta_{\lambda_p}$ for $m_p$-almost every $\omega$.

\subsection*{Acknowledgments}
We thank Juho Lepp\"anen and Takehiko Morita for fruitful discussions.
SS was supported by the JSPS KAKENHI 20K14331.
HT was supported by the JSPS KAKENHI 
19K21835 and 20H01811.

\end{document}